\documentclass{amsart}
\usepackage{amsmath}
\usepackage{amssymb}
\usepackage{color}
\input xy
\xyoption{all}

\theoremstyle{definition}
\newtheorem{theorem}{Theorem}[section]
\newtheorem{lemma}[theorem]{Lemma}

\theoremstyle{definition}
\newtheorem{definition}[theorem]{Definition}

\newtheorem{c-example}[theorem]{Counter-example}
\newtheorem{Lemma}[theorem]{Lemma}
\newtheorem{corollary}[theorem]{Corollary}
\newtheorem{Prop}[theorem]{Proposition}
\theoremstyle{remark}
\newtheorem{remark}[theorem]{Remark}
\newtheorem{examples}[theorem]{Examples}
\numberwithin{equation}{section}

\usepackage{hyperref}
\usepackage{cancel}
\usepackage[normalem]{ulem}


\newcommand{\Cal}[1]{{\mathcal #1}}
\newcommand{\paral}[1]{\ar@<0.3ex>[#1] \ar@<-0.3ex>[#1]}

\newcommand{\PreOrd}[1]{\mathsf{PreOrd}(\mathbb{#1})}
\newcommand{\ParOrd}[1]{\mathsf{ParOrd}(\mathbb{#1})}
\newcommand{\Equiv}[1]{\mathsf{Eq}(\mathbb{#1})}
\newcommand{\Disc}[1]{\mathsf{Dis}(\mathbb{#1})}

\newcommand{\Stab}{\mathbb{S}}

\newcommand{\Id}[1]{\operatorname{Id}_{#1}}

\DeclareMathOperator{\Sub}{\mathsf{Sub}}
\DeclareMathOperator{\Hom}{hom}
\DeclareMathOperator{\Eq}{Eq}

\newdir{ >}{{}*!/-7pt/@{>}}

\begin{document}

 \title{The stable category of preorders in a pretopos I: general theory}
  
  \author{Francis Borceux}
\address{Universit\'e catholique de Louvain, Institut de Recherche en Math\'ematique et Physique, 1348 Louvain-la-Neuve, Belgium}
\email{francis.borceux@uclouvain.be}

\author{Federico Campanini}
\address{Universit\`a degli Studi di Padova, Dipartimento di Matematica ``Tullio Levi-Civita'', 35121 Padova, Italy}
\email{federico.campanini@unipd.it}

\author{Marino Gran}
\address{Universit\'e catholique de Louvain, Institut de Recherche en Math\'ematique et Physique, 1348 Louvain-la-Neuve, Belgium}
\email{marino.gran@uclouvain.be}

\thanks{
The second author was partially supported by Fondazione Ing. Aldo Gini - Universit\`a di Padova, borsa di studio per l'estero bando anno 2019 and by Ministero dell'Istruzione, dell'Universit\`a e della Ricerca (Progetto di ricerca di rilevante interesse nazionale ``Categories, Algebras: Ring-Theoretical and Homological Approaches (CARTHA)'').\\
This work was also supported by the collaboration project \emph{Fonds d'Appui à l'Internationalisation} ``Coimbra Group'' (2018-2021) funded by the Universit\'e catholique de Louvain.}

\makeatletter
\@namedef{subjclassname@2020}{\textup{2020} Mathematics Subject Classification}
\makeatother

\subjclass[2020]{Primary 06A75, 18B25, 18B50, 18B35, 18E08, 18E40.}
\keywords{Internal preorders, partial orders, equivalence relations, coherent category, pretopos, stable category,  pretorsion theory.}

\begin{abstract} 
In a recent article Facchini and Finocchiaro considered a natural pretorsion theory in the category of preordered sets inducing a corresponding stable category. In the present work we propose an alternative construction of the stable category of the category $\PreOrd C$ of internal preorders in any coherent category $\mathbb C$, that enlightens the categorical nature of this notion.
When $\mathbb C$ is a pretopos we prove that the quotient functor from the category of internal preorders to the associated stable category preserves finite coproducts. Furthermore, we identify a wide class of pretoposes, including all $\sigma$-pretoposes and all elementary toposes, with the property that
this functor sends any short $\mathcal Z$-exact sequences in $\PreOrd C$ (where $\mathcal Z$ is a suitable ideal of trivial morphisms) to a short exact sequence in the stable category. These properties will play a fundamental role in proving the universal property of the stable category, that will be the subject of a second article on this topic.
\end{abstract}

\maketitle

\section*{Introduction}
In a recent article \cite{FF} Facchini and Finocchiaro observed that in the category $\mathsf{PreOrd}$ of preordered sets there is a natural \emph{pretorsion theory} $(\mathcal T, \mathcal F) = ( \mathsf{Eq}, \mathsf{ParOrd})$, where $\mathsf{Eq}$ is the ``torsion subcategory'' of equivalence relations and $\mathsf{ParOrd}$ the ``torsion-free'' subcategory of partial orders. Let us write $\mathcal Z = \mathsf{Eq} \cap \mathsf{ParOrd}$ for the full subcategory of $\mathsf{PreOrd}$ whose objects are discrete equivalence relations, and call $\mathcal Z$-trivial a morphism in $\mathsf{PreOrd}$ that factors through an object in $\mathcal Z$. Then the fact that $( \mathsf{Eq}, \mathsf{ParOrd})$ is a pretorsion theory can be expressed as follows:
\begin{enumerate}
\item any morphism $f \colon (X, \tau) \rightarrow (Y, \sigma)$ from an equivalence relation $(X, \tau)$ to a partial order $(Y, \sigma)$ is $\mathcal Z$-trivial; 
\item for any preorder $(A, \rho)$ there is a canonical short $\mathcal Z$-exact sequence
$$
\xymatrix{
(A, \sim_{\rho}) \ar[r]^{\Id A} & (A,\rho)\ar[r]^-\pi & ({A}/{\sim_{\rho}}, \pi(\rho))\\
}
$$
where $\sim_\rho=\rho\cap \rho^\circ$ is the equivalence relation given by the intersection of $\rho$ with its opposite relation $\rho^\circ$, $\pi\colon A \to A/{\sim_\rho}$ is the canonical quotient and $\pi(\rho)$ is the partial order induced on $A/{\sim_\rho}$ by $\rho$.
\end{enumerate}
Note that a short $\mathcal Z$-exact sequence is defined similarly to the usual notion of exact sequence in a pointed category, with the $\mathcal Z$-trivial morphisms having the same role as zero morphisms in the classical pointed setting.
The pair $( \mathsf{Eq}, \mathsf{ParOrd})$ was the first example of the general notion of pretorsion theory introduced in \cite{FF} and thoroughly investigated in \cite{FFG, FFG2}. This notion is a wide generalization of the classical notion of torsion theory for abelian categories due to Dickson \cite{D}, that was later extended to other non-additive contexts by several authors (see for instance \cite{BG, CDT, GJ, JT} and the references therein).

An interesting observation in \cite{FF} is the following: even though $\mathsf{PreOrd}$ is not pointed, it is possible to naturally associate with it a pointed quotient category ${\mathbb S}^*$, called the \emph{stable category}. Via the corresponding quotient functor, two parallel morphisms $f$ and $g$ between (non-empty) preordered sets are identified in ${\mathbb S}^*$ when they coincide on a ``clopen subobject'' of their domain (see Section \ref{Section_clopen}) and they are $\mathcal Z$-trivial (i.e. they factor through a discrete equivalence relation) on the complement of this subobject. This quotient functor $\Sigma$ is shown to send all ``trivial objects'' in $\mathcal Z$ to the zero object of the pointed category ${\mathbb S}^*$. Furthermore, this functor has some interesting properties: it preserves finite coproducts and, more importantly, it sends short $\mathcal Z$-exact sequences to ``genuine'' exact sequences in ${\mathbb S}^*$. 

In the present work, we extend the results in \cite{FF} to the category $\mathsf{PreOrd}(\mathbb C)$ of internal preorders in a coherent category $\mathbb C$ \cite{Elep}, hence including a variety of new examples.
We first extend the construction of the stable category by Facchini and Finocchiaro to the context of coherent categories in Section \ref{Section_stable_category}, where we restrict our attention to the objects of $\mathsf{PreOrd}(\mathbb C)$ with ``global element''. 
We then give a new construction of the stable category as a suitable quotient of a \emph{category of partial morphisms}, that keeps the same objects as the ones in $\mathsf{PreOrd}(\mathbb C)$, thus avoiding the restriction of considering only the objects having a global element (Section \ref{new-def}). Of course, the ``new'' stable category, denoted by $\mathsf{Stab}(\mathbb C)$, coincides with ${\mathbb S}^*$ if we restrict ourselves to the preorders having a global element.

In order to establish the main properties of the quotient functor $\Sigma \colon \mathsf{PreOrd}(\mathbb C) \rightarrow  \mathsf{Stab}(\mathbb C)$ we then assume $\mathbb C$ to be a \emph{pretopos} (from Section \ref{Section_pretoposes} on). In this context we prove that the functor $\Sigma \colon \mathsf{PreOrd}(\mathbb C) \rightarrow  \mathsf{Stab}(\mathbb C)$ preserves finite coproducts (Proposition \ref{coproducts-preserved}). Whereas the existence of $\mathcal Z$-kernels is always guaranteed, to prove that $\mathsf{PreOrd}(\mathbb C)$ has $\mathcal Z$-cokernels we need an additional assumption on the base category $\mathbb C$. In view of Proposition \ref{existence-cokernels} it is natural to assume that $\mathbb C$ is a $\tau$-pretopos, i.e. a pretopos with the property that the transitive closure of any relation on a given object exists. It is well known that any $\sigma$-pretopos has this property \cite{Elep}, however there are interesting examples of $\tau$-pretoposes which are not $\sigma$-pretoposes, such as the category of compact Hausdorff spaces (Example \ref{HComp}). We then prove that, under this assumption, the functor $\Sigma \colon \mathsf{PreOrd}(\mathbb C) \rightarrow  \mathsf{Stab}(\mathbb C)$ sends any short $\mathcal Z$-exact sequence to a short exact sequence in $ \mathsf{Stab} (\mathbb C)$ (Theorem \ref{main-theo}). 

This crucial property will be used in the second article of this series, where a universal property of the stable category $\mathsf{Stab}(\mathbb C)$ will be established. Roughly speaking, this property will express the fact that the stable category provides the ``best possible torsion theory'' one can associate with the pretorsion theory $(\mathsf{Eq}(\mathbb C), \mathsf{ParOrd}(\mathbb C))$ in $\mathsf{PreOrd}(\mathbb C)$.

{{\bf Acknowledgement.} The authors would like to thank Vasileios Aravantinos-Sotiropoulos and the anonymous referee for many useful suggestions they made on a preliminary version of the article.}

\section{Preliminary notions}

Recall that an arrow in a category $\mathbb{C}$ is 
a regular epimorphism when it is the coequalizer of two arrows in $\mathbb{C}$.
A finitely complete category $\mathbb{C}$ is \emph{regular} \cite{Barr} if
\begin{enumerate}
\item any arrow $f \colon X \rightarrow Y$ in $\mathbb{C}$ has a factorization $f= m \circ q$, with $q$ a regular epimorphism and $m$ a monomorphism; 
\item regular epimorphisms are stable under pullbacks in $\mathbb C$.
\end{enumerate}
If $X$ is an object in $\mathbb{C}$, we write $\mathsf{Sub}(X)$ for the category whose objects are the \emph{subobjects} of $X$. As usual, these are defined as equivalence classes of monomorphisms with the same codomain $X$.
A \emph{coherent category} is a regular category in which every $\mathsf{Sub}(X)$ has finite unions and, for any $f \colon X \rightarrow Y$, each pullback functor $f^* \colon \mathsf{Sub}(Y) \rightarrow \mathsf{Sub}(X)$ preserves them \cite{Elep}. 
Any coherent category has an initial object, denoted by $0$, which is \emph{strict}: any morphism with codomain $0$ is an isomorphism.
The initial object $0$ is the domain of the smallest subobject of the terminal object $1$.
In a coherent category $\mathbb C$ the distributive law
$$
A\cap(B\cup C)\cong (A\cap B)\cup (A\cap C)
$$
holds for any $A,B,C\in \mathsf{Sub}(X)$ \cite[Lemma~1.4.2]{Elep}.
Moreover, any commutative square in $\mathbb C$ of the form
$$
\xymatrix{
A\cap B \ar[r]\ar[d] & B \ar[d]\\
A\ar[r] & A\cup B \\
}
$$
is both a pullback and a pushout. This implies in particular that if $A$ and $B$ are disjoint subobjects of $X$, i.e. $A\cap B\cong 0$, then $A\cup B$ is the coproduct $A\coprod B$ of $A$ and $B$ in $\mathbb{C}$.
If we denote by $p_1 \colon X\times Y\rightarrow X$ the first projection of the product $X \times Y$, the pullback functor ${p_1^*}: \mathsf{Sub}(X)\to \mathsf{Sub}(X\times Y)$ preserves unions, therefore we have that $$(A\cup B) \times Y \cong (A\times Y) \cup (B\times Y)$$
for any subobjects $ x \colon A \rightarrow X$ and $y \colon B \rightarrow X$ in $\mathsf{Sub}(X)$, since both the squares 
$$
\xymatrix{
A \times Y \ar[r] \ar[d]_{x \times 1_Y} & A \ar[d]^x \ar@{}[drr]|{\mbox{and}}& & B \times Y \ar[r] \ar[d]_{y \times 1_Y} & B \ar[d]^y \\
X \times Y \ar[r]_-{p_1} & X & & X \times Y \ar[r]_-{p_1} & X \\
}
$$
are pullbacks.

\begin{examples}
Any pretopos, and in particular any topos, is a coherent category. The category $\mathsf{CHaus}$ of compact Hausdorff spaces and continuous maps is (coherent and is) a pretopos. Its full subcategory $\mathsf{Stone}$ of Stone spaces (i.e. totally disconnected compact Hausdorff spaces) is coherent as well, but is not a pretopos, since it is not exact in the sense of \cite{Barr}.
Any distributive lattice, seen as a preorder, is a coherent category. In general, given a coherent category $\mathbb C$, any functor category $[\mathbb D, \mathbb C]$ is again coherent, as well as every localization of such a functor category (see \cite{Elep, MR} for more examples).
\end{examples}


A subobject $A$ of an object $X$ in a coherent category is \emph{complemented} if there exists another subobject of $X$, denoted by $A^c$, such that $A \cap A^c = 0$ and $A \cup A^c = X$. The complement $A^c$ of a subobject $A$ is unique, if it exists. Moreover, we have
$$
X\times X= (A \coprod A^c) \times (A \coprod A^c)  \cong(A\times A)\coprod(A\times A^c)\coprod(A^c\times A)\coprod(A^c\times A^c)
$$
as it immediately follows from the properties of a coherent category recalled above. 
\vspace{3mm}

\begin{remark}\label{complementi}
Notice that if $X$ is an object of a coherent category $\mathbb C$, then $\Sub (X)$ is a bounded distributive lattice, with join and meet operations given by unions and intersections respectively (cf. \cite[Section~A1.4]{Elep}). In particular, if $B,C \in \Sub(X)$ are complemented subobjects of $X$, then $B\cup C$ and $B\cap C$ are still complemented subobjects of $X$ and the following isomorphisms hold:
$$
(B\cup C)^c\cong B^c\cap C^c \qquad (B\cap C)^c\cong B^c\cup C^c.
$$
Moreover, it is clear that for any morphism $f\colon Y\to X$ in $\mathbb C$, the pullback functor $f^*\colon \Sub(X)\to \Sub(Y)$ preserves all existing complements.
\end{remark}

\noindent {\bf Convention.} In this section $\mathbb C$ will always denote a \emph{coherent category}.
\vspace{3mm}

Let us now recall the definition of the category $\mathsf{PreOrd}(\mathbb C)$ of (internal) preorders in $\mathbb C$. An object $(A, \rho)$ in $\mathsf{PreOrd}(\mathbb C)$ is a relation $\langle r_1,r_2 \rangle \colon \rho \rightarrow A \times A$ on $A$, i.e. a subobject of $A \times A$, that is \emph{reflexive}, i.e. it contains the ``discrete relation'' $\langle 1_A,1_A \rangle \colon A \rightarrow A \times A$ on $A$, usually denoted by $\Delta_A$ -
and \emph{transitive}: there is a
morphism $\tau \colon \rho \times_A \rho \rightarrow \rho$ such that $r_1 \tau =  r_1 p_1$ and $r_2 \tau =  r_2 p_2$, where $\rho \times_A \rho$ is the ``object part'' of the pullback
$$
\xymatrix{ \rho \times_A \rho \ar[r]^-{p_2} \ar[d]_-{p_1}& \rho \ar[d]^{r_1} \\
\rho \ar[r]_{r_2} & A.
}
$$ 

A morphism $(A, \rho) \rightarrow  (B, \sigma)$ in the category $\mathsf{PreOrd}(\mathbb C)$ of preorders in $\mathbb C$ is a pair of morphisms $(f,\hat{f})$ in $\mathbb C$ making the following diagram commute
\begin{equation}\label{morphism-1}
\xymatrix{  \rho \ar@<.5ex>[d]^{r_2} \ar@<-.5ex>[d]_{r_1}  \ar[r]^{\hat{f}}  & \sigma \ar@<.5ex>[d]^{s_2} \ar@<-.5ex>[d]_{s_1}  \\
 A \ar[r]_{f} & {B,} &
}
\end{equation}
in the sense that $f r_1= s_1 \hat{f} $ and $f r_2= s_2 \hat{f}$. For ease of notation, we shall often write $f$ instead of $(f,\hat{f})$ for a morphism in $\PreOrd C$ (remark that $\hat{f}$ is uniquely determined by $f$, when it exists). Note that the category $\mathsf{PreOrd} (\mathbb{C})$ is generally not regular when the category $\mathbb{C}$ is coherent: indeed, it is well-known that regular epimorphisms are not pullback stable even in $\mathsf{PreOrd} (\mathsf{Set})$ ({see \cite[Section~2]{JS}, or \cite[Example 2.4]{Lorengian}}, for instance).

Recall that an \emph{equivalence relation} in $\mathbb C$ is a preorder $(A, \rho)$ as above which is also \emph{symmetric}: there is an arrow $s \colon \rho \rightarrow \rho$ such that $r_1 s = r_2$  and $r_2 s = r_1$. Equivalently, one can ask that the opposite relation $\rho^\circ$ of $\rho$ is isomorphic to $\rho$: $\rho^\circ = \rho$. We write $\Equiv C$ for the full subcategory of $\PreOrd C$ whose objects are the equivalence relations in $\mathbb C$. A \emph{partial order} in $\mathbb C$ is a preorder $(A, \rho)$ having the additional property that $\rho \cap \rho^\circ = \Delta_A$, where $\Delta_A$ is the discrete equivalence relation on $A$ (that is, the equality relation on $A$). $\ParOrd C$ will denote the full subcategory of $\PreOrd C$ whose objects are the partial orders in $\mathbb C$.
 {In $\PreOrd C$ there is a class $\mathcal Z:=\ParOrd C \cap \Equiv C$ of \emph{trivial objects}: the \emph{discrete} equivalence relations or, in other words, the equality relations}. As in \cite{FF, FFG2} we shall say that a morphism \eqref{morphism-1} in $\PreOrd C$ is \emph{trivial} if it factors through a discrete equivalence relation.
By the universal property of the kernel pair $(\Eq(f), f_1, f_2)$ of $f$, a morphism \eqref{morphism-1} in $\PreOrd C$ is trivial if and only if there is a (unique) morphism $i \colon  \rho \rightarrow \Eq(f)$ making the left-hand side of the diagram
$$
\xymatrix@=30pt{  & \rho \ar@<.5ex>[d]^{r_2} \ar@{.>}[ld]_{i} \ar@<-.5ex>[d]_{r_1}  \ar[r]^{\hat{f}}  & \sigma \ar@<.5ex>[d]^{s_2} \ar@<-.5ex>[d]_{s_1}  \\
\Eq(f) \ar@<.5ex>[r]^{f_1} \ar@<-.5ex>[r]_{f_2}& A \ar[r]_{f} & {B.} &
}
$$
{commute in the sense that $f_k i=r_k$ $(k=1,2)$.} This class of \emph{trivial morphisms}, also called ${\mathcal Z}$-trivial, {form an {\em ideal of morphisms} in the sense of Ehresmann~\cite{Ehr}. This means that if $(f,\hat{f})$ is a trivial morphism, then every composite of the form $(h,\hat{h})(f,\hat{f})(g,\hat{g})$ is again trivial (whenever it is defined). These morphisms} play the role of the ``zero morphisms'' in this non-pointed context, leading to a natural notion of $\Cal Z$-kernel. Indeed, if  $(f, \hat{f}) \colon (A, \rho) \rightarrow  (B, \sigma)$ is a morphism in $\PreOrd C$, a morphism $(\varepsilon,\hat{\varepsilon}) \colon (K, \tau) \rightarrow  (A, \rho) $ is a \emph{$\Cal Z$-kernel} of $(f, \hat{f})$ if the composite $(f, \hat{f}) (\varepsilon,\hat{\varepsilon})$ is a $\Cal Z$-trivial morphism in $\PreOrd C$ and, moreover, whenever $(\lambda, \hat{\lambda}) \colon (D, \psi) \rightarrow (A, \rho)$ is a morphism in $\PreOrd C$ such that $(f, \hat{f})  (\lambda, \hat{\lambda})$ is $\Cal Z$-trivial, then $(\lambda, \hat{\lambda})$ factors uniquely through $(\varepsilon, \hat{\varepsilon})$. The notion of $\Cal Z$-cokernel is defined dually. Note that the notions of kernels, cokernels and short exact sequence with respect to an ideal of morphisms played an important role in the work of Lavendhomme \cite{Lav} and Grandis \cite{Gra1, Gra2}. More recently, this approach has also led to a unification of some results in pointed and non-pointed categorical algebra \cite{GJU}.

When $\mathbb C$ is finitely complete, any morphism in 
$\PreOrd C$ has a 
$\Cal Z$-kernel:
\begin{Lemma}\label{Prop_prekernel}
Let $(f, \hat{f}) \colon (A, \rho) \rightarrow  (B, \sigma)$ be a morphism in $\PreOrd C$. Then
its $\mathcal Z$-kernel exists and is given by the inclusion
$$
 (1_A, \hat{\varepsilon})\colon (A, \Eq(f)\cap \rho)\to (A,\rho),
$$
of $\Eq(f)\cap \rho$ in  $\rho$.
\end{Lemma}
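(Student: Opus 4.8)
The plan is to verify the four conditions in the definition of a $\mathcal Z$-kernel for the inclusion $(1_A,\hat\varepsilon)$. \emph{First, the object and the morphism.} Since the kernel pair $\langle f_1,f_2\rangle\colon\Eq(f)\to A\times A$ is an (effective) equivalence relation, in particular a relation on $A$, the intersection $\Eq(f)\cap\rho$ is a well-defined subobject of $A\times A$; it contains $\Delta_A$, because both $\Eq(f)$ and $\rho$ do, and it is transitive. The latter follows by a routine diagram chase: the transitivity morphisms $\rho\times_A\rho\to\rho$ and $\Eq(f)\times_A\Eq(f)\to\Eq(f)$ induce, through the comparison monomorphisms $\Eq(f)\cap\rho\to\rho$ and $\Eq(f)\cap\rho\to\Eq(f)$ over $A\times A$, two arrows out of $(\Eq(f)\cap\rho)\times_A(\Eq(f)\cap\rho)$ that agree after composition with $\langle r_1,r_2\rangle$, and hence factor through the pullback $\Eq(f)\cap\rho$ in $\Sub(A\times A)$, yielding the required transitivity morphism. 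Thus $(A,\Eq(f)\cap\rho)\in\PreOrd C$, and since the subobject inclusion $\hat\varepsilon\colon\Eq(f)\cap\rho\hookrightarrow\rho$ is compatible with the two projections onto $A$, the pair $(1_A,\hat\varepsilon)$ is a morphism of $\PreOrd C$.

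\emph{Next, $\mathcal Z$-triviality of the composite.} The composite $(f,\hat f)(1_A,\hat\varepsilon)$ equals $(f,\hat f\hat\varepsilon)$, with underlying arrow $f$. Because $\Eq(f)\cap\rho\le\Eq(f)$ in $\Sub(A\times A)$, the comparison monomorphism $i\colon\Eq(f)\cap\rho\to\Eq(f)$ satisfies $f_k\,i=(\text{the }k\text{-th projection of }\Eq(f)\cap\rho)$ for $k=1,2$; this is exactly the factorization through the kernel pair that, by the criterion recalled before the statement, witnesses $\mathcal Z$-triviality.

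\emph{Finally, the universal property.} Let $(\lambda,\hat\lambda)\colon(D,\psi)\to(A,\rho)$ be such that $(f,\hat f)(\lambda,\hat\lambda)$ is $\mathcal Z$-trivial. Since the underlying arrow of $(1_A,\hat\varepsilon)$ is the identity of $A$, any factorization of $(\lambda,\hat\lambda)$ through it must have underlying arrow $\lambda$, and then its ``hat'' part is forced; so uniqueness is automatic, and the only thing to prove is that $\lambda$ does define a morphism $(D,\psi)\to(A,\Eq(f)\cap\rho)$. Now $\Eq(f\lambda)$ is the pullback of $\langle f_1,f_2\rangle\colon\Eq(f)\to A\times A$ along $\lambda\times\lambda\colon D\times D\to A\times A$; hence $\mathcal Z$-triviality of $(f\lambda,\widehat{f\lambda})$ gives a morphism $\psi\to\Eq(f\lambda)$ over $D\times D$ which, composed with the top arrow $\Eq(f\lambda)\to\Eq(f)$ of that pullback square, shows that $\langle\lambda e_1,\lambda e_2\rangle\colon\psi\to A\times A$ (where $e_1,e_2$ are the projections of $\psi$) factors through $\Eq(f)$. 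On the other hand $\hat\lambda$ already exhibits $\langle\lambda e_1,\lambda e_2\rangle$ as factoring through $\rho$. By the universal property of the intersection $\Eq(f)\cap\rho$, this arrow factors through $\Eq(f)\cap\rho$, compatibly with the projections onto $A$, which is precisely the data turning $\lambda$ into a morphism $(D,\psi)\to(A,\Eq(f)\cap\rho)$; composing it with $(1_A,\hat\varepsilon)$ recovers $(\lambda,\hat\lambda)$, since the two resulting ``hat'' components into $\rho$ agree after composition with the monomorphism $\langle r_1,r_2\rangle$.

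\emph{Main obstacle.} No single step is genuinely hard; the only care required is the bookkeeping with the several monomorphisms into $A\times A$ and $D\times D$ and with the internal transitivity morphisms — in particular, checking that the arrow induced on $\Eq(f)\cap\rho$ really is a transitivity morphism. Everything else is a direct application of the universal properties of pullbacks, kernel pairs, and intersections of subobjects available in any coherent (indeed, finitely complete) category.
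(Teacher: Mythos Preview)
Your proof is correct and follows essentially the same approach as the paper's: both use the criterion that a morphism is trivial precisely when its domain relation factors through the kernel pair of the underlying arrow, and both obtain the universal factorization via the intersection $\Eq(f)\cap\rho$. You supply additional detail that the paper leaves implicit --- notably the verification that $\Eq(f)\cap\rho$ is itself a preorder, and the explicit use of $\Eq(f\lambda)=(\lambda\times\lambda)^*\Eq(f)$ to pass from triviality over $D$ to a factorization through $\Eq(f)$ over $A$ --- but the underlying argument is the same.
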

\begin{proof}
 Since $\Eq(f)\cap \rho\rightrightarrows A$ factors through $\Eq(f)$, the morphism $(f, \hat{f}) (1_A, \hat{\varepsilon})$ is trivial. Moreover, if $(\lambda, \hat{\lambda}) \colon (D,\psi) \rightarrow  (A, \rho)$ is another morphism such that $(f, \hat{f}) (\lambda, \hat{\lambda})$ is trivial, then $\psi \rightrightarrows D \to A$ factors through $\Eq(f)$ as well:
 $$
\xymatrix@=15pt{
& &\Eq(f)\cap \rho \ar@/_/[ddll]\ar[rr]^{\hat{\varepsilon}}\paral{dd} & & \rho \ar[rrr]^{\hat{f}} \paral{dd}  &&& \sigma \paral{dd} \\
& & & \psi \paral{dd}\ar[ur]^{\hat{\lambda}} \ar@/_/[dlll] \ar@{.>}[ul]^{\hat{u}}& & & & \\
\Eq(f) \paral{rr} & & A \ar@{=}[rr] & & A\ar[rrr]^f & & & B. \\
& & & D\ar[ur]^{\lambda} \ar@{.>}[ul]^{u} & & & \\
}
$$
Accordingly, there is a unique morphism $(u, \hat{u}) \colon (D,\psi)\to (A, \Eq(f)\cap \rho)$ such that $(\lambda, \hat{\lambda}) =(1_A, \hat{\varepsilon}) (u, \hat{u})$.

\end{proof}
The existence of $\mathcal Z$-cokernels requires stronger assumptions on the base category $\mathbb C$, and will be investigated more thoroughly in Section \ref{Short exact sequences}.
 The following observation will be useful later on:
 
\begin{Lemma}\label{epi-mono-trivial}
Given two composable morphisms $(f,\hat{f})\colon (A,\rho) \to (B,\sigma)$ and \\ $(g, \hat{g})\colon (B, \sigma)\to (C,\tau)$ in $\PreOrd C$ such that $(g,\hat{g})(f,\hat{f}) \colon (A,\rho) \to (C,\tau)$ is trivial, then
\begin{itemize}
\item
if $g\colon A \to B$ is a monomorphism in $\mathbb{C}$, then $(f,\hat{f})$ is trivial;
\item
if $\hat{f}$ is an epimorphism in $\mathbb{C}$, then $(g,\hat{g})$ is trivial. 
\end{itemize}
\end{Lemma}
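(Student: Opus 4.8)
The plan is to translate both items into a single equation between morphisms of $\mathbb C$ and then cancel, respectively, a monomorphism and an epimorphism. The preliminary remark I would make is that the kernel pair $\Eq(\varphi)$ of a morphism $\varphi$ of $\mathbb C$ is the pullback of the diagonal along $\varphi\times\varphi$; together with the characterization of trivial morphisms recalled above, this turns the hypothesis — triviality of $(g,\hat g)(f,\hat f)=(gf,\widehat{gf})$ — into the single equation $gf\, r_1=gf\, r_2\colon\rho\to C$, where $\langle r_1,r_2\rangle\colon\rho\to A\times A$ is the relation underlying $(A,\rho)$. In the same spirit, triviality of $(f,\hat f)$ amounts to $f r_1=f r_2\colon\rho\to B$, and triviality of $(g,\hat g)$ to $g s_1=g s_2\colon\sigma\to C$, where $\langle s_1,s_2\rangle\colon\sigma\to B\times B$ is the relation underlying $(B,\sigma)$. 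Besides these reformulations, the only other ingredient I would use is the pair of identities $f r_k=s_k\hat f$ $(k=1,2)$ expressing that $(f,\hat f)$ is a morphism in $\PreOrd C$.

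For the first item, assuming $g$ — the morphism $B\to C$ of the composite — to be a monomorphism: cancelling $g$ on the left in $g(f r_1)=g(f r_2)$ yields $f r_1=f r_2$, hence $(f,\hat f)$ is trivial. For the second item, assuming $\hat f\colon\rho\to\sigma$ to be an epimorphism: using $f r_k=s_k\hat f$ the hypothesis $gf r_1=gf r_2$ reads $(g s_1)\hat f=(g s_2)\hat f$, and cancelling $\hat f$ on the right gives $g s_1=g s_2$, hence $(g,\hat g)$ is trivial.

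I do not foresee a genuine obstacle; what matters is to choose the right formulation. The tempting detour goes through the $\mathcal Z$-kernel of $(g,\hat g)$ provided by Lemma~\ref{Prop_prekernel}: triviality of the composite forces $(f,\hat f)$, respectively $(g,\hat g)$, to factor through this $\mathcal Z$-kernel, and one is then inclined to finish by arguing that the comparison monomorphism appearing in $\mathbb C$ must be invertible because it is also an epimorphism. That inference is not valid in general — in a coherent category a morphism that is simultaneously monic and epic need not be an isomorphism (a non-discrete poset, regarded as a coherent category, already provides such a morphism). Passing through the equation $gf r_1=gf r_2$ avoids this issue and reduces each of the two items to a single cancellation in $\mathbb C$.
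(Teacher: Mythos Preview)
The paper states this lemma as an ``observation'' and gives no proof, so there is nothing to compare against; your argument is correct and is exactly the standard one. Your reformulation---that triviality of a morphism $(\varphi,\hat\varphi)\colon(X,\xi)\to(Y,\upsilon)$ is equivalent to the single equation $\varphi x_1=\varphi x_2$ where $\langle x_1,x_2\rangle\colon\xi\to X\times X$---is precisely the unwinding of ``$\xi$ factors through $\Eq(\varphi)$'' via the universal property of the kernel pair, and once that is in place both items are one-line cancellations. You also correctly read through the typo in the statement (the hypothesis should be that $g\colon B\to C$, not $A\to B$, is a monomorphism). The closing paragraph on the tempting-but-wrong mono$+$epi $\Rightarrow$ iso route is a nice cautionary remark, though strictly speaking not needed for the proof itself.
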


\section{Clopen subobjects}\label{Section_clopen}

Let $(A,\rho)$ be an object in $\PreOrd {\mathbb C}$. For a subobject $j \colon B \rightarrow A$ in $\mathbb{C}$, we denote by $\rho_B$ the internal preorder on $B$ obtained by ``restricting" the relation $\rho$ to $B$. This is expressed by the fact that $\rho_B$ is defined by the following pullback:
$$
\xymatrix{
\rho_B \ar[r] \ar[d] & \rho \ar[d]\\
B\times B \ar[r]_{j \times j} & A \times A.\\
}
$$

By an \emph{open} subobject $B$ of $(A,\rho)$, we mean (with a slight abuse of notation) a subobject $(B,\rho_B)$ of $(A,\rho)$ where $B$ is a complemented subobject of $A$ and the square
\begin{equation}\label{open}
\xymatrix{
0 \ar[r] \ar[d] & \rho \ar[d]\\
B^c\times B \ar[r] & A\times A, \\
}
\end{equation}
where the lower horizontal arrow is the product subobject, is a pullback, i.e. $(B^c\times B) \cap \rho=0$. 

This terminology is justified by the following observations. 
When $\mathbb C$ is the category $\mathsf{Set}$ of sets, there is a category isomorphism between the category $\mathsf{Alex}$ of Alexandroff-discrete spaces (i.e. the topological spaces having the property that the intersection of any family of open sets is open) and the category $\mathsf{PreOrd}(\mathsf{Set})$ of preordered sets. This category isomorphism associates, with any preordered set $(A, \rho)$, the topological space $(A, \tau_{\rho})$ whose open sets $B$ are the subsets $B$ of $A$ satisfying the following property: if $a\in A$, $b \in B$ and $(a,b) \in \rho$, then $a \in B$.
In a coherent category this property is expressed by the requirement that the diagram \eqref{open} is a pullback.

We say that $B$ is a \emph{clopen} subobject of $(A,\rho)$ when $B$ is a complemented subobject of $A$ and both $(B,\rho_B)$ and $(B^c,\rho_{B^c})$ are open subobjects of $(A,\rho)$, that is, both the commutative diagrams
$$
\xymatrix{
0 \ar[r] \ar[d] & \rho \ar[d] \ar@{}[drr]|{\mbox{and}}& & 0 \ar[r] \ar[d] & \rho \ar[d]\\
B^c\times B \ar[r] & A\times A & & B\times B^c \ar[r] & A\times A \\
}
$$
are pullbacks. As one might expect, finite unions, finite intersections, complements and inverse images of clopen subobjects are still clopen. More precisely, we have the following result:

\begin{lemma}\label{lemma1_clopen}
Let $\mathbb C$ be a coherent category, $(A,\rho)$ an object of $\PreOrd C$ and $B,C$ subobjects of $(A,\rho)$. Then:
\begin{enumerate}
\item
if $B$ is clopen in $(A,\rho)$, then so is $B^c$;
\item
if $B$ and $C$ are open (respectively, clopen) subobjects of $(A,\rho)$, then $B\cup C$ and $B\cap C$ are open (respectively, clopen) subobjects of $(A,\rho)$;
\item
let $(f,\hat{f})\colon(X,\sigma)\to(A,\rho)$ be a morphism in $\PreOrd C$. If $B$ is an open (respectively, clopen) subobject of $(A,\rho)$, then the inverse image $f^*(B)$ is an open (respectively, clopen) subobject of $(X,\sigma)$.
\end{enumerate}
\end{lemma}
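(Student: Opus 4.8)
The plan is to read everything off the definition recalled just above: a complemented subobject $B$ of $A$ is \emph{open} in $(A,\rho)$ precisely when $(B^c\times B)\cap\rho=0$ in $\Sub(A\times A)$, and \emph{clopen} when moreover $(B\times B^c)\cap\rho=0$. Part (1) is then immediate, since the defining condition for $B$ to be clopen ($B$ complemented, and both $(B,\rho_B)$ and $(B^c,\rho_{B^c})$ open) is manifestly symmetric in $B$ and $B^c$, using $(B^c)^c\cong B$.

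For part (2) I would first handle open subobjects. By Remark \ref{complementi}, $B\cup C$ and $B\cap C$ are complemented, with $(B\cup C)^c\cong B^c\cap C^c$ and $(B\cap C)^c\cong B^c\cup C^c$. To see that $B\cup C$ is open, I would expand $\big((B^c\cap C^c)\times(B\cup C)\big)\cap\rho$ using that $\times$ distributes over $\cup$ in each variable (the fact recalled before the Examples, together with its obvious analogue for the second projection) and the distributive law in $\Sub(A\times A)$; this yields a union of two terms, contained respectively in $(B^c\times B)\cap\rho=0$ and $(C^c\times C)\cap\rho=0$, so the whole expression is $0$. The same bookkeeping applied to $\big((B^c\cup C^c)\times(B\cap C)\big)\cap\rho$ gives openness of $B\cap C$. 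The clopen case is then formal: if $B$ and $C$ are clopen, then $B^c$ and $C^c$ are open, so by the open case $(B\cup C)^c\cong B^c\cap C^c$ and $(B\cap C)^c\cong B^c\cup C^c$ are open, and combined with the previous step $B\cup C$ and $B\cap C$ are clopen.

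Part (3) carries the only real content. The key identity to establish is $f^*(B^c)\times f^*(B)\cong(f\times f)^*(B^c\times B)$ in $\Sub(X\times X)$, which follows by pasting pullback squares (together with $f^*(B^c)\cong f^*(B)^c$, from Remark \ref{complementi}, so that $f^*(B)$ is complemented in $X$). Next, the defining commutative square of the morphism $(f,\hat f)$ says exactly that $\sigma\rightarrowtail X\times X$ factors through $\rho\rightarrowtail A\times A$ along $f\times f$, i.e. $\sigma\leq(f\times f)^*(\rho)$. Since pullback functors preserve intersections and send the strict initial object $0$ to $0$, I obtain
$$\big(f^*(B^c)\times f^*(B)\big)\cap\sigma\ \leq\ (f\times f)^*(B^c\times B)\cap(f\times f)^*(\rho)\ \cong\ (f\times f)^*\!\big((B^c\times B)\cap\rho\big)\ =\ (f\times f)^*(0)\ =\ 0,$$
which is precisely openness of $f^*(B)$ in $(X,\sigma)$; repeating the computation with $B^c$ in place of $B$ shows that $f^*(B)^c=f^*(B^c)$ is open as well when $B$ is clopen, so in that case $f^*(B)$ is clopen. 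I do not anticipate a genuine obstacle: everything reduces to manipulations in the distributive lattices $\Sub(A\times A)$ and $\Sub(X\times X)$, the only care needed being to invoke the De Morgan laws and preservation of complements only for complemented subobjects (which is our situation throughout) and to get the identification $f^*(B^c)\times f^*(B)\cong(f\times f)^*(B^c\times B)$ correct.
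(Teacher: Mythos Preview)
Your proposal is correct and follows essentially the same route as the paper: part~(1) is disposed of by the evident symmetry, part~(2) by expanding $((B\cup C)^c\times(B\cup C))\cap\rho$ via distributivity and bounding it by $((B^c\times B)\cup(C^c\times C))\cap\rho=0$, and part~(3) by combining the identification $f^*(B)^c\times f^*(B)\cong(f\times f)^*(B^c\times B)$ with $\sigma\le(f\times f)^*(\rho)$ and the preservation of intersections and of the strict initial object under pullback. The only cosmetic difference is that the paper phrases (3) as first proving openness of $f^*(B)$ in $(X,f^*(\rho))$ and then descending to $(X,\sigma)$, whereas you collapse these two steps into a single inequality; and you spell out the $B\cap C$ case in (2), which the paper leaves implicit.
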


\begin{proof}
$(1)$ Obvious from the definition of clopen subobject.

$(2)$ It suffices to prove that $B\cup C$ is an open subobject of $(A,\rho)$. This immediately follows from the fact that, under our assumptions, $$\rho\cap ((B\cup C)^c\times (B\cup C))=\rho\cap (((B^c\cap C^c)\times B)\cup ((B^c\cap C^c)\times C))$$ is a subobject of $\rho\cap ((B^c\times B)\cup (C^c\times C))=0$.

$(3)$ First observe that there is a canonical inclusion of $\sigma$ in $f^*(\rho)$ that is induced by the universal property of the pullback defining $f^*(\rho)$:
$$
\xymatrix{
\sigma \ar@/_/@<0.3ex>[ddr] \ar@/_/@<-0.3ex>[ddr] \ar@/^/[drr]^{\bar{f}} \ar@{.>}[dr]\\
 & f^*(\rho) \paral{d} \ar[r]& \rho \paral{d}\\
 & X \ar[r]^f   & A.
}
$$
If $B$ is an open subobject of $(A,\rho)$, then $f^*(B)$ is open in $(X,f^*(\rho))$. Indeed,
$$
(f^*(B)^c \times  f^*(B)) \cap  f^*(\rho) = (f \times f)^*( (B^c\times B) \cap  \rho )= (f \times f)^*(0)  =   0,
$$
where the first equality follows from the fact that $f^*$ preserves complements, and the last one from the property that the initial object $0$ is strict.
The fact that $\sigma \le f^*(\rho)$ then easily implies that 
$f^*(B)$ is also an open subobject of $(X,\sigma)$.
\end{proof}

\begin{corollary}\label{corollary1_clopen}
Let $(A,\rho)$ be an object of $\PreOrd C$. Take $X\in \mathsf{Sub}(A)$, $B\in \mathsf{Sub}(X)$ and assume that $B$ is open (respectively, clopen) in $(A,\rho)$. Then $B$ is open (respectively, clopen) in $(X,\rho_X)$.
\end{corollary}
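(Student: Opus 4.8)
The plan is to exhibit $(X,\rho_X)$ as the domain of a morphism of $\PreOrd C$ landing in $(A,\rho)$, and then to read the statement off from the stability of open/clopen subobjects under inverse images already established in Lemma~\ref{lemma1_clopen}(3). So the first thing I would do is check that the subobject inclusion $j\colon X\to A$ of $\mathbb C$ underlies a morphism $(j,\widehat j)\colon (X,\rho_X)\to (A,\rho)$ in $\PreOrd C$. Here $\widehat j\colon\rho_X\to\rho$ is nothing but the upper horizontal arrow of the pullback square defining $\rho_X$; post-composing the commutativity relation of that square with the two projections $A\times A\rightrightarrows A$ yields exactly the identities $j\,(r_X)_k=r_k\,\widehat j$ ($k=1,2$) required by \eqref{morphism-1}, so $(j,\widehat j)$ is indeed a morphism of internal preorders. (That $(X,\rho_X)$ is itself an internal preorder is built into the very definition of the restricted relation $\rho_B$.)

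With this in hand I would simply apply Lemma~\ref{lemma1_clopen}(3) to $(j,\widehat j)$: since $B$ is open (respectively clopen) in $(A,\rho)$, the inverse image $j^*(B)$ is open (respectively clopen) in $(X,\rho_X)$. It then only remains to identify $j^*(B)$ with $B$: pulling the subobject $B\to A$ back along $j$ gives $B\cap X$ computed in $\Sub(A)$ (equivalently, the pullback $B\times_A X$, which since $j$ is a monomorphism is carried isomorphically onto $B$ by the first projection), and this subobject of $X$ is $B$ itself because $B\le X$ by hypothesis. Hence $B$ is open (respectively clopen) in $(X,\rho_X)$, as desired.

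I do not expect a genuine obstacle here: once Lemma~\ref{lemma1_clopen}(3) is available the corollary is essentially immediate, and the only points to verify are the two routine facts above, namely that the restriction inclusion is a morphism of $\PreOrd C$ and that $j^*$ fixes subobjects already contained in $X$. If one preferred to argue without invoking Lemma~\ref{lemma1_clopen}(3), one would instead unwind everything inside the distributive lattices $\Sub(A)$ and $\Sub(A\times A)$: the complement of $B$ relative to $X$ is $B^c\cap X$ by distributivity, one has $\rho_X\cong(j\times j)^*(\rho)$ with $j\times j$ a monomorphism, and therefore $\big((B^c\cap X)\times B\big)\cap\rho_X$ is carried by the meet-preserving, $0$-reflecting embedding $\Sub(X\times X)\hookrightarrow\Sub(A\times A)$ into a subobject of $(B^c\times B)\cap\rho=0$, so it vanishes; the clopen case follows symmetrically from $(B\times B^c)\cap\rho=0$. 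In this second route the only mildly delicate point is the bookkeeping of complements relative to the sub-interval $[0,X]$ of $\Sub(A)$, which is exactly the kind of observation recorded in Remark~\ref{complementi}.
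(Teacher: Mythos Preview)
Your proposal is correct and follows exactly the same approach as the paper: apply Lemma~\ref{lemma1_clopen}(3) to the inclusion morphism $(X,\rho_X)\to(A,\rho)$. You simply make explicit the two routine verifications (that this inclusion is a morphism of $\PreOrd C$ and that $j^*(B)=B$) which the paper leaves implicit in its one-line proof.
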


\begin{proof}
Apply Lemma~\ref{lemma1_clopen} to the monomorphism $(X, \rho_X)\to (A,\rho)$.
\end{proof}

\begin{lemma}\label{lemma2_clopen}
Let $B$ be a clopen subobject of $(A,\rho)$. Then
$
\rho_{_A} \cong \rho_{_B} \coprod \rho_{_{B^c}}.
$
\end{lemma}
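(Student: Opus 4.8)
The plan is to work entirely inside the lattice $\Sub(A\times A)$ and to split $\rho$ into four pieces according to the decomposition of $A\times A$ induced by the complemented subobject $B$. Since $B$ is complemented we have $A\cong B\coprod B^c$, hence (exactly as recalled in the preliminaries, using that products distribute over unions of subobjects and that a union of disjoint subobjects is a coproduct) $A\times A\cong(B\times B)\coprod(B\times B^c)\coprod(B^c\times B)\coprod(B^c\times B^c)$. Intersecting this with $\rho$ and applying the distributive law $\rho\cap(U\cup V)\cong(\rho\cap U)\cup(\rho\cap V)$ valid in $\Sub(A\times A)$, I would obtain
$$
\rho\;\cong\;\bigl(\rho\cap(B\times B)\bigr)\cup\bigl(\rho\cap(B\times B^c)\bigr)\cup\bigl(\rho\cap(B^c\times B)\bigr)\cup\bigl(\rho\cap(B^c\times B^c)\bigr).
$$

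Next I would invoke the hypothesis that $B$ is \emph{clopen}: by definition this is exactly the vanishing of the two ``mixed'' terms, $\rho\cap(B\times B^c)\cong 0$ and $\rho\cap(B^c\times B)\cong 0$, so the display collapses to $\rho\cong\bigl(\rho\cap(B\times B)\bigr)\cup\bigl(\rho\cap(B^c\times B^c)\bigr)$. I would then identify the two surviving summands with the restricted relations: because $j\times j\colon B\times B\rightarrowtail A\times A$ is a monomorphism, the pullback of $\rho$ along it --- which is by definition $\rho_B$ --- represents precisely the subobject $\rho\cap(B\times B)$ of $A\times A$, and likewise $\rho_{B^c}$ represents $\rho\cap(B^c\times B^c)$. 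Finally, $\rho_B$ and $\rho_{B^c}$ are disjoint subobjects of $A\times A$: indeed $\rho_B\cap\rho_{B^c}\le(B\times B)\cap(B^c\times B^c)$, and a routine computation (using that $B\cap B^c\cong 0$, that pullback functors preserve intersections, and that $0$ is strict, so that $B\times B$ and $B^c\times B^c$ are distinct summands of the coproduct above) shows this meet is $0$. Hence, again by the fact recalled above that the union of disjoint subobjects is their coproduct in $\mathbb C$, we conclude $\rho\cong\rho_B\cup\rho_{B^c}\cong\rho_B\coprod\rho_{B^c}$, as required.

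I do not expect a substantial obstacle here; the only point demanding a little care is the bookkeeping identifying the pullback-defined $\rho_B$ with the meet $\rho\cap(B\times B)$ once the latter is transported along $B\times B\rightarrowtail A\times A$ (and the same for $B^c$), together with the elementary verification that $B\times B$ and $B^c\times B^c$ are disjoint in $A\times A$. If one moreover wants the decomposition to hold in $\PreOrd C$ and not merely in $\mathbb C$, one observes in addition that the two coprojections $\rho_B\rightarrowtail\rho\rightarrowtail A\times A$ and $\rho_{B^c}\rightarrowtail\rho\rightarrowtail A\times A$ are the relational components of morphisms of internal preorders $(B,\rho_B)\to(A,\rho)$ and $(B^c,\rho_{B^c})\to(A,\rho)$, so that the isomorphism constructed above is one of internal preorders, $(A,\rho)\cong(B,\rho_B)\coprod(B^c,\rho_{B^c})$; but for the statement as phrased the computation in $\Sub(A\times A)$ already suffices.
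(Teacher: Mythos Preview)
Your proposal is correct and follows essentially the same route as the paper's proof: decompose $A\times A$ into the four summands $(B\times B)\coprod(B\times B^c)\coprod(B^c\times B)\coprod(B^c\times B^c)$, pull back $\rho$ along each (equivalently, intersect), use the clopen hypothesis to kill the two mixed pieces, and conclude by coherence that the union of the remaining disjoint pieces $\rho_B$ and $\rho_{B^c}$ is their coproduct. The paper presents this more tersely via a single pullback diagram, but the argument is the same.
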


\begin{proof} For a clopen subobject $B$ of $A$ we have the situation described by the following diagram, where all arrows are monomorphisms and all squares are pullbacks:
$$
\xymatrix@=30pt@!0{
 & & & \rho_{_{B^c}}\ar[dd]\ar[dl] & \\
0 \ar[rr]\ar[dd] & & \rho_{_A} \ar[dd]& & 0 \ar[ll]\ar[dd] \\
 & \rho_{_B} \ar[dd]\ar[ur] & & B^c\times B^c\ar[dl] & \\
B\times B^c \ar[rr] & & A\times A & & B^c\times B \ar[ll] \\
& B\times B \ar[ur]& & & \\
}
$$
As we recalled in the first section, we know that $$A\times A\cong (B\times B)\coprod (B\times B^c) \coprod (B^c\times B) \coprod (B^c\times B^c)$$ so that, by the coherence of $\mathbb C$, it follows that $\rho_{_A}\cong\rho_{_B} \coprod \rho_{_{B^c}}$.
\end{proof}

\begin{corollary}\label{remark_disjoint_clopen}
If $B$ and $C$ are disjoint clopen subobjects of $(A,\rho)$, then $\rho_{B\coprod C}\cong\rho_B \coprod \rho_C$.
\end{corollary}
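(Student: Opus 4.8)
The plan is to reduce the statement to Lemma~\ref{lemma2_clopen} by passing to the ambient object $B\coprod C$. First I would record that, since $B$ and $C$ are disjoint subobjects of $A$, their union $B\cup C$ coincides with the coproduct $B\coprod C$ in $\mathbb C$ (as recalled in Section~1), and that by Lemma~\ref{lemma1_clopen}(2) the subobject $B\cup C$ is clopen in $(A,\rho)$. Hence it makes sense to consider the internal preorder $(B\cup C,\rho_{B\cup C})$, and the statement to be proved is precisely a statement about this object.

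Next I would observe that $B$ and $C$ are complemented subobjects of $B\cup C$ with $B^c=C$ and $C^c=B$ computed inside $\mathsf{Sub}(B\cup C)$: indeed $B\cap C\cong 0$ and $B\cup C$ is the top element of that lattice. By Corollary~\ref{corollary1_clopen}, since $B,C\in\mathsf{Sub}(B\cup C)$ are clopen in $(A,\rho)$, they are also clopen in $(B\cup C,\rho_{B\cup C})$. Applying Lemma~\ref{lemma2_clopen} to the clopen subobject $B$ of $(B\cup C,\rho_{B\cup C})$, whose complement in $(B\cup C,\rho_{B\cup C})$ is $C$, then yields
$$
\rho_{B\cup C}\;\cong\;(\rho_{B\cup C})_{B}\;\coprod\;(\rho_{B\cup C})_{C}.
$$

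Finally I would identify the two summands with $\rho_B$ and $\rho_C$. This is a routine pullback-pasting argument: the relation $(\rho_{B\cup C})_B$ is obtained by pulling $\rho\hookrightarrow A\times A$ back along $B\times B\hookrightarrow (B\cup C)\times(B\cup C)$ and then along $(B\cup C)\times(B\cup C)\hookrightarrow A\times A$; since $B\times B\to A\times A$ is exactly the composite of these two monomorphisms, the pasting lemma for pullbacks gives $(\rho_{B\cup C})_B\cong\rho_B$, and symmetrically $(\rho_{B\cup C})_C\cong\rho_C$. Substituting into the displayed isomorphism gives $\rho_{B\coprod C}\cong\rho_B\coprod\rho_C$, as claimed.

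I do not expect a genuine obstacle here: the argument is bookkeeping with restrictions, with complements inside the sublattice $\mathsf{Sub}(B\cup C)$, and with the coherence facts from Section~1. The only point requiring mild care is the transitivity of restriction, namely that $(\rho_X)_B\cong\rho_B$ whenever $B\le X\le A$, which is precisely the pasting lemma for pullbacks; and checking that the complement of $B$ inside $\mathsf{Sub}(B\cup C)$ really is $C$, which uses nothing beyond $B\cap C\cong 0$ together with $B\cup C$ being the ambient object.
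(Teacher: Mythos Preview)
Your proof is correct and follows essentially the same approach as the paper, which simply cites Corollary~\ref{corollary1_clopen} and Lemma~\ref{lemma2_clopen}. You have merely made explicit the bookkeeping that the paper leaves to the reader, including the identification $(\rho_{B\cup C})_B\cong\rho_B$ via pullback pasting and the fact that $C$ is the complement of $B$ inside $B\cup C$.
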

\begin{proof}
This follows from Corollary~\ref{corollary1_clopen} and Lemma~\ref{lemma2_clopen}.
\end{proof}

\section{The Facchini-Finocchiaro approach to the stable category}\label{Section_stable_category}

Let us begin by briefly explaining, in the context of a coherent category $\mathbb C$, the construction of the \emph{stable category} of $\PreOrd C$ given in \cite{FF} in the case $\mathbb C$ is the category $\mathsf{Set}$ of sets. We omit some details, since these results will follow from the ones in the next section. 

When $f\colon A\to A'$ is a morphism in $\mathbb{C}$ and $B$ a subobject of $A$, with representing monomorphism $i: B\rightarrow A$, we shall often write $f|_B$ for the composite $f i$, that is the restriction of $f$ to $B$.

\medskip

For every pair of objects $(A, \rho)$ and $(A', \rho')$ in $\PreOrd C$, let $R_{A,A'}$ be the relation on the set $\Hom((A,\rho),(A',\rho'))$ defined, for every $f,g: (A, \rho)\to (A', \rho')$, by $fR_{A,A'}g$ if there exists a clopen subobject $B$ of $(A, \rho)$ such that $f|_{B}=g|_{B}$ and 
$f|_{B^c}$ and $g|_{B^c}$ are two trivial morphisms.

\begin{Prop}
Let $\mathbb C$ be a coherent category. Then the assignment associating the relation $R_{A,A'}$ with any $((A, \rho),(A', \rho')) \in \PreOrd C \times \PreOrd C$ is a congruence (in the sense of \cite{MacLane}, p. 51) on the category $\PreOrd C$.
\end{Prop}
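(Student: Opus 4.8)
The plan is to verify the two defining conditions of a congruence on a category in the sense of Mac Lane: (i) each $R_{A,A'}$ is an equivalence relation on the hom-set $\Hom((A,\rho),(A',\rho'))$, and (ii) the family $\{R_{A,A'}\}$ is compatible with composition, i.e. if $f\, R_{A,A'}\, g$ and $f'\, R_{A',A''}\, g'$ then $(f'\circ f)\, R_{A,A''}\, (g'\circ g)$.

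\emph{Equivalence relation.} Reflexivity is immediate by taking $B = A$ (so $B^c = 0$; any morphism out of the initial object $0$ is trivial since $0\in\mathcal Z$, or more simply $f|_{B^c}$ factors through $0$ which is a discrete equivalence relation). Symmetry is obvious from the symmetry of the defining conditions in $f$ and $g$. For transitivity, suppose $f\, R\, g$ via a clopen $B$ and $g\, R\, h$ via a clopen $C$. The natural candidate witnessing $f\, R\, h$ is $B\cap C$, which is clopen by Lemma~\ref{lemma1_clopen}(2). On $B\cap C$ we have $f|_{B\cap C} = g|_{B\cap C} = h|_{B\cap C}$. The work is to check that $f$ and $h$ are trivial when restricted to $(B\cap C)^c = B^c\cup C^c$. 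Here I would use Corollary~\ref{remark_disjoint_clopen} / Lemma~\ref{lemma2_clopen}: $(B\cap C)^c$ decomposes (as a subobject of $(A,\rho)$, up to the clopen decomposition) into pieces on which we already know $f$ — respectively $h$ — is trivial, namely $B^c$ (where $f|_{B^c}$ is trivial) together with $C^c \cap B$ (where $f$ agrees with $g$, and $g|_{C^c}$ is trivial, so $g|_{C^c\cap B}$ is trivial because $\mathcal Z$-trivial morphisms form an ideal — precomposing with the inclusion $C^c\cap B \hookrightarrow C^c$). Since the restriction of $\rho$ to a coproduct of clopen pieces is the coproduct of the restrictions, and a morphism out of a coproduct is trivial iff each component is trivial (both components factor through discrete equivalence relations, hence so does the coproduct, as the coproduct of discrete equivalence relations is again one), we conclude $f|_{(B\cap C)^c}$ is trivial; symmetrically for $h$.

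\emph{Compatibility with composition.} Given $f\, R_{A,A'}\, g$ witnessed by a clopen $B\subseteq (A,\rho)$ and $f'\, R_{A',A''}\, g'$ witnessed by a clopen $B'\subseteq(A',\rho')$, the candidate clopen witnessing $(f'f)\, R\, (g'g)$ is $D := B \cap f^*(B')$. This is clopen in $(A,\rho)$: $f^*(B')$ is clopen by Lemma~\ref{lemma1_clopen}(3) (note $f$ and $g$ agree on $B$, so $f^*(B')\cap B = g^*(B')\cap B$, which matters below), and intersections of clopens are clopen by Lemma~\ref{lemma1_clopen}(2). On $D$: since $D\subseteq B$ we have $f|_D = g|_D$, and since $D \subseteq f^*(B')$ we have $f(D)\subseteq B'$, so $f'|_{f(D)} = g'|_{f(D)}$; hence $(f'f)|_D = (g'g)|_D$. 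On $D^c = B^c \cup (B\cap f^*(B')^c)$: on $B^c$, $f|_{B^c}$ is trivial, so $(f'f)|_{B^c} = f'\circ (f|_{B^c})$ is trivial because trivial morphisms form an ideal (postcomposition preserves triviality); on $B\cap f^*(B')^c$, we have $f = g$ there and $f$ maps this piece into $(B')^c$, on which $f'$ is trivial, so again $(f'f)$ restricted there equals $f'|_{(B')^c}$ precomposed with a morphism, hence trivial. Then, exactly as in the transitivity argument, the clopen decomposition of $D^c$ into these two pieces (via Lemma~\ref{lemma2_clopen} and Corollary~\ref{remark_disjoint_clopen}) lets us assemble these into triviality of $(f'f)|_{D^c}$; the same reasoning with $g', g$ gives triviality of $(g'g)|_{D^c}$.

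\emph{Main obstacle.} The routine parts are the lattice manipulations with complemented subobjects (Remark~\ref{complementi}) and the bookkeeping of which clopen restricts to what. The genuinely delicate point — used repeatedly — is the claim that a morphism in $\PreOrd C$ out of an object whose underlying preorder splits as a coprobject $\rho_B \coprod \rho_{B^c}$ is $\mathcal Z$-trivial as soon as each restriction is: this requires knowing that a coproduct (in $\PreOrd C$) of discrete equivalence relations is again a discrete equivalence relation, and that the relevant factorizations glue along the coproduct. I would isolate this as the key sublemma and prove it first, then feed it into both halves of the argument. Everything else is then a matter of choosing the right witnessing clopen ($B\cap C$ for transitivity, $B\cap f^*(B')$ for composition) and invoking the ideal property of $\mathcal Z$-trivial morphisms together with Lemmas~\ref{lemma1_clopen} and~\ref{lemma2_clopen}.
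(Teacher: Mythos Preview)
Your argument for transitivity is essentially identical to the paper's: same witnessing clopen $B\cap C$, same decomposition of its complement as $B^c \coprod (B\cap C^c)$, and the same mechanism (via Corollary~\ref{remark_disjoint_clopen}) for assembling the two trivial restrictions into triviality on the whole complement. The paper phrases the gluing step as a factorization of $\rho_{B^c}\coprod\rho_{C_0}$ through the kernel pair $\Eq(f|_{B^c\coprod C_0})$ rather than through a coproduct of discrete objects; this is a safer formulation in a merely coherent $\mathbb C$, where arbitrary binary coproducts need not exist, so your sublemma ``a coproduct of discrete equivalence relations is discrete'' should be reworded accordingly (the coproducts you actually need are unions of disjoint complemented subobjects, which do exist).

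For compatibility with composition, the paper explicitly \emph{omits} the verification, deferring to the alternative construction of Section~\ref{new-def}. Your direct argument with the witness $D=B\cap f^*(B')$ is the natural one and is correct; the parenthetical observation that $f^*(B')\cap B=g^*(B')\cap B$ (since $f|_B=g|_B$) is exactly what makes the same $D$ work for both $f'f$ and $g'g$. So here you supply what the paper leaves out.
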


\begin{proof}
The relations $R_{A,A'}$ are clearly reflexive and symmetric. Let us prove that they are also transitive. Fix $f,g,h: (A, \rho)\to (A', \rho')$ with $fR_{A,A'}g$ and $gR_{A,A'}h$. By definition, there exist two clopen subobjects $B,C$ of $(A,\rho)$ such that {$f|_{B}=g|_{B}$}, {$g|_{C}=h|_{C}$}, while
$f|_{B^c},g|_{B^c},g|_{C^c}$ and $h|_{C^c}$ are four trivial morphisms. 
By Lemma~\ref{lemma1_clopen} we already know that $B^c\cup C^c$ is a clopen subobject of $(A,\rho_A)$, and one clearly has that
 $f|_{(B^c \cup C^c)^c}=h|_{(B^c\cup C^c)^c}$, since $(B^c \cup C^c)^c \cong B \cap C$.

We are going to prove that both $f|_{B^c\cup C^c}$ and $h|_{B^c\cup C^c}$ are trivial morphisms. We then write $B^c \cup C^c \cong B^c \coprod C_0$, where $C_0:= B \cap (B^c\cup C^c)$ is a complement of $B^c$ in $B^c\cup C^c$. Notice that $C_0$ is a clopen subobject of $(A,\rho)$ (by Lemma~\ref{lemma1_clopen}) and therefore Corollary~\ref{corollary1_clopen} ensures that both $B^c$ and $C_0$ are clopen subobjects of $(B^c\cup C^c, \rho_{B^c \cup C^c})$.  By Corollary~\ref{remark_disjoint_clopen}, we also have that $\rho_{B^c \cup C^c}\cong\rho_{B^c} \coprod\rho_{C_0}$. Moreover, the morphism $h|_{C_0}$ is trivial, since it is the composite of the trivial morphism $h|_{C^c}$ with the monomorphism $C_0\to C^c$. The situation is described by the following diagram (explained below):
$$
\xymatrix@=25pt{
	 & \rho_{B^c} \ar@/_/[dl]\ar[rd] \paral{d}& & \\
\Eq(f|_{B^c})\paral{r} \ar[d]& {B^c\, \,  } \ar[dr] & \rho \paral{d}\paral{rr} && \rho' \paral{d}\\
\Eq(f|_{B^c \coprod C_0) }& \rho_{C_0} \ar@/_/[ld] \paral{d}\ar[ur] & A \paral{rr}^f_g & &A'\\
\Eq(g|_{C_0})=\Eq(f|_{C_0})\ar[u]\paral{r} & C_0 \ar[ur]& & \\
}
$$
Since $f|_{B^c}$ is a trivial morphism, $\rho_{B^c}$ factors through $\Eq(f|_{B^c})$. Similarly, $\rho_{C_0}$ factors through $\Eq(g|_{C_0})$. Moreover, $\Eq(g|_{C_0})=\Eq(f|_{C_0})$ because $C_0$ is (isomorphic to) a subobject of $B^c$ and $f|_{B^c}=g|_{B^c}$. The two morphisms $\rho_{B^c} \to \Eq(f|_{B^c \coprod C_0})$ and $\rho_{C_0}\to\Eq(f|_{B^c \coprod C_0})$ and the universal property of the coproduct gives a unique (dotted) factorization as in the diagram
$$
\xymatrix{
 &{ \rho_{B^c \cup C^c}\cong\rho_{B^c} \coprod\rho_{C_0} \,\,} \ar[r]\paral{d}\ar@/_/@{.>}[dl] & \rho \paral{d}\ar[rr] && \rho' \paral{d}\\
\Eq(f|_{B^c \coprod C_0})\paral{r} & B^c \coprod C_0 \ar[r] & A \ar[rr]^f & &A'\\
}
$$
showing that $f|_{B^c \cup C^c}$ is a trivial morphism. Similarly, $h|_{B^c \cup C^c}$ is a trivial morphism.
This proves that $R_{A,A'}$ is an equivalence relation on $\Hom((A,\rho),(A',\rho'))$ for each pair of objects $(A,\rho)$ and $(A',\rho')$ in $\PreOrd C$.

It remains to prove the ``compatibility" of the equivalence relations $R_{A,A'}$ with the composition. We omit the rather long verification of this fact since, as already mentioned, an alternative approach will be presented in the next section.
\end{proof}

The quotient category $\PreOrd C/R$ has the same objects as $\PreOrd C$, and is denoted by $\Stab$. It was called the \emph{stable category} by Facchini and Finocchiaro in \cite{FF}. For the sake of precision, since we shall be working with another stable category, we shall call it the FF-stable category. Given a morphism $f:(A,\rho)\to (B,\sigma)$ in $\PreOrd C$, the corresponding morphism in $$\Hom_\Stab ((A,\rho),(B,\sigma))=\Hom_{\PreOrd C}((A,\rho),(B,\sigma))/R_{A,B}$$ is denoted by $\underline{f} \colon (A,\rho)\to (B,\sigma)$. The functor
$$
{\mathcal S} \colon \PreOrd C \longrightarrow  \Stab
$$
is the canonical quotient that is the identity on objects and such that ${\mathcal S}(f)=\underline{f}$ for every morphism $f$ in $\PreOrd C$.

In order to have a ``pointed version'' of the stable category $\Stab$ one can then proceed as in \cite{FF}.
It is clear from the definition of the FF-stable category that if $(Z,\Delta_Z)$ is a trivial object in $\PreOrd C$ and $(A,\rho)$ is any other object in $ \PreOrd C$, then both $\Hom_\Stab((Z,\Delta_Z),(A,\rho))$ and $\Hom_\Stab((A,\rho), (Z,\Delta_Z))$ have \emph{at most} one element. In order to obtain a pointed category $\Stab^*$ out of $\Stab$, one then considers the full subcategory $\PreOrd C^*$ of $\PreOrd C$ whose objects $(A,\rho)$ admit a ``global element'' $(1,\Delta_1)\to (A,\rho)$, so that the unique arrow $(A,\rho) \to (1,\Delta_1)$ is a split epimorphism (notice that $(A,\rho)$ has a global element if and only if $A$ does).
Then, the canonical quotient functor ${\mathcal S} \colon \PreOrd C\to \Stab$
induces a functor
\begin{equation}\label{quotient-category}
{\mathcal S}^*\colon \PreOrd C^* \longrightarrow \Stab^*
\end{equation}
where $\Stab^*$ denotes the full image of $\PreOrd C^*$ under $\mathcal S$.
The following Proposition shows that $\Stab^*$ is a pointed category having the following property: the objects of $\PreOrd C^*$ that become isomorphic to the zero object in $\Stab^*$ are precisely the trivial ones.

\begin{Prop}\label{trivial objects}
The category $\Stab^*$ is a pointed category, whose zero object is the terminal object $(1,\Delta_1)$ of $\PreOrd C$. Moreover, the following conditions hold:
\begin{enumerate}
\item
if $(Z, \zeta)$ is an object in $\PreOrd C^*$, then $(Z,\zeta)\cong (1,\Delta_1)$ in $\Stab^*$ if and only if $(Z,\zeta) \in \Disc C$;
\item
if $f\colon (A,\rho)\to (B, \sigma)$ is a morphism in $\PreOrd C^*$, then $\underline{f}$ is the zero morphism in $\Stab^*$ if and only if $f$ is a trivial morphism.
\end{enumerate}
\end{Prop}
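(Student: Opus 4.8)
The plan is to verify everything directly on the congruence $R$, using three elementary observations. First, in any $(A,\rho)$ both the least subobject $0$ and $A$ itself are clopen, since the squares defining openness reduce to intersections of $\rho$ with $A\times 0$ or $0\times A$, and these vanish because the initial object is strict. Second, by the definition of triviality, every morphism of $\PreOrd C$ whose domain or whose codomain is a discrete equivalence relation is trivial. Third, $(f,\hat f)\colon(A,\rho)\to(B,\sigma)$ is trivial if and only if $\rho\le\Eq(f)$ as subobjects of $A\times A$, where $\Eq(f)$ denotes the kernel pair of $f$: the factorization criterion recalled before Lemma~\ref{Prop_prekernel} asks exactly that the monomorphism $\langle r_1,r_2\rangle\colon\rho\to A\times A$ factor through the monomorphism $\Eq(f)\to A\times A$.

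First I would check that $(1,\Delta_1)$, which lies in $\PreOrd C^*$, is a zero object of $\Stab^*$. It is terminal in $\PreOrd C$, hence in $\Stab^*$. For initiality, let $X\in\PreOrd C^*$: there is at least one morphism $(1,\Delta_1)\to X$ (a global element), and any two such are trivial by the second observation, so the clopen subobject $0$ of $(1,\Delta_1)$ witnesses that they are $R$-related; thus $\Hom_{\Stab^*}((1,\Delta_1),X)$ is a singleton, $\Stab^*$ is pointed, and the zero morphism $(A,\rho)\to(B,\sigma)$ is represented by $g\circ{!_A}$ for any global element $g$ of $(B,\sigma)$. This already gives the ``if'' direction of~(2): if $f$ is trivial, then, using the clopen subobject $0$ of $(A,\rho)$ and the triviality of $g\circ{!_A}$, one gets $f\mathrel{R}g\circ{!_A}$, so $\underline f$ is the zero morphism. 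It also gives the ``if'' direction of~(1): if $\zeta=\Delta_Z$, the same singleton argument, now applied in both variables, shows that $(Z,\Delta_Z)$ is itself a zero object of $\Stab^*$, hence isomorphic to $(1,\Delta_1)$.

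For the ``only if'' direction of~(1), suppose $\underline f\colon(Z,\zeta)\to(1,\Delta_1)$ and $\underline g\colon(1,\Delta_1)\to(Z,\zeta)$ are mutually inverse in $\Stab^*$ and lift $g$ to a morphism $(g_0,\hat g)$ of $\PreOrd C$ (a global element of $(Z,\zeta)$); then $f$ is the unique morphism $(Z,\zeta)\to(1,\Delta_1)$, with underlying map ${!_Z}\colon Z\to 1$. The equation $\underline g\circ\underline f=\mathrm{id}_{(Z,\zeta)}$ means $g_0\circ{!_Z}\mathrel{R}\mathrm{id}_Z$, so there is a clopen $B$ of $(Z,\zeta)$ with $(g_0\circ{!_Z})|_B=\mathrm{id}_Z|_B$ and $\mathrm{id}_Z|_{B^c}$ trivial. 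The inclusion $\mathrm{id}_Z|_{B^c}\colon(B^c,\zeta_{B^c})\to(Z,\zeta)$ has monic underlying map, so by the third observation its triviality forces $\zeta_{B^c}=\Delta_{B^c}$. On the other hand, $(g_0\circ{!_Z})|_B=\mathrm{id}_Z|_B$ says that the inclusion $B\hookrightarrow Z$ factors through $g_0\colon 1\to Z$, whence $B\to 1$ is a monomorphism, i.e.\ $B$ is subterminal; then $B\times B\cong B$ via the diagonal, so $\Delta_B$ is the top element of $\mathsf{Sub}(B\times B)$, and the reflexive relation $\zeta_B$ satisfies $\Delta_B\le\zeta_B\le B\times B=\Delta_B$, i.e.\ $\zeta_B=\Delta_B$. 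By Lemma~\ref{lemma2_clopen}, $\zeta\cong\zeta_B\coprod\zeta_{B^c}\cong\Delta_B\coprod\Delta_{B^c}\cong\Delta_Z$, so $(Z,\zeta)\in\Disc C$.

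Finally, for the ``only if'' direction of~(2): if $\underline f$ is the zero morphism then $f\mathrel{R}g\circ{!_A}$, so there is a clopen $C$ of $(A,\rho)$ with $f|_C=(g\circ{!_A})|_C=g\circ{!_C}$ and $f|_{C^c}$ trivial; the first equation shows $f|_C$ factors through $(1,\Delta_1)$ and is hence trivial. Writing $\rho\cong\rho_C\coprod\rho_{C^c}$ by Lemma~\ref{lemma2_clopen}, which inside $\mathsf{Sub}(A\times A)$ is the union $\rho_C\cup\rho_{C^c}$, and applying the third observation to $f|_C$ and $f|_{C^c}$ (whose kernel pairs are $\Eq(f)\cap(C\times C)$ and $\Eq(f)\cap(C^c\times C^c)$) gives $\rho_C\le\Eq(f)$ and $\rho_{C^c}\le\Eq(f)$ in $\mathsf{Sub}(A\times A)$, hence $\rho=\rho_C\cup\rho_{C^c}\le\Eq(f)$, i.e.\ $f$ is trivial. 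I expect the only genuinely delicate step to be the ``only if'' direction of~(1): one must notice that the clopen subobject $B$ produced by the relation $R$ is forced to be subterminal, and that a reflexive relation on a subterminal object is automatically discrete; everything else is bookkeeping with $R$ and with the clopen calculus of Section~\ref{Section_clopen}.
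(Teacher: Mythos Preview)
Your proof is correct and follows essentially the same approach as the paper's. The only presentational differences are that, in the ``only if'' direction of~(1), you phrase the discreteness of $(B,\zeta_B)$ via the subterminality of $B$ (deducing $\Delta_B=B\times B$), whereas the paper observes directly that the inclusion $i\colon(C,\zeta_C)\to(Z,\zeta)$ factors through $(1,\Delta_1)$ and is hence trivial, which forces $\zeta_C\le\Eq(i)=\Delta_C$; and in the ``only if'' direction of~(2), you work with the specific representative $g\circ{!_A}$ of the zero morphism already identified, while the paper factors through an arbitrary trivial object $(Z,\Delta_Z)$---both routes immediately yield that $f|_C$ is trivial and the rest is identical.
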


\begin{proof}
It is clear that the terminal object $(1,\Delta_1)$ of $\PreOrd C$, which is subinitial and terminal in $\Stab$, becomes the zero object in $\Stab^*$.
Furthermore, for any trivial object $(Z,\Delta_Z)$ in $\PreOrd C^*$, we have that $(Z,\Delta_Z)$ and $(1,\Delta_1)$ are isomorphic in $\Stab$, because both $\Hom_\Stab((1,\Delta_1),(1,\Delta_1))$ and $\Hom_\Stab((Z,\Delta_Z),(Z,\Delta_Z))$ consist of one element. It follows that every trivial object of $\PreOrd C^*$ becomes (isomorphic to) the zero object in $\Stab^*$.
On the other hand, let $(Z, \zeta)$ be an object in $\PreOrd C^*$ and assume that $(Z,\zeta)\cong (1,\Delta_1)$ in $\Stab^*$. By assumption there are morphisms $\varphi\colon(Z,\zeta)\to (1, \Delta_1)$ and $\psi\colon (1,\Delta_1)\to (Z,\zeta)$ such that $\underline{\psi\varphi}=\underline{\Id Z}$ and $\underline{\varphi\psi}=\underline{\Id 1}$ in $\Stab^*$. The first equality implies that there exists a clopen subobject $C$ of $(Z,\zeta)$ such that $\psi\varphi|_C=i$, and both $\psi\varphi|_{C^c}$ and $j$ are trivial morphisms, where $i\colon C\to Z$ and $j:C^c\to Z$ are the monomorphisms representing the two subobjects. Since $j$ is trivial, it is immediate to check that $(C^c,\zeta_{C^c})\in \Disc C$. Moreover, $\psi\varphi|_C=i$ means that $i$ factors through $(1,\Delta_1)$, hence also $i$ is trivial. It follows that both $(C,\zeta_C)$ and $(C^c,\zeta_{C^c})$ are in $\Disc C$ and, by Lemma~\ref{lemma2_clopen}, $$\zeta=\zeta_C\coprod \zeta_{C^c}=\Delta_C\coprod\Delta_{C^c}=\Delta_Z.$$

Finally, assume that $\underline{f}\colon (A,\rho)\to (B, \sigma)$ is a zero morphism in $\Stab^*$. It means that there exist a trivial object $(Z,\Delta_Z)$ and morphisms $g\colon(A, \rho)\to (Z,\Delta_Z)$ and $h\colon (Z, \Delta_Z)\to (B, \sigma)$ such that $\underline{f}=\underline{h}\underline{g}$. Thus, there exists a clopen subobject $C$ of $(A,\rho_A)$ such that both $f|_{C^c}$ and $hg|_{C^c}$ are trivial and $f|_C=hg|_C$, so that, in particular, also $f|_C$ is trivial. It follows that $\rho_C$ and $\rho_{C^c}$ factor through $\Eq(f|_C)$ and $\Eq(f|_{C^c})$, respectively, hence $\rho_A\cong\rho_C\coprod \rho_{C^c}$ factors through $\Eq(f)$.
Therefore $f$ is a trivial morphism. The other implication is clear.
\end{proof}

 \section{The new definition of the stable category}\label{new-def}

In this section we are going to define an alternative stable category of the category $\mathsf{PreOrd}(\mathbb C)$ of internal preorders in a coherent category $\mathbb C$. In this alternative approach we shall consider \emph{all} the objects of $\mathsf{PreOrd}(\mathbb C)$ (not only the ones  having a ``global element''), and by suitably identifying pairs of parallel morphisms we shall again obtain a \emph{pointed} stable category. 
 When restricted to the objects with a global element, the constructions of the two stable categories will be shown to coincide (Proposition \ref{Coincidence}).

 The first step is to define the \emph{category $\mathsf{PaPreOrd}(\mathbb C)$ of partial morphisms} in $\mathsf{PreOrd}(\mathbb C)$.
 The objects of $\mathsf{PaPreOrd}(\mathbb C)$ are the internal preorders $(A, \rho)$ in $\mathbb C$. A morphism $f \colon (A,\rho) \rightarrow (B,\sigma)$ in the category $\mathsf{PaPreOrd}(\mathbb C)$ is a pair $(\alpha, f')$ displayed as 
$$\xymatrix{& (A',\rho') \ar@{>->}[dl]_{\alpha} \ar[dr]^{f'} & \\ (A, \rho) \ar@{.>}[rr]_f& & (B,\sigma) }
 $$
where $(A', \rho')$ is an internal preorder, $f'$ is a morphism in $\mathsf{PreOrd}(\mathbb C)$, and $\alpha \colon (A',\rho') \rightarrow (A,\rho)$ is a clopen subobject.
The composite $g \circ f$ in $\mathsf{PaPreOrd}(\mathbb C)$ of two morphisms $f \colon (A,\rho) \rightarrow (B,\sigma)$ and $g \colon (B,\sigma) \rightarrow (C,\tau)$ in $\mathsf{PaPreOrd}(\mathbb C)$ is defined by the external part of the following diagram
$$ \xymatrix@=15pt{ & & (A'', \rho'')  \ar[rd]^{f''} \ar@{>->}[dl]_{\alpha'} & & \\
& {(A',\rho')} \ar@{>->}[dl]_{\alpha} \ar[dr]^{f'}& &{\, \,(B',\sigma')  } \ar@{>->}[dl]_{\beta} \ar[dr]^{g'}& \\
(A,\rho) \ar@{.>}[rr]_f& & (B,\sigma) \ar@{.>}[rr]_g & &  (C,\tau) }$$
where the upper part is a pullback: in other words, $$g \circ f = (\beta, g') \circ (\alpha, f')= (\alpha \alpha', g' f''). $$
By the elementary properties of pullbacks one sees that this composition is associative, and the identity in $\mathsf{PaPreOrd}(\mathbb C)$ on a preorder $(A,\rho)$ is given by the arrow
$$\xymatrix{& (A,\rho) \ar@{=}[dl]_{1} \ar@{=}[dr]^{1} & \\ (A, \rho) \ar@{.>}[rr]_1& & (A,\rho) }
 $$
 The following observation will be useful:
 \begin{lemma}\label{Functor-I}
 There is a functor $I \colon \mathsf{PreOrd}(\mathbb C) \rightarrow \mathsf{PaPreOrd}(\mathbb C)$ which is the identity on objects and such that, for any $f \colon (A,\rho) \rightarrow (B,\sigma)$ in $\mathsf{PreOrd}(\mathbb C)$, $I(f) \colon (A,\rho) \rightarrow (B,\sigma)$ in $\mathsf{PaPreOrd}(\mathbb C)$ is defined by
 $$
 \xymatrix{& (A,\rho) \ar@{=}[dl]_{1} \ar[dr]^{f} & \\ (A, \rho) \ar@{.>}[rr]_{I(f)}& & (B,\sigma) }$$
 \end{lemma}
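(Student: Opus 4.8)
The plan is to check directly that the assignment $I$ respects identities and composition, the only two conditions required for it to be a functor; before that one must observe that $I$ is well defined. For any object $(A,\rho)$ the identity $1_{(A,\rho)}\colon (A,\rho)\to (A,\rho)$ represents a clopen subobject of $(A,\rho)$: the subobject $A$ of $A$ is complemented with complement $0$, and both openness squares are pullbacks since $0\times A\cong 0\cong A\times 0$ (the initial object being strict) and hence $(0\times A)\cap\rho=0=(A\times 0)\cap\rho$. Thus the span displayed in the statement is a legitimate morphism of $\mathsf{PaPreOrd}(\mathbb C)$, so $I$ is a well-defined function on objects (the identity) and on hom-sets.

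Preservation of identities is then immediate: by definition $I(1_{(A,\rho)})$ is the span $(A,\rho)\xleftarrow{\,1\,}(A,\rho)\xrightarrow{\,1\,}(A,\rho)$, which is exactly the identity morphism of $(A,\rho)$ in $\mathsf{PaPreOrd}(\mathbb C)$ as recalled just before the statement.

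For composition, let $f\colon (A,\rho)\to (B,\sigma)$ and $g\colon (B,\sigma)\to (C,\tau)$ be morphisms of $\mathsf{PreOrd}(\mathbb C)$; I want $I(g)\circ I(f)=I(g\circ f)$. Unravelling the definition of composition in $\mathsf{PaPreOrd}(\mathbb C)$ with $I(f)=(1_{(A,\rho)},f)$ and $I(g)=(1_{(B,\sigma)},g)$, one forms the pullback of the clopen subobject $\beta=1_{(B,\sigma)}$ along the morphism $f'=f$. Since $\beta$ is an identity this pullback is $(A,\rho)$ itself, with legs $\alpha'=1_{(A,\rho)}$ and $f''=f$; in particular the resulting subobject $\alpha\alpha'=1_{(A,\rho)}\circ 1_{(A,\rho)}=1_{(A,\rho)}$ is again clopen (trivially, or as an instance of Lemma~\ref{lemma1_clopen}(3)). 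Hence $I(g)\circ I(f)=(\alpha\alpha',\,g'f'')=(1_{(A,\rho)},\,g\circ f)$, which is precisely $I(g\circ f)$.

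The verification is entirely routine and I do not expect any genuine obstacle: the one point worth isolating is that, in $\mathsf{PaPreOrd}(\mathbb C)$, pulling back along an identity leaves a span unchanged, which is what collapses the defining pullback of $I(g)\circ I(f)$ to the span with legs $1_{(A,\rho)}$ and $g\circ f$. One may additionally remark that $I$ is faithful, since a morphism of $\mathsf{PreOrd}(\mathbb C)$ is recovered as the right leg of its image span, but this is not needed for the statement.
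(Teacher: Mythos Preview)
Your proof is correct; in fact the paper states this lemma without proof, treating it as a routine observation, and your direct verification that $I$ preserves identities and composition (using that pulling back along an identity leaves a span unchanged) is exactly the expected argument.
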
 
\noindent { \bf Convention.}
 From now on, we shall often write $A$ instead of $(A,\rho)$ to denote an internal preorder, thus dropping the relation $\rho$. Furthermore, an arrow $\xymatrix{\ar@{>->}[r] &  }$ will always denote a clopen subobject, whereas an arrow $\xymatrix{A \ar[r] & B }$ will be a morphism of preorders.

 Note that the ``intuition'' here should be that a diagram
 $$\xymatrix{& {\, \, \, \, A'} \ar@{>->}[dl]_{\alpha} \ar[dr]^{f'} & \\ A \ar@{.>}[rr]_f& & B}
 $$
``represents'' a morphism $f$ whose restriction on the clopen subobject $(A',\rho')$ of $(A,\rho)$ is $f'$, and that is ``trivial'' on the complement of $(A',\rho')$ in $(A,\rho)$.

 \begin{remark}
 The category $ \mathsf{PaPreOrd}(\mathbb C)$ is equipped with a natural ideal of morphisms $\mathcal N$ in the sense of Ehresmann \cite{Ehr}. Indeed, in a coherent category the initial object $0$ is strict: therefore, if we define $\mathcal N$ to be the class of morphisms in $\mathsf{PaPreOrd}(\mathbb C)$ of the form 
 $$\xymatrix{& {\, \, 0 } \ar@{>->}[dl]_{} \ar[dr]^{} & \\ B \ar@{.>}[rr]_0& & C  }
 $$
 one easily sees that both $0 \circ f \colon (A,\rho) \rightarrow (C,\tau) $ and $g \circ 0 \colon (B,\sigma) \rightarrow (D,\psi)$ are again in $\mathcal N$ for any morphism $f \colon (A,\rho) \rightarrow (B,\sigma)$ and $g \colon (C,\tau) \rightarrow (D, \psi)$ in $ \mathsf{PaPreOrd}(\mathbb C)$.
 \end{remark}

 Starting from the category $\mathsf{PaPreOrd}(\mathbb C)$ of internal preorders and partial morphisms we are now going to define a quotient category of $\mathsf{PaPreOrd}(\mathbb C)$, that we shall call the \emph{stable category}. In the special case when $\mathbb C$ is the category of sets we shall extend the same construction as in the article \cite{FF} of Facchini and Finocchiaro, already presented in Section \ref{Section_stable_category} in the more general context of coherent categories.
 \begin{definition}
 A \emph{congruence diagram} in $\mathsf{PreOrd}(\mathbb C)$ is a diagram of the form
\begin{equation}\label{C-diagram}  
\xymatrix@=35pt{
{ {A_0^{1}}^c\,\, } \ar@{>->}[rr]^{{\alpha_0^1}^c} & & {\, \, A_1}  \ar@{>->}[ld]^{\alpha_1} \ar[dr]^{f_1}  & & \\
 {A_0 \,\, } \ar@{>->}[drr]_{\alpha_0^2} \ar@{>->}[rru]^{\alpha_0^1 } \ar@{>->}[r]_{\alpha_0} & A   & &B \\
 { {A_0^{2}}^c\,\, } \ar@{>->}[rr]_{{\alpha_0^2}^c}  & & A_2 \ar@{>->}[ul]_{\alpha_2} \ar[ru]_{f_2} & &
 }
\end{equation}
where:
 \begin{itemize}
 \item the two triangles commute;
 \item the clopen subobject $\xymatrix{{{A_0^{i}}^c\, \, }  \ar@{>->}[r]^{{\alpha_0^i}^c} & A_i }$ is the complement in $A_i$ of the clopen subobject $\xymatrix{{{A_0} \,\, } \ar@{>->}[r]^{{\alpha_0^i}} & A_i,}$ for $i=1,2$;
 \item  $f_1  \alpha_0^1 = f_2 \alpha_0^2$;
 \item each $f_i {\alpha_0^i}^c $ is a trivial morphism.
 \end{itemize}

 Given two parallel morphisms $(\alpha_1, f_1)$ and $(\alpha_2, f_2)$ in $\mathsf{PaPreOrd}(\mathbb C)$, depicted as
$$
\xymatrix{
\ar@{}[drrrrrrr]|{\mbox{and}} & {\, \, \, \, A_1} \ar@{>->}[dl]_{\alpha_1} \ar[dr]^{f_1} & & & & & {\, \, \, \, A_2} \ar@{>->}[dl]_{\alpha_2} \ar[dr]^{f_2} & \\
 A \ar@{.>}[rr]_{} & & B & & & A \ar@{.>}[rr]_{}& & B,
 }
$$
 one says that they are \emph{equivalent}, and writes $(\alpha_1, f_1) \sim (\alpha_2, f_2)$, if there is a congruence diagram of the form \eqref{C-diagram}.
\end{definition}

 \begin{Prop}\label{cong} 
 The relation $\sim$ defined above is an equivalence relation which is also compatible with the composition in $\mathsf{PaPreOrd}(\mathbb C)$, and is then a \emph{congruence} on the category $\mathsf{PaPreOrd}(\mathbb C)$.
 \end{Prop}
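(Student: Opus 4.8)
The plan is to verify the three properties of a congruence in the sense of Mac Lane separately: reflexivity and symmetry, transitivity, and compatibility with composition on both sides.

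\medskip

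\noindent\textbf{Reflexivity and symmetry.} Reflexivity is immediate: for any partial morphism $(\alpha_1,f_1)\colon A\to B$, take $A_0=A_1$, $\alpha_0=\alpha_1$, and $\alpha_0^1=\alpha_0^2=1_{A_1}$; then ${A_0^i}^c\cong 0$ and the condition that $f_i{\alpha_0^i}^c$ be trivial holds vacuously (it is a morphism out of $0$, which lies in the ideal $\mathcal N$ since $0$ is strict). Symmetry is built into the definition: a congruence diagram \eqref{C-diagram} witnessing $(\alpha_1,f_1)\sim(\alpha_2,f_2)$ is, after swapping the roles of the indices $1$ and $2$, a congruence diagram witnessing $(\alpha_2,f_2)\sim(\alpha_1,f_1)$.

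\medskip

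\noindent\textbf{Transitivity.} Suppose $(\alpha_1,f_1)\sim(\alpha_2,f_2)$ via a common clopen subobject $A_0\rightarrowtail A_1$ and $A_0\rightarrowtail A_2$ (inside $A$), and $(\alpha_2,f_2)\sim(\alpha_3,f_3)$ via $A_0'\rightarrowtail A_2$ and $A_0'\rightarrowtail A_3$. The natural candidate witnessing $(\alpha_1,f_1)\sim(\alpha_3,f_3)$ is the subobject $A_0\cap A_0'$ of $A$, which is clopen by Lemma~\ref{lemma1_clopen}(2), together with its induced images in $A_1$ and $A_3$. One must check: (i) $f_1$ and $f_3$ agree on $A_0\cap A_0'$ — this follows since $f_1=f_2$ on $A_0\supseteq A_0\cap A_0'$ and $f_2=f_3$ on $A_0'\supseteq A_0\cap A_0'$; (ii) $f_1$ restricted to the complement of $A_0\cap A_0'$ in $A_1$ is trivial. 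For (ii), write this complement as a disjoint union of two clopen pieces, ${A_0^1}^c$ (where $f_1$ is trivial by hypothesis) and a piece mapping, under the identification $A_0\cong$ its image in $A_2$, into ${A_0'}^c\cap A_2$-side where $f_2$ is trivial and $f_1=f_2$; using Corollary~\ref{remark_disjoint_clopen} one splits the restricted relation as a coproduct of the two restricted relations, each of which factors through the relevant kernel pair, and concludes by the universal property of the coproduct (exactly the argument used in the proof of the transitivity of $R_{A,A'}$ in Section~\ref{Section_stable_category}). The analogous statement for $f_3$ is symmetric.

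\medskip

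\noindent\textbf{Compatibility with composition.} Here one must show that if $(\alpha_1,f_1)\sim(\alpha_2,f_2)\colon A\to B$ then $(\beta,g)\circ(\alpha_1,f_1)\sim(\beta,g)\circ(\alpha_2,f_2)$ for any $(\beta,g)\colon B\to C$, and similarly $(\alpha_1,f_1)\circ(\gamma,h)\sim(\alpha_2,f_2)\circ(\gamma,h)$ for any $(\gamma,h)\colon D\to A$; the general case then follows. For post-composition, one pulls the clopen subobjects $A_0\rightarrowtail A_i$ and the domain $B'\rightarrowtail B$ of $(\beta,g)$ back along $f_i$; the new witnessing clopen subobject is the pullback $f_1^{-1}(B')\cap A_0 \cong f_2^{-1}(B')\cap A_0$, which is clopen since inverse images and intersections of clopens are clopen (Lemma~\ref{lemma1_clopen}). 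The triviality conditions are preserved because the ideal $\mathcal Z$ is closed under composition with arbitrary morphisms on either side (it is an ideal of morphisms in the sense of Ehresmann), so $g\,f_i$ restricted to the complement is a composite with a trivial factor, hence trivial. For pre-composition, one pulls the data of the congruence diagram back along $h$ and uses that $f_i\,h$ restricted to $h^{-1}$ of the bad locus is $f_i$ (restricted) composed with $h$, which is trivial because $\mathcal Z$ is an ideal. Throughout one uses that pullback functors between subobject lattices preserve complements, unions and intersections (Remark~\ref{complementi} and Lemma~\ref{lemma1_clopen}).

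\medskip

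\noindent The main obstacle is the compatibility with composition: unwinding the definition of composition in $\mathsf{PaPreOrd}(\mathbb C)$ (which itself involves a pullback of clopen subobjects) and checking that the pulled-back diagram is genuinely a congruence diagram — in particular that the relevant subobject really is the complement of the relevant clopen, and that the triviality conditions transfer — requires a careful but routine diagram chase. The transitivity argument is the second most delicate point, but it is essentially the one already carried out for $R_{A,A'}$ in Section~\ref{Section_stable_category}, transposed into the language of partial morphisms.
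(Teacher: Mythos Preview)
Your proposal is correct and follows essentially the same route as the paper: the same witness $A_0\cap A_0'$ for transitivity (with the complement split into the piece where $f_1$ is trivial by hypothesis and the piece inside $A_0$ where $f_1=f_2$ and $f_2$ is trivial), and the same pullback-along-$f_i$ (resp.\ along $h$) construction for post- (resp.\ pre-) composition, using clopen stability (Lemma~\ref{lemma1_clopen}) and the ideal property of $\mathcal Z$. One minor slip: in reflexivity the relevant triviality is $\mathcal Z$-triviality in $\mathsf{PreOrd}(\mathbb C)$ (which holds since $0$ is a trivial object), not membership in the ideal $\mathcal N$ of $\mathsf{PaPreOrd}(\mathbb C)$.
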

 \begin{proof}
 It is clear that the relation $\sim$ is symmetric. To see that it is reflexive it suffices to choose $A_0=A_1$ in diagram \eqref{C-diagram}, so that ${A_0^{1}}^c=0$.
To see that $\sim$ is transitive consider three parallel morphisms in $\mathsf{PaPreOrd}(\mathbb C)$ such that $(\alpha_1, f_1) \sim (\alpha_2, f_2)$ and $(\alpha_2, f_2) \sim (\alpha_3, f_3)$. There are then a congruence diagram \eqref{C-diagram} and a congruence diagram 

\begin{equation}\label{C2-diagram}  
\xymatrix@=35pt{{ {A_4^{2}}^c\,\, } \ar@{>->}[rr]^{{\alpha_4^2}^c} & & {\, \, A_2}  \ar@{>->}[ld]^{\alpha_2} \ar[dr]^{f_2}  & & \\
 {A_4 \,\, } \ar@{>->}[drr]_{\alpha_4^3} \ar@{>->}[rru]^{\alpha_4^2 } \ar@{>->}[r]_{\alpha_4} & A   & &B \\
 { {A_4^{3}}^c\,\, } \ar@{>->}[rr]_{{\alpha_4^3}^c}  & & A_3 \ar@{>->}[ul]_{\alpha_3} \ar[ru]_{f_3} & &
}
\end{equation}

This implies at once  that $f_1$, $f_2$ and $f_3$ coincide on $A_0\cap A_4$. It remains to prove that $f_1$ is trivial on the  complement of $A_0\cap A_4$ in $A_0$ and $f_3$ is trivial on the complement of $A_0\cap A_4$ in $A_3$. We treat the case of $f_1$: that of $f_3$ is analogous. First we know that $f_1$ is trivial on $A_0^{1^c }$, the complement of $A_0$ in $A_1$. It remains to prove that $f_1$ is trivial as well on the complement of $A_0\cap A_4$ in $A_0$.  But on $A_0$, $f_1$ and $f_2$ coincide; so it is equivalent to prove that $f_2$ is trivial on the complement of $A_0\cap A_4$ in $A_0$. Indeed, $f_2$ is trivial on $A_0^{2^c}$, the complement of $A_0$ in $A_2$, as well as on $A_4^{2^c}$, the complement of $A_4$ in $A_2$. Thus $f_2$ is trivial on the union of these two complements, which is the complement of $A_0\cap A_4$ in $A_2$. So $f_2$, and thus $f_1$, is trivial on the complement of $A_0\cap A_4$ in $A_0$, since $A_0$ is contained in $A_2$. This proves the transitivity.

Let us then show that the equivalence relation $\sim$ is compatible with the composition. Let us first prove that, given a morphism $$\xymatrix{& {\, \, \, \, B'} \ar@{>->}[dl]_{\beta} \ar[dr]^{g} & \\ B \ar@{.>}[rr]_{}& & C}
 $$
and two parallel morphisms $( \alpha_1, f_1) \colon A \rightarrow B$ and $( \alpha_2, f_2) \colon A \rightarrow B$ in $\mathsf{PaPreOrd}(\mathbb C)$ (as above) such that $( \alpha_1, f_1) \sim ( \alpha_2 , f_2)$, then $$ ( \beta, g) \circ ( \alpha_1, f_1) \sim ( \beta, g) \circ ( \alpha_2, f_2) .$$
By using the same notations as above for the congruence diagram \eqref{C-diagram} making $( \alpha_1, f_1)$ and $(\alpha_2 , f_2)$ equivalent, one can consider the composition diagram
$$
\xymatrix@=20pt{
{A_0^{1}}^c\cap \tilde{A_1} \, \,  \ar@{>->}[rr] & &{\, \tilde{A_1} }  \ar@{>->}[d] \ar[ddrr]^{\tilde{f_1}} & & & \\
& &  {\, A_1} \ar[dr]^{f_1} \ar@{>->}[dl]_{\alpha_1}  & & & \\
A_0 \cap \tilde{A_1} \cap \tilde{A_2} \, \, \ar@{>->}[r] \ar@{>->}[rruu]  \ar@{>->}[ddrr] & A \ar@<-1ex>@{.>}[rr] \ar@<1ex>@{.>}[rr] & &B & {\, B'} \ar[r]^g \ar@{>->}[l]_\beta& C \\
& & A_2 \ar[ru]_{f_2}  \ar@{>->}[lu]^{\alpha_2} & & & \\
{A_0^{2}}^c \cap \tilde{A_2} \, \,   \ar@{>->}[rr]  & & \tilde{A_2} \ar@{>->}[u] \ar[rruu]_{\tilde{f_2}} & & &
}
$$
where the two right-hand quadrangles are pullbacks by construction (so that $ \tilde{A_1} = {f_1}^{-1}(B')$ and $ \tilde{A_2} = {f_2}^{-1}(B')$). Note that the equality 
$f_1 \alpha_0^1 = f_2 \alpha_0^2$ implies that $$A_0 \cap \tilde{A_1}  = A_0 \cap \tilde{A_2} = A_0 \cap \tilde{A_1} \cap \tilde{A_2},$$ 
hence $f_1$ and $f_2$ coincide on $A_0 \cap \tilde{A_1} \cap {\tilde{A_2}}$. The definition of ${A_0^{1}}^c $ and the fact that the squares in the diagram
$$
\xymatrix@=35pt{   {{A_0} \cap \tilde{A_1}  \,  }\ar@{>->}[r]  \ar@{>->}[d]   & \tilde{A_1} \ar@{>->}[d] & {\, \tilde{A_1} \cap  {A_0^{1}}^c } \ar@{>->}[l]  \ar@{>->}[d]    \\
{A_0 \, \, } \ar@{>->}[r] & A_1 & {\, \, {A_0^{1}}^c }\ar@{>->}[l]}
$$
are pullbacks (by construction) imply that the complement of ${A_0} \cap \tilde{A_1} $ in $\tilde{A_1}$ is $\tilde{A_1} \cap  {A_0^{1}}^c$ (since the pullback functor preserves complements). The restriction of $f_1$ to 
${A_0^{1}}^c$
is a trivial morphism, hence so is the restriction of $\tilde{f_1}$ to ${A_0^1}^c \cap \tilde{A _1}$. We then conclude that also the restriction of $g \tilde{f_1}$ to ${A_0^1}^c \cap \tilde{A _1}$ is trivial. Similarly one checks that the restriction of $g \tilde{f_2}$ to ${A_0^2}^c \cap \tilde{A _2}$ is trivial, so that one concludes that $ ( \beta, g) \circ ( \alpha_1, f_1) \sim ( \beta, g) \circ ( \alpha_2, f_2).$

Next consider 
$$\xymatrix{& {\, \, \, \, D} \ar@{>->}[dl]_{\gamma} \ar[dr]^{g} & \\ C \ar@{.>}[rr]_{}& & A}
 $$
 and two morphisms $( \alpha_1, f_1) \colon A \rightarrow B$ and $( \alpha_2, f_2) \colon A \rightarrow B$ in $\mathsf{PaPreOrd}(\mathbb C)$ such that $( \alpha_1, f_1)  \sim  ( \alpha_2, f_2)$ with congruence diagram \eqref{C-diagram}.
 Consider the
 pullback
 $$
 \xymatrix{{C_0 \, \, }\ar@{>->}[r] \ar[d] & D \ar[d]^g \\
{ A_0\, \, } \ar@{>->}[r]_{\alpha_0} & A
 }
 $$ 
 and the diagram 
 $$
 \xymatrix@=40pt{  {{C_0^1}^c \, \, } \ar@{>->}[r] \ar[d]_{\overline{g}_1 } & {D_0^1\, \, } \ar@{>->}[r]^{\gamma_1} \ar[d]^{g_1} & D \ar[d]^g& {\, \, D_0^2} \ar@{>->}[l]_{\gamma_2}  \ar[d]^{g_2}& {\, \, {C_0^2}^c} \ar@{>->}[l] \ar[d]^{\overline{g}_2}\\ {{A_0^1}^c \, \, } \ar@{>->}[r] & {A_1\, \, }   \ar@{>->}[r]_{\alpha_1}& A &{\, \, A_2} \ar@{>->}[l]^{\alpha_2}   & {\, \, {A_0^2}^c} \ar@{>->}[l] 
 }
 $$
 where all the squares are pullbacks, ${A_0^i}^c$ is the complement of $C_0$ in $A_i$ and ${C_0^i}^c$ is the complement of $C_0$ in $D_0^i$ (for $i \in \{1,2\}$).
 It is then clear that $f_1 g_1$ and $f_2 g_2$ coincide on $C_0$ and, moreover, $f_i g_i$ are trivial morphisms when restricted to ${C_0^i}^c$ (since $f_i$ is trivial on ${A_0^i}^c$). It follows that 
 $$ ( \alpha_1, f_1) \circ ( \gamma, g)  \sim ( \alpha_2, f_2) \circ  ( \gamma, g),$$
 as desired.
 \end{proof}
 \begin{definition}
 We denote by $\mathsf{Stab}(\mathbb C)$ the quotient category of $\mathsf{PaPreOrd}(\mathbb C)$ by the congruence $\sim$ in Proposition \ref{cong}. If we write $\pi \colon \mathsf{PaPreOrd}(\mathbb C) \rightarrow \mathsf{Stab}(\mathbb C)$ for the quotient functor, we also have a functor $$\Sigma = \pi \circ I \colon \mathsf{PreOrd}(\mathbb C) \rightarrow \mathsf{Stab}(\mathbb C)$$ obtained by precomposing $\pi$ with the functor $I \colon \mathsf{PreOrd}(\mathbb C) \rightarrow \mathsf{PaPreOrd}(\mathbb C)$ in Lemma \ref{Functor-I}
 \end{definition}
 
{ \bf Convention.} From now on we shall use the notation $< \alpha, f > \colon A \rightarrow B$ for the morphism $\pi (\alpha, f) \colon A \rightarrow B$ in $\mathsf{Stab}(\mathbb C)$ (which is an equivalence class of morphisms in $\mathsf{PaPreOrd}(\mathbb C)$ by definition).
 
 \begin{lemma}\label{zero}
 The category $\mathsf{Stab}(\mathbb C)$ is pointed with zero object given by the initial object $0$ of $\mathsf{PreOrd}(\mathbb C)$.
 \end{lemma}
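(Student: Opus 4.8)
The plan is to establish the slightly stronger fact that the initial object $(0,\Delta_0)$ of $\mathsf{PreOrd}(\mathbb C)$ is already a zero object of the intermediate category $\mathsf{PaPreOrd}(\mathbb C)$, and then to transport this property along the quotient functor $\pi$. That $(0,\Delta_0)$ is initial in $\mathsf{PreOrd}(\mathbb C)$ is immediate, since for any $(B,\sigma)$ the two unique $\mathbb C$-arrows $0\to B$ and $0\to\sigma$ assemble into the unique morphism $(0,\Delta_0)\to(B,\sigma)$. To see that $0$ is also initial in $\mathsf{PaPreOrd}(\mathbb C)$, recall that a morphism $0\to B$ there is a span $0 \hookleftarrow A' \to B$ whose left leg is a clopen subobject; since the initial object of a coherent category is strict, any monomorphism into $0$ forces $A'\cong 0$, so the left leg is the unique subobject $0\rightarrowtail 0$ and the right leg is the unique morphism out of $(0,\Delta_0)$ in $\mathsf{PreOrd}(\mathbb C)$. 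Hence there is exactly one morphism $0\to B$.

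For terminality, I would first record that $0$ is a clopen subobject of every object $(A,\rho)$: its complement is $A$ itself, since $0\cap A\cong 0$ and $0\cup A\cong A$ in $\mathsf{Sub}(A)$, and since $A\times 0\cong 0\cong 0\times A$ by strictness, the two pullback conditions $(A\times 0)\cap\rho = 0$ and $(0\times A)\cap\rho = 0$ that define ``clopen'' hold trivially. Now a morphism $A\to 0$ in $\mathsf{PaPreOrd}(\mathbb C)$ is a span $A \hookleftarrow A' \to 0$ with clopen left leg, and its right leg is in particular a $\mathbb C$-morphism $A'\to 0$; so strictness again forces $A'\cong 0$, the left leg is necessarily the clopen subobject $0\rightarrowtail A$ just described, and the right leg is the identity of $(0,\Delta_0)$. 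Thus there is exactly one morphism $A\to 0$, and $0$ is a zero object of $\mathsf{PaPreOrd}(\mathbb C)$.

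Finally, since $\pi\colon\mathsf{PaPreOrd}(\mathbb C)\to\mathsf{Stab}(\mathbb C)$ is the identity on objects and every hom-set of $\mathsf{Stab}(\mathbb C)$ is, by construction, a quotient of the corresponding hom-set of $\mathsf{PaPreOrd}(\mathbb C)$, the sets of morphisms $0\to B$ and $A\to 0$ in $\mathsf{Stab}(\mathbb C)$ remain singletons; hence $0$ is simultaneously initial and terminal, i.e. a zero object, in $\mathsf{Stab}(\mathbb C)$, which is therefore pointed. The only step requiring any attention is the proof of terminality, where one has to combine strictness of $0$ — which collapses the ``domain of definition'' of any partial morphism targeting $0$ down to $0$ itself — with the observation that $0$ sits inside every preorder as a clopen subobject; both ingredients are entirely elementary, so I do not anticipate a genuine obstacle.
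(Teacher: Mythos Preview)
Your proof is correct and follows essentially the same idea as the paper's: both rely on the strictness of $0$ in a coherent category to pin down the unique spans $A\dashrightarrow 0$ and $0\dashrightarrow B$. The paper's argument is terser---it simply displays the two spans without further comment---whereas you make the reasoning explicit and additionally observe that $0$ is already a zero object in $\mathsf{PaPreOrd}(\mathbb C)$ before passing to the quotient; this is a mild strengthening but not a genuinely different route.
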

 \begin{proof}
 Given objects $A$ and $B$ the unique morphisms in $\mathsf{Stab}(\mathbb C)$ from $A$ to $0$ and from $0$ to $B$ are 
 $$\xymatrix{
\ar@{}[drrrrrrr]|{\mbox{and}} & {\, \,  0} \ar@{>->}[dl]_{} \ar@{=}[dr]^{} & & & & & { \, \, 0} \ar[dr]_{} \ar@{=}[dl]_{} & \\
 A \ar@{.>}[rr]_{\omega_A}& & 0 & & & 0 \ar@{.>}[rr]_{\alpha_B}& & B,
 }
 $$
 respectively. We shall keep these notations for these particular morphisms all throughout the paper. \end{proof}
 
 \begin{lemma}
 For an object $A \in \mathsf{PreOrd}(\mathbb C)$ the following conditions are equivalent:
 \begin{enumerate}
 \item the preorder $A$ is a trivial object;
 \item $\Sigma (A) \cong 0$.
 \end{enumerate}
 \end{lemma}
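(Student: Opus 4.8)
The plan is to prove the equivalence directly, unwinding the definitions of trivial object, of the congruence $\sim$ on $\mathsf{PaPreOrd}(\mathbb C)$, and of the zero object $0$ identified in Lemma~\ref{zero}. Recall that $\Sigma(A)$ is $A$ itself as an object of $\mathsf{Stab}(\mathbb C)$, so $\Sigma(A)\cong 0$ means that in $\mathsf{PaPreOrd}(\mathbb C)$ the composite $\alpha_A\circ\omega_A$ is $\sim$-equivalent to $\Id A$ (the other composite $\omega_A\circ\alpha_A$ is automatically the identity of $0$, since $\Hom(0,0)$ is a singleton by strictness of the initial object). Everything therefore reduces to understanding when $\Id A \sim (\text{the zero partial morphism } A\to A)$, i.e. the partial morphism $(0\rightarrowtail A,\ 0\to A)$.

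For the implication $(1)\Rightarrow(2)$: if $A=(A,\Delta_A)$ is a discrete equivalence relation, I would exhibit an explicit congruence diagram of the form \eqref{C-diagram} witnessing $\Id A\sim (0\rightarrowtail A,\,0\to A)$. Take $A_1=A$ with $\alpha_1=\Id A$ and $f_1=\Id A$, take $A_2=0$ with $\alpha_2\colon 0\rightarrowtail A$ and $f_2\colon 0\to A$, and choose the common clopen subobject $A_0=0$. Then ${A_0^1}^c=A$ and ${A_0^2}^c=0$; the triangles commute trivially, $f_1\alpha_0^1=f_2\alpha_0^2$ holds because both have domain $0$, and the conditions ``$f_i{\alpha_0^i}^c$ trivial'' amount to: $f_2$ restricted to $0$ is trivial (clear, it factors through $0\in\mathcal Z$) and $f_1$ restricted to $A$, i.e. $\Id_{(A,\Delta_A)}$, is trivial — which holds precisely because $(A,\Delta_A)$ is a discrete equivalence relation, so $\Id A$ factors through $(A,\Delta_A)\in\mathcal Z$. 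Hence $\Sigma(A)\cong 0$.

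For the implication $(2)\Rightarrow(1)$: assume $\Id A \sim (0\rightarrowtail A,\,0\to A)$, with a congruence diagram \eqref{C-diagram} in which, say, $(\alpha_1,f_1)=(\Id A,\Id A)$ and $(\alpha_2,f_2)=(0\rightarrowtail A,\,0\to A)$. The data provide a clopen subobject $A_0$ of $A$ (the common one, coming as a clopen subobject of both $A_1=A$ and $A_2=0$); since $A_2=0$ and $0$ is strict, $A_0\cong 0$, so ${A_0^1}^c\cong A$. The last bullet of the definition of congruence diagram forces $f_1{\alpha_0^1}^c$ to be trivial; but ${\alpha_0^1}^c\colon A\rightarrowtail A$ is (the complement of $0$ in $A$, hence) an isomorphism, and $f_1=\Id A$, so this says exactly that $\Id_{(A,\rho)}$ is a trivial morphism, i.e. it factors through some discrete equivalence relation. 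By Lemma~\ref{Prop_prekernel} (or directly), $\Id A$ trivial means $\rho\subseteq\Eq(\Id A)=\Delta_A$, whence $\rho=\Delta_A$ and $A$ is a trivial object. One should also treat the symmetric labelling of the congruence diagram, but it is handled the same way.

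The main obstacle I expect is purely bookkeeping: making sure the ``common clopen subobject'' in the congruence diagram is correctly identified and that the strictness of $0$ is invoked at the right spot to collapse $A_0$ to $0$ and ${A_0^1}^c$ to (a subobject isomorphic to) $A$. One subtlety worth double-checking is that $\Sigma(A)\cong 0$ in $\mathsf{Stab}(\mathbb C)$ really does reduce to $\Id A\sim(0\rightarrowtail A,0\to A)$ — this uses that there is a unique morphism $A\to 0$ and a unique $0\to A$ in $\mathsf{Stab}(\mathbb C)$ (by Lemma~\ref{zero}) and that an isomorphism to a zero object is equivalent to the identity on the domain being the zero endomorphism. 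Once these points are pinned down, the argument is short and the only real content is the characterization ``$\Id A$ is $\mathcal Z$-trivial $\iff$ $\rho=\Delta_A$'', which is immediate from the definition of $\mathcal Z$ and of trivial morphism.
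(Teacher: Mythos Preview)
Your proof is correct and follows essentially the same approach as the paper: both directions reduce to analyzing a congruence diagram between $\Id A$ and the zero partial morphism $(0\rightarrowtail A,\,0\to A)$, use strictness of $0$ to force $A_0\cong 0$ and hence ${A_0}^c\cong A$, and then conclude that $\Id_A$ is $\mathcal Z$-trivial iff $A$ is trivial. The only cosmetic difference is that the paper phrases the last step as ``$A$ is a retract of a trivial object, hence trivial'', while you argue directly that $\rho\le\Eq(\Id A)=\Delta_A$; these are equivalent and equally immediate.
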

 \begin{proof}
$ (1) \Rightarrow (2)$ If $A$ is a trivial object, then we have the following congruence diagram:
$$\xymatrix@=25pt{
{0 \,\, } \ar@{=}[rrr]& & & {\, \,0}  \ar@{=}[ld]^{} \ar@{=}[dr]^{}  & & & \\
&& {\,\, 0} \ar@{>->}[dl] \ar@{=}[dr]^{} &  & 0 \ar@{=}[dl] \ar[dr] & & \\
 {0 \,\, } \ar@{>->}[drr]_{} \ar@{=}@/^/@<0.5ex>[rruur]^{} \ar@{>->}[r]_{} & A   \ar@{.>}[rr]^{\omega_A} & & 0 \ar@{.>}[rr]^{\alpha_A}  && A\\
 { A \,\, } \ar@{=}[rr]_{}  & & A \ar@{=}[ul]_{}  \ar@{=}@/_/@<0.3ex>[urrr]_{} &  & & & 
 } $$
where the upper part of the diagram is the composite of the unique arrow  $\omega_ A \colon A \rightarrow 0$ followed by the unique arrow $\alpha_A \colon 0 \rightarrow A$ from Lemma \ref{zero}.
It follows that $\alpha_A \circ \omega_A = 1_A$ in $\mathsf{Stab}(\mathbb C)$. Since obviously $\omega_0 \circ \alpha_0 = 1_0$, it follows that $A \cong 0$ in $\mathsf{Stab}(\mathbb C)$.

Conversely, let us assume that $\alpha_A \circ \omega_A = 1_A$ in $\mathsf{Stab}(\mathbb C)$, so that we have the congruence diagram

$$\xymatrix@=25pt{
{A_1 \,\, } \ar@{>->}[rrr]& & & {\, \,0}  \ar@{=}[ld]^{} \ar@{=}[dr]^{}  & & & \\
&&  {\,\, 0} \ar@{>->}[dl]  \ar@{=}[dr]^{} &  & 0 \ar@{=}[dl] \ar[dr] & & \\
 {A_0 \,\, } \ar@{>->}[drr]_{} \ar@{>->}@/^/@<0.5ex>[rruur]^{} \ar@{>->}[r]_{} & A   \ar@{.>}[rr]^{\omega_A} & & 0 \ar@{.>}[rr]^{\alpha_A}  && A\\
 { A_2 \,\, } \ar@{>->}[rr]_{}  & & A \ar@{=}[ul]_{}  \ar@{=}@/_/@<0.3ex>[urrr]_{} &  & & & 
 } $$
Since the initial object $0$ is strict, $A_0= 0= A_1$ and therefore $A_2 = A$, showing that $1_A$ is trivial in $\mathsf{PreOrd}(\mathbb C)$. There is then a factorization of $1_A$ through a trivial object, hence $A$ is a retract of a trivial object and is then itself trivial.
 \end{proof}
 \begin{Prop}\label{trivial-versus-zero}
 For a morphism $f \colon A \rightarrow B$ in $\mathsf{PreOrd}(\mathbb C)$ the following conditions are equivalent:
 \begin{enumerate}
 \item $f$ is a trivial morphism;
 \item $\Sigma (f)$ is a zero morphism in $\mathsf{Stab}(\mathbb C)$.
 \end{enumerate}
 \end{Prop}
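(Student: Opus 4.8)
The plan is to prove the equivalence $f \text{ trivial} \iff \Sigma(f) = 0$ by unravelling both sides in terms of the definitions just established. Recall that $\Sigma(f) = \pi(I(f))$, where $I(f)$ is the partial morphism $(1_A, f) \colon A \to B$, and that the zero morphism $A \to B$ in $\mathsf{Stab}(\mathbb C)$ is the class of the partial morphism $(\alpha_B \circ \omega_A)$, which — computing the composite via the pullback of $0 \rightarrowtail B$ along $\omega_A \colon A \to 0$ — is represented by the partial morphism $(0 \rightarrowtail A, 0 \to B)$ whose domain is the initial object. So the statement $\Sigma(f) = 0$ says precisely that $(1_A, f) \sim (0 \rightarrowtail A, 0 \to B)$ in $\mathsf{PaPreOrd}(\mathbb C)$, i.e. there is a congruence diagram \eqref{C-diagram} with $A_1 = A$, $f_1 = f$, and $A_2 = 0$.

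For $(1) \Rightarrow (2)$: suppose $f$ is trivial. I would build the congruence diagram by taking $A_0 = A$ (so that ${A_0^1}^c = 0$ as the complement of $A$ in $A$), $A_1 = A$ with $\alpha_1 = 1_A$ and $f_1 = f$, and $A_2 = 0$ with $\alpha_2 = 0 \rightarrowtail A$ and $f_2 = 0 \to B$; then ${A_0^2}^c = 0$ (the complement of $A$ in... wait, rather $A_0 = A$ sits inside $A_2 = 0$ only if $A = 0$). This needs a small adjustment: instead take $A_0 = 0$, $\alpha_0^1 \colon 0 \rightarrowtail A_1 = A$, ${A_0^1}^c = A$ with $f_1 = f$ required trivial on all of $A$ — which holds by hypothesis — and $A_2 = 0$, $\alpha_0^2 = 1_0$, ${A_0^2}^c = 0$, $f_2 = 0 \to B$; the conditions $f_1 \alpha_0^1 = f_2 \alpha_0^2$ (both maps out of $0$) and the triviality of $f_1 {\alpha_0^1}^c = f$ and $f_2 {\alpha_0^2}^c = 0$ are all satisfied. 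This exhibits $(1_A, f) \sim (\alpha_B \omega_A)$, hence $\Sigma(f) = 0$.

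For $(2) \Rightarrow (1)$: suppose $\Sigma(f) = 0$, so there is a congruence diagram relating $(1_A, f)$ and the zero partial morphism. Thus there is a clopen $A_0 \rightarrowtail A$ on which $f$ agrees with the composite $A_0 \to 0 \to B$ — meaning $f|_{A_0}$ is trivial (it factors through $0$, which is a trivial object since $0 \cong \Delta_0$) — while $f|_{A_0^c}$ is trivial by the last bullet of the congruence diagram definition. Since $A_0$ is clopen, Lemma~\ref{lemma2_clopen} gives $\rho \cong \rho_{A_0} \coprod \rho_{A_0^c}$; as both $f|_{A_0}$ and $f|_{A_0^c}$ factor through $\mathrm{Eq}$ of the respective restrictions, the relation $\rho$ factors through $\mathrm{Eq}(f)$, exactly as in the final paragraph of the proof of Proposition~\ref{trivial objects}. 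Hence $f$ is a trivial morphism.

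The main obstacle I anticipate is purely bookkeeping rather than conceptual: one must be careful in $(2) \Rightarrow (1)$ that the composite $\alpha_B \circ \omega_A$ really is represented, up to the congruence $\sim$, by a partial morphism with initial domain, so that "agreeing with it on $A_0$" genuinely forces $f|_{A_0}$ to factor through a trivial object; and in $(1)\Rightarrow(2)$ one must pick the domains/complements in \eqref{C-diagram} consistently (taking $A_0 = 0$ is the clean choice, since $0$ is a subobject of everything and its complement in any $A_i$ is $A_i$ itself by strictness). Both directions are then immediate from Lemma~\ref{lemma2_clopen} and the characterization of trivial morphisms via the kernel pair.
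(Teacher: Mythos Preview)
Your proof is correct, and the overall strategy (translating both conditions into the existence of a congruence diagram between $(1_A,f)$ and the zero partial morphism) matches the paper's. The differences are in the shortcuts taken. For $(1)\Rightarrow(2)$ the paper simply invokes the preceding lemma: $f$ factors through a trivial object $D$, $\Sigma(D)\cong 0$, hence $\Sigma(f)$ is zero by functoriality; you instead build the congruence diagram by hand, which works but duplicates effort. For $(2)\Rightarrow(1)$ the paper exploits strictness of $0$ more aggressively: since $A_0$ must embed as a clopen into $A_2=0$, one has $A_0=0$ outright, so ${A_0^1}^c=A$ and the last bullet of the congruence definition already says $f$ is trivial on all of $A$ --- no appeal to Lemma~\ref{lemma2_clopen} or the gluing argument from Proposition~\ref{trivial objects} is needed. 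Your route via the decomposition $\rho\cong\rho_{A_0}\coprod\rho_{A_0^c}$ is valid but unnecessarily general here; noticing $A_0=0$ collapses it immediately.
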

 \begin{proof}
 If $f \colon A \rightarrow B$ is a trivial morphism in $\mathsf{PreOrd}(\mathbb C)$, it factors through a trivial object $D$. Since $\Sigma (D) \cong 0$ by the previous lemma, $\Sigma (f) \colon A \rightarrow B$ is a zero morphism.
 
 Conversely, if $\Sigma (f) \colon A \rightarrow B$ is a zero morphism in $\mathsf{Stab}(\mathbb C)$ we have a congruence diagram
$$
\xymatrix@=25pt{
{A_1 \,\, } \ar@{>->}[rrr]& & & {\, \,0}  \ar@{=}[ld]^{} \ar@{=}[dr]^{}  & & & \\
&&  {\,\, 0} \ar@{>->}[dl]  \ar@{=}[dr]^{} &  & 0 \ar@{=}[dl] \ar[dr] & & \\
 {A_0 \,\, } \ar@{>->}[drr]_{} \ar@{>->}@/^/@<0.5ex>[rruur]^{} \ar@{>->}[r]_{} & A   \ar@{.>}[rr]^{\omega_A} & & 0 \ar@{.>}[rr]^{\alpha_B}  && B\\
 { A_2 \,\, } \ar@{>->}[rr]_{}  & & A \ar@{=}[ul]_{}  \ar@/_/@<0.3ex>[urrr]_{f} &  & & & 
 } $$
 expressing the fact that $\Sigma (f)$ factors through $0$. We then have that $A_0=0=A_1$, and $A_2 = A$, hence $f$ is trivial on $A$, as desired.
 \end{proof}
 
 \begin{Prop}\label{zero-morphisms}
 For a morphism $\xymatrix@=30pt{A  \ar[r]^{< \alpha, f>} &  B }$ in $\mathsf{Stab}(\mathbb C)$ the following conditions are equivalent:
 \begin{enumerate}
 \item $< \alpha, f>= 0$
 \item $f$ is a trivial morphism in $\mathsf{PreOrd}(\mathbb C)$.
 \end{enumerate}
 \end{Prop}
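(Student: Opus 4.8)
The plan is to reduce the statement to Proposition~\ref{trivial-versus-zero}, which already settles the case of a \emph{total} morphism ($\Sigma(f)=0 \iff f$ trivial), by factoring the partial morphism $< \alpha, f >$ through the functor $\Sigma$. Concretely, I would first record the factorization, in $\mathsf{PaPreOrd}(\mathbb C)$,
$$(\alpha, f) \;=\; (1_{A'}, f)\circ(\alpha, 1_{A'}) \;=\; I(f)\circ(\alpha, 1_{A'}),$$
which is immediate from the composition rule, since the pullback involved is the trivial pullback of $1_{A'}$ along itself. Applying the quotient functor $\pi$ gives $< \alpha, f > = \Sigma(f)\circ< \alpha, 1_{A'}>$ in $\mathsf{Stab}(\mathbb C)$, where now $f\colon A' \to B$ is a genuine morphism of preorders. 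Symmetrically, since $\alpha\colon A'\rightarrowtail A$ is in particular a monic morphism of preorders, $I(\alpha)\colon A'\to A$ lies in $\mathsf{PreOrd}(\mathbb C)$, and the composite $(\alpha, 1_{A'})\circ I(\alpha)$ in $\mathsf{PaPreOrd}(\mathbb C)$ again only uses the pullback of $\alpha$ along itself, which is $A'$; hence this composite equals $1_{A'}$, so that $< \alpha, 1_{A'}>\circ\,\Sigma(\alpha) = 1_{A'}$ in $\mathsf{Stab}(\mathbb C)$, i.e. $< \alpha, 1_{A'}>$ is a split epimorphism with section $\Sigma(\alpha)$.

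Granting these two identities, the equivalence drops out. For $(2)\Rightarrow(1)$: if $f$ is trivial then $\Sigma(f)=0$ by Proposition~\ref{trivial-versus-zero}, and since $\mathsf{Stab}(\mathbb C)$ is pointed (Lemma~\ref{zero}) we get $< \alpha, f > = \Sigma(f)\circ< \alpha, 1_{A'}> = 0$. For $(1)\Rightarrow(2)$: if $< \alpha, f > = 0$, precompose with the section $\Sigma(\alpha)$ to obtain $\Sigma(f) = < \alpha, f >\circ\Sigma(\alpha) = 0\circ\Sigma(\alpha) = 0$, whence $f$ is trivial, again by Proposition~\ref{trivial-versus-zero}.

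I do not anticipate a genuine obstacle: the only care needed is the bookkeeping of the composition law in $\mathsf{PaPreOrd}(\mathbb C)$ to verify the two factorizations, together with the routine remark that $(\alpha, 1_{A'})$ and $I(\alpha)$ are bona fide morphisms precisely because $\alpha$ is simultaneously a clopen inclusion and a morphism of preorders. As an alternative one can argue ``by hand'': unravelling $< \alpha, f > = 0$ amounts to exhibiting a congruence diagram~\eqref{C-diagram} between $(\alpha, f)$ and the representative $(0\rightarrowtail A,\ 0\to B)$ of the zero morphism; strictness of the initial object forces the object $A_0$ of that diagram to be $0$, so its complement in $A_1 = A'$ is all of $A'$ and the condition ``$f_1$ is trivial on ${A_0^1}^c$'' reads exactly ``$f$ is trivial'', with the converse equally direct. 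I would nonetheless prefer the reduction to Proposition~\ref{trivial-versus-zero}, since it is shorter and makes the role of that proposition transparent.
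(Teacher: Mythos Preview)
Your proof is correct. The paper, however, takes exactly the ``by hand'' route you sketch at the end: it writes down the congruence diagram between $(\alpha,f)$ and the zero representative $(0\rightarrowtail A,\ 0\to B)$, uses strictness of $0$ to force $A_0=0=A_2$, hence $A_1=A'$, and reads off that $f$ is trivial on $A'$; the converse is the same diagram with $A_1=A'$ and $A_2=0$ chosen by hand.

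Your preferred argument is genuinely different: it reduces the partial case to the total one (Proposition~\ref{trivial-versus-zero}) via the factorization $\langle\alpha,f\rangle=\Sigma(f)\circ\langle\alpha,1_{A'}\rangle$ together with the observation that $\langle\alpha,1_{A'}\rangle$ is split epi with section $\Sigma(\alpha)$. This is clean and conceptual, and it makes explicit a useful fact (that a clopen inclusion becomes a split mono in $\mathsf{Stab}(\mathbb C)$ with retraction $\langle\alpha,1_{A'}\rangle$) that the paper does not isolate. The trade-off is that the paper's direct argument is self-contained and does not rely on Proposition~\ref{trivial-versus-zero}, whereas yours does; on the other hand, yours avoids re-running the congruence-diagram/strictness computation that already appears in the proofs of the two preceding lemmas.
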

 \begin{proof}
 If $< \alpha, f>= 0$ then there is a congruence diagram
\begin{equation}\label{diagram-ref}
\xymatrix@=35pt{{ {A_1}\,\, } \ar@{>->}[rr] & & {\, \,\, \,  A'}  \ar@{>->}[ld]^{\alpha} \ar[dr]^{f}  & & \\
 {A_0 \,\, } \ar@{>->}[drr] \ar@{>->}[rru] \ar@{>->}[r] & A   & &B \\
 { A_2\,\, } \ar@{>->}[rr]  & & {\, \, 0 } \ar@{>->}[ul] \ar[ru] & &
 }
 \end{equation}
 hence $A_0 = 0 = A_2$ and then $A_1= A'$. This means that $f \colon A' \rightarrow B$ is a trivial morphism on $A'$. 
 
 Conversely, to see that the triviality of $f$ implies that $< \alpha, f>= 0$ it suffices to choose $A_1 = A'$ and then $A_2=0$ in the diagram \ref{diagram-ref}, which then becomes the required congruence diagram.
 \end{proof}

\begin{Prop}\label{Coincidence}
There is a commutative diagram
$$
\xymatrix{
\PreOrd C^* \ar[r]^J \ar[d]_{\mathcal{S}^*=\Sigma'} & \PreOrd C \ar[d]^{\Sigma}\\
\Stab ^* \ar[r]^{J'} & \mathsf{Stab}(\mathbb C)
}
$$
where $J$ and $J'$ are inclusion functor, $\Sigma'$ is the restriction of $\Sigma$ to $\PreOrd C ^*$ and ${\mathcal S}^*$ is the quotient functor defined in (\ref{quotient-category}). That is:
\begin{enumerate}
\item
Two parallel morphisms $f \colon A \rightarrow B$ and $g \colon A \rightarrow B$ in $\mathsf{PreOrd}(\mathbb C)$ are identified by the functor $\Sigma$ if and only if they belong to the equivalence relation $R_{A,B}$ defined at the beginning of Section \ref{Section_stable_category}.
\item
If $B$ is an object with a global element in $\mathsf{PreOrd}(\mathbb C)$ and $\langle \alpha,f\rangle\colon A\to B$ is a morphism in $\mathsf{Stab}(\mathbb C)$, then it is represented by a morphism in $\mathsf{PreOrd}(\mathbb C)$.
\end{enumerate}
 \end{Prop}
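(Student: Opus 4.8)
The plan is to prove the two itemized statements directly by unwinding the two definitions of the stable category, and then to note that (1) and (2) together exhibit $J'$ as a full embedding making the square commute.

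\emph{Proof of (1).} By definition $\Sigma(f)=\pi(I(f))=\langle 1_A,f\rangle$ and $\Sigma(g)=\langle 1_A,g\rangle$, so $\Sigma(f)=\Sigma(g)$ if and only if $(1_A,f)\sim(1_A,g)$ in $\mathsf{PaPreOrd}(\mathbb C)$, i.e. there is a congruence diagram \eqref{C-diagram} with $A_1=A_2=A$, $\alpha_1=\alpha_2=1_A$, $f_1=f$, $f_2=g$. In such a diagram the identities $\alpha_i\alpha_0^i=\alpha_0$ force $\alpha_0^1=\alpha_0^2=\alpha_0$, so the whole diagram reduces to the datum of a single clopen subobject $\alpha_0\colon A_0\to A$ of $(A,\rho)$, with complement $A_0^c$ in $A$, satisfying $f|_{A_0}=g|_{A_0}$ and $f|_{A_0^c}$, $g|_{A_0^c}$ trivial — which is exactly the assertion $f R_{A,B} g$ with witnessing clopen $A_0$. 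Conversely, a clopen subobject $C$ witnessing $f R_{A,B} g$ produces such a congruence diagram by taking $A_0=C$, $A_1=A_2=A$, $\alpha_1=\alpha_2=1_A$. Hence $\Sigma(f)=\Sigma(g)\iff f R_{A,B} g$.

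\emph{Proof of (2).} Represent the given arrow by a partial morphism $(\alpha,f)$ with $\alpha\colon A'\to A$ clopen and $f\colon A'\to B$ a morphism of preorders, and fix a global element $b\colon(1,\Delta_1)\to B$. Let $A'^c$ be the complement of $A'$ in $A$ and set $t\colon A'^c\to(1,\Delta_1)\xrightarrow{\ b\ }B$; the first arrow is a morphism of $\mathsf{PreOrd}(\mathbb C)$ since $(1,\Delta_1)$ is terminal there, and $b$ is one by reflexivity of $\sigma$, so $t$ is a trivial morphism. By Lemma~\ref{lemma2_clopen} and the clopen-decomposition bookkeeping of Section~\ref{Section_clopen}, $(A,\rho)$ is the coproduct $(A',\rho_{A'})\coprod(A'^c,\rho_{A'^c})$ in $\mathsf{PreOrd}(\mathbb C)$, so $f$ and $t$ glue to a morphism $h\colon(A,\rho)\to B$ of preorders with $h|_{A'}=f$ and $h|_{A'^c}=t$. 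Finally, the data $A_0=A'$, the total morphism $(1_A,h)$ on one side and $(\alpha,f)$ on the other, with $\alpha_0^1=\alpha$ (complement $A'^c$ in $A$) and $\alpha_0^2=1_{A'}$ (complement $0$ in $A'$), form a congruence diagram: $h\circ\alpha=h|_{A'}=f$, while $h|_{A'^c}=t$ and $f|_0\colon 0\to B$ are trivial (the latter since $0$ is strict). Therefore $\langle\alpha,f\rangle=\langle 1_A,h\rangle=\Sigma(h)$, as claimed.

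\emph{Conclusion and main obstacle.} Statement (1) says that $\Sigma$ coequalizes $R$ and that the induced map $\Hom_{\Stab^*}(A,B)\to\Hom_{\mathsf{Stab}(\mathbb C)}(A,B)$ is injective for all $A,B\in\PreOrd C^*$, so there is a unique functor $J'\colon\Stab^*\to\mathsf{Stab}(\mathbb C)$, the identity on objects, with $J'\circ\mathcal S^*=\Sigma\circ J=\Sigma'$, and $J'$ is faithful; statement (2), applied to objects of $\Stab^*$ (which carry a global element), shows this induced map is also surjective, so $J'$ is full. As $J'$ is clearly injective on objects it is a full embedding, $\Sigma'=\mathcal S^*$ up to this embedding, and the square commutes by construction. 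The only genuinely delicate point is the step in (2) asserting that $(A,\rho)$ is the coproduct of $(A',\rho_{A'})$ and $(A'^c,\rho_{A'^c})$ \emph{in} $\mathsf{PreOrd}(\mathbb C)$ — not merely that $\rho_A\cong\rho_{A'}\coprod\rho_{A'^c}$ in $\mathbb C$ — together with the verification that the constant map $t$ is internal; both reduce to Lemma~\ref{lemma2_clopen}, strictness of $0$, and reflexivity of $\sigma$, so the difficulty is one of care rather than of substance.
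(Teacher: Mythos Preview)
Your proof is correct and follows essentially the same approach as the paper: both parts are handled by unwinding a congruence diagram with $A_1=A_2=A$ for (1), and by gluing $f$ with the constant map through the global element $b$ along the clopen decomposition $A=A'\amalg A'^c$ for (2). Your discussion is in fact slightly more careful than the paper's, which simply invokes ``the universal property of the coproduct'' without explicitly noting that Lemma~\ref{lemma2_clopen} (together with the coherent-category fact that disjoint complemented subobjects give a coproduct) is what guarantees this decomposition already at the level of $\mathsf{PreOrd}(\mathbb C)$, not merely in $\mathbb C$.
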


 \begin{proof}
   (1) The condition that $\Sigma (f) = \Sigma(g)$ means that
 there is a congruence diagram 
 $$
\xymatrix@=35pt{{ A_1 \,\, } \ar@{>->}[rr]^{} & & {\, \, A}  \ar@{=}[ld]^{} \ar[dr]^{f}  & & \\
 {A_0 \,\, } \ar@{>->}[drr]_{} \ar@{>->}[rru]^{ } \ar@{>->}[r]_{\alpha_0} & A   & &B \\
 { A_1\,\, } \ar@{>->}[rr]_{}  & & A \ar@{=}[ul]_{} \ar[ru]_{g} & &
 } 
$$
Accordingly, the parallel morphisms $f$ and $g$ are equal on a clopen subobject $A_0$ of $A$ and trivial on its complement $A_1$ in $A$, that is $(f,g) \in R_{A,B}$. 

 (2) Let us then assume that ${B \in \mathsf{PreOrd}(\mathbb C)^*}$, i.e. there is a morphism $b \colon 1 \rightarrow B$.
We still have to prove that any morphism $\langle \alpha,f\rangle \colon A\to B$ in $\mathsf{Stab}(\mathbb C)$ is the image of an arrow $g \colon A \rightarrow B$ in $\mathsf{PreOrd}(\mathbb C)$, i.e. that $\langle \alpha,f\rangle = \Sigma (g)$. Let us consider a morphism $(\alpha , f )$ in $\mathsf{PaPreOrd}(\mathbb C)$:
$$\xymatrix{& {\, \, \, \, A'} \ar@{>->}[dl]_{\alpha} \ar[dr]^{f} & \\ A \ar@{.>}[rr]. & & B.}
 $$
The clopen subobject $A'$ of $A$ induces a decomposition of $A$ into a coproduct $A = A' \amalg {A'}^c$, where ${A'}^c$ is the complement of $A'$ in $ A$, and then a commutative diagram
 $$
\xymatrix@=30pt{
  & & {\, \, \, A'}  \ar@{>->}[ld]_{\alpha} \ar[dr]^{f}  & & \\
  & A  = A' \amalg {A'}^c \ar@{.>}[rr]^{\exists ! g}&  &B \\
  & & {A'}^c \ar@{>->}[ul]_{\alpha'} \ar[ru]_{b \circ t_{{A'}^c}} & &
 } 
$$
 where $t_{A'^c}$ is the unique arrow $A'^c\to 1$ and $g$ is the unique morphism determined by the universal property of the coproduct making the triangles commute. To see that $(\alpha, f) \sim (1_A, g)$ it suffices to consider the congruence diagram
$$
\xymatrix@=30pt{{ 0 \,\, } \ar@{>->}[rr]^{} & & {\, \, \, \, A'}  \ar@{>->}[ld]^{\alpha } \ar[dr]^{f}  & & \\
 {A' \,\, } \ar@{>->}[drr]_{} \ar@{=}[rru]^{ } \ar@{>->}[r]_{\alpha} & A   & &B \\
 { {A'}^c\,\, } \ar@{>->}[rr]_{\alpha'}  & & A \ar@{=}[ul]_{} \ar[ru]_{g} & &
 } 
$$
where $g \circ \alpha' = b \circ t_{A'^c}$ is trivial since it factors through the trivial object $1$. 
 \end{proof}

\begin{remark} The approach we adopted in this section can be used to recover the construction of the stable category provided in \cite{FH} for the category of endomappings of a finite set, that is, the category $\Cal M$ whose objects are all pairs $(X,f)$, where $X=\{1,2,\dots,n\}$ for some $n\geq 0$ is a finite set and $f\colon X \to X$ is a mapping and a morphism $g\colon (X,f)\to (X',f')$ in $\Cal M$ is a mapping $g\colon X\to X'$ such that $f'g=gf$. The category $\mathcal{M}$ can be embedded in $\mathsf{PreOrd}(\mathsf{Set})$ via the assignment $(X,f)\mapsto (X,\rho_f)$, where $\rho_f$ is the preorder on $X$ defined by $x\rho_f y$ if and only if $x=f^t(y)$ for some integer $t\geq0$. In this way $\Cal M$ is identified with a (non-full) subcategory of $\mathsf{PreOrd}(\mathsf{Set})$, and clopen subobjects of a given object $(X,\rho_f)$ correspond to (unions of) the connected components of the graph associated with $f$.

A full comprehensive comparison between our approach and the one used in \cite{FH} would require to introduce some notions that we believe it is not convenient to include in this paper. This will be done in \cite{BCG2}, where the two constructions will be compared in detail.
\end{remark}

\section{The pretopos context}\label{Section_pretoposes}

Recall that a category $\Cal C$ with finite sums (=coproducts) is {\em extensive} if it has pullbacks along coprojections in a sum and the following condition holds: in the commutative diagram, where the bottom row is a sum
$$
\xymatrix{
X' \ar[r] \ar[d] & A \ar[d] & Y' \ar[l] \ar[d]\\
X \ar[r] & X \coprod Y & Y\ar[l]
}
$$
the top row is a sum if and only if the two squares are pullbacks \cite{CLW}.  The property saying that the upper row of the diagram is a sum whenever the two squares are pullbacks is usually referred to as the ``universality of sums". 
 Recall that a sum of two objects $A$ and $B$ is called \emph{disjoint} if the coprojections $\varepsilon_A\colon A\to A \coprod B$ and $\varepsilon_B\colon B\to A \coprod B$ are monomorphisms and the intersection $A\cap B$ is an initial object in $\Sub(A\coprod B)$.
If $\Cal C$ has finite sums and pullbacks along sum coprojections, extensivity is equivalent to the property of having disjoint and universal finite sums \cite[Proposition~2.14]{CLW}; if $\Cal C$ is also a coherent category, disjoint sums are universal.

A {\em pretopos} is an exact and extensive category or, equivalently, an exact coherent category with finite disjoint coproducts.
\vspace{3mm}

\noindent {\bf Convention.} From now on $\mathbb C$ will always denote a \emph{pretopos}.
\vspace{3mm}

Let $\langle r_1,r_2 \rangle \colon \rho \rightarrow A \times A$ be an internal preorder on $A \in \mathbb{C}$ and let $B$ be a complemented subobject of $A$. Let $(\varepsilon_B, \hat{\varepsilon}_B)\colon (B,\rho_B)\to (A,\rho)$ and $(\varepsilon_{B^c}, \hat{\varepsilon}_{B^c})\colon  (B^c,\rho_{B^c})\to (A,\rho)$ be the inclusions, and let $\langle t_1,t_2 \rangle \colon \rho_B\to B \times B$ and $\langle t'_1,t'_2 \rangle\colon \rho_{B^c}\to B^c \times B^c$ be the induced internal preorders on $B$ and $B^c$, respectively.

 If $B$ is a clopen subobject of $(A,\rho)$, then Corollary~\ref{remark_disjoint_clopen} and the extensivity of $\mathbb{C}$ imply that the four commutative squares
\begin{equation}\label{4squares}
\xymatrix@!=35pt{
\rho_B \ar[r]^{\hat{\varepsilon}_B}\ar[d]^{t_1}\ar@{}[rd]|{(I)}	& \rho \ar[d]^{r_1}\ar@{}[rd]|{(II)} & \rho_{B^c} \ar[l]_{\hat{\varepsilon}_{B^c}} \ar[d]^{t'_1} & \rho_B \ar[r]^{\hat{\varepsilon}_B}\ar[d]^{t_2}\ar@{}[rd]|{(III)}	& \rho \ar[d]^{r_2}\ar@{}[rd]|{(IV)} & \rho_{B^c} \ar[l]_{\hat{\varepsilon}_{B^c}} \ar[d]^{t'_2}\\
B \ar[r]^{\varepsilon_B} & A  	&	B^c  \ar[l]_{\varepsilon_{B^c}} &	B \ar[r]^{\varepsilon_B} & A  	&	B^c  \ar[l]_{\varepsilon_{B^c}} \\
}
\end{equation}
are all pullback diagrams.

Conversely, assume that both the squares $(I)$ and $(II)$ are pullbacks, and let us then show that $B$ is a clopen subobject of $(A,\rho)$.
Set $X:=(B\times B^c) \cap \rho$, where both $B\times B^c$ and $\rho$ are seen as subobjects of  $A\times A$, and consider the following commutative diagram
$$
\xymatrix{
\rho_B\ar@/_/[dddr]\ar@/^/[drr]\\
 & {\, X\, } \ar[r]\ar[d]\ar@{.>}[lu] & \rho\ar[d]\\
 & {B\times B^c\,\, } \ar[r]\ar[d] & A\times A\ar[d]\\
 & {B\,\,  }\ar[r]_{\varepsilon_B} & A,\\
}
$$
where the upper square is a pullback by definition and the vertical arrows of the bottom square are the projections of the first components. The dotted arrow exists because the outer quadrangle is a pullback by assumption, and it is clear that this arrow is then a monomorphism. Thus $X$ is a subobject of both $B \times B$ and $B\times B^c$ (all viewed as subobjects of $A \times A$), hence $X\cong 0$.
   Similarly, also $(B^c\times B)\cap \rho \cong 0$, and therefore $B$ is a clopen subobject of $(A,\rho)$. 

Moreover, the same conclusion also follows from the assumption that the squares $(I)$ and $(III)$ are pullbacks. To summarize, we have proved the following characterization of clopen subobjects in a pretopos.

\begin{Prop}\label{characterization_clopen}
Let $\langle r_1,r_2 \rangle \colon \rho \rightarrow A \times A$ be an internal preorder on an object $A$ in a pretopos $\mathbb{C}$, and let $B$ be a complemented subobject of $A$. Then the following conditions are equivalent:
\begin{enumerate}
\item
$B$ is a clopen subobject of $(A,\rho)$;
\item
the squares $(I)$ and $(II)$ in \ref{4squares} are both pullbacks;
\item
the squares $(I)$ and $(III)$ in \ref{4squares} are both pullbacks.
\end{enumerate}
\end{Prop}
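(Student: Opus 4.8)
The plan is to prove the chain of implications $(1)\Rightarrow(2)$ (and its obvious variant $(1)\Rightarrow(3)$), then $(2)\Rightarrow(1)$, and finally to note that the argument for $(3)\Rightarrow(1)$ is entirely symmetric. The implication $(1)\Rightarrow(2)$ is essentially already done in the discussion preceding the statement: if $B$ is clopen then $\rho_A\cong \rho_B\coprod\rho_{B^c}$ by Lemma~\ref{lemma2_clopen}, and by extensivity of $\mathbb{C}$ the coprojections of this coproduct decomposition, together with the restricted structure maps, fit into the four squares of \eqref{4squares} as pullbacks. Concretely, since $\rho = \varepsilon_B(\rho_B)\cup \varepsilon_{B^c}(\rho_{B^c})$ is a disjoint union inside $A\times A$ and $A = B\coprod B^c$, universality of sums (which holds in any coherent category with disjoint sums) forces squares $(I)$ and $(II)$ — and likewise $(III)$ and $(IV)$ — to be pullbacks. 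So the first implication is a short appeal to the lemmas and the convention block already in place.

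The substantive content is $(2)\Rightarrow(1)$, and here I would follow exactly the construction sketched just before the proposition. Set $X := (B\times B^c)\cap\rho$ as subobjects of $A\times A$. Using that square $(I)$ exhibits $\rho_B$ as the pullback of $r_1$ along $\varepsilon_B$, and that square $(II)$ does the same for $\rho_{B^c}$ along $\varepsilon_{B^c}$, I would show that $X$ admits a monomorphism into $\rho_B$ (via the outer pullback rectangle obtained by composing the defining pullback of $X$ over $B\times B^c\to A\times A$ with the first-projection square $B\times B^c\to B\xrightarrow{\varepsilon_B} A$, which by hypothesis $(2)$ is a pullback along which $\rho$ pulls back to $\rho_B$). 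But $X$ is by construction a subobject of $B\times B^c$ as well, and $B\times B^c$ meets $B\times B$ in $0$ inside $A\times A$ — because $B\cap B^c = 0$ and products distribute over the complemented decomposition $A\times A\cong (B\times B)\coprod(B\times B^c)\coprod(B^c\times B)\coprod(B^c\times B^c)$, as recalled in Section~1. Since $0$ is strict, $X\cong 0$, i.e.\ $(B\times B^c)\cap\rho = 0$. For the other half of the clopen condition I need $(B^c\times B)\cap\rho = 0$; here I would observe that square $(I)$ combined with square $(II)$ (the two pullbacks of hypothesis $(2)$) already pin down $\rho$ on the "$B$-rows'' $B\times A$, and run the symmetric argument — or, more cleanly, deduce it from the fact that $\rho$ restricted to $B^c\times B$ must be carried by $\rho_{B^c}$ via square $(II)$, and $\rho_{B^c}\hookrightarrow B^c\times B^c$ is disjoint from $B^c\times B$ in $A\times A$ by the same product-distributivity. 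Hence both off-diagonal blocks of $\rho$ vanish and $B$ is clopen in $(A,\rho)$.

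For $(3)\Rightarrow(1)$ I would simply remark that replacing $r_1$ by $r_2$ throughout interchanges the roles of the two factors of $A\times A$: square $(III)$ plays the role that $(I)$ played (now detecting $\rho$ on the "$B$-columns''), and the identical computation with $X := (B^c\times B)\cap\rho$ — a subobject of $\rho_B$ via square $(III)$, and disjoint from $B\times B$ inside $A\times A$ — gives $X\cong 0$, after which the complementary block vanishes by the same reasoning. So once $(2)\Rightarrow(1)$ is written out carefully, $(3)\Rightarrow(1)$ costs only a sentence, and $(1)\Rightarrow(3)$ is contained in the preamble discussion just as $(1)\Rightarrow(2)$ is.

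The main obstacle — really the only place requiring care — is the bookkeeping in $(2)\Rightarrow(1)$: keeping straight which of the four canonical blocks of $A\times A$ a given subobject lands in, and justifying that two of these blocks intersect in $0$. The key facts I will lean on are the strictness of the initial object $0$, the decomposition $A\times A\cong (B\times B)\coprod(B\times B^c)\coprod(B^c\times B)\coprod(B^c\times B^c)$ from Section~1, and the universal property of the pullback squares in hypothesis $(2)$ used to factor $X$ through $\rho_B$. Everything else is formal manipulation of pullbacks of monomorphisms, so I would present the $X\cong 0$ step via the explicit commutative diagram already displayed before the proposition and leave the verification of the "symmetric'' block to a remark.
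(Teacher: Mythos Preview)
Your proposal is correct and follows essentially the same approach as the paper's proof (which is the discussion immediately preceding the statement): the key step in $(2)\Rightarrow(1)$ of setting $X=(B\times B^c)\cap\rho$, factoring $X$ through $\rho_B$ via the pullback square~$(I)$, and concluding $X\cong 0$ from disjointness of the product blocks is exactly what the paper does, and your treatment of the remaining block via square~$(II)$ and of $(3)\Rightarrow(1)$ via symmetry matches the paper's ``Similarly'' and its closing sentence. One minor wording point: in $(3)\Rightarrow(1)$ you do not literally ``replace $r_1$ by $r_2$ throughout'', since square~$(I)$ is still used for the block $(B\times B^c)\cap\rho$; rather, square~$(III)$ replaces the role of square~$(II)$ in handling $(B^c\times B)\cap\rho$---but your actual computation is right.
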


\begin{remark}\label{remark_discrete_fibrations}
Asking that the square $(III)$ is a pullback can also be expressed as follows: when one looks at the morphism $(\varepsilon_B,\hat{\varepsilon}_B) \colon (B, \rho_B) \rightarrow (A, \rho)$ as an internal functor of internal categories, it is a \emph{discrete fibration}. 
Similarly, the requirement that the square $(I)$ is a pullback amounts to asking that this same morphism is a \emph{discrete opfibration}. When using this formulation of the definition of clopen subobject one sees a similarity with the notion of \emph{normal monomorphism} in the sense of Bourn \cite{Bourn}. The role of equivalence relations in the definition of normal monomorphisms is somehow similar to the one of preorders (in a pretopos) in the definition of clopens in this article.
\end{remark}

\begin{Prop}\label{lemma1_coproducts}
Let $(A,\rho)$ and $(B,\sigma)$ be two objects in $\PreOrd C$. Then the coproduct $(A,\rho)\coprod (B,\sigma)$ exists in $\PreOrd C$, and is defined ``componentwise'' by $(A\coprod B, \rho \coprod \sigma)$ as depicted in the following diagram
$$
\xymatrix@=40pt{
\rho \ar[r]^-{\hat{\varepsilon}_A} \ar@<.5ex>[d]^{r_2} \ar@<-.5ex>[d]_{r_1} & \rho \coprod \sigma \ar@<.5ex>[d]^{d_2} \ar@<-.5ex>[d]_{d_1} & \sigma \ar[l]_-{\hat{\varepsilon}_B} \ar@<.5ex>[d]^{s_2} \ar@<-.5ex>[d]_{s_1}\\
A \ar[r]^-{\varepsilon_A} & A\coprod B & B \ar[l]_-{\varepsilon_B} \\
}
$$
where $d_i: \rho\coprod \sigma \to A\coprod B$ is the unique arrow such that $d_i \hat{\varepsilon}_A=\varepsilon_A  r_i$ and $d_i \hat{\varepsilon}_B=\varepsilon_B s_i$ (for $i \in \{1,2\}$).\\
Moreover, $\rho = (\rho \coprod \sigma)_A$ and $\sigma = (\rho \coprod \sigma)_B$, that is, the orders $\rho$ on $A$ and $\sigma$ on $B$ coincide with those induced by $\rho \coprod \sigma$.
\end{Prop}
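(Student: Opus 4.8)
The plan is to proceed in three steps: first show that $(A\coprod B,\rho\coprod\sigma)$ together with $\langle d_1,d_2\rangle$ is a genuine object of $\PreOrd C$; then verify its universal property as the coproduct of $(A,\rho)$ and $(B,\sigma)$; and finally deduce the statement about induced orders.

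For the first step I would begin by checking that $\langle d_1,d_2\rangle\colon\rho\coprod\sigma\to (A\coprod B)\times(A\coprod B)$ is a monomorphism. From the defining equations of the $d_i$ one computes $\langle d_1,d_2\rangle\hat\varepsilon_A=(\varepsilon_A\times\varepsilon_A)\langle r_1,r_2\rangle$ and $\langle d_1,d_2\rangle\hat\varepsilon_B=(\varepsilon_B\times\varepsilon_B)\langle s_1,s_2\rangle$, so $\langle d_1,d_2\rangle$ is the arrow induced out of the coproduct $\rho\coprod\sigma$ by two monomorphisms landing in the \emph{disjoint} summands $A\times A$ and $B\times B$ of $(A\coprod B)\times(A\coprod B)$ (using the decomposition of this product recalled in the first section and the extensivity of $\mathbb C$); hence it is a monomorphism. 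Reflexivity follows since $\Delta_{A\coprod B}\cong\Delta_A\coprod\Delta_B$ by extensivity, while $\Delta_A\le\rho$ and $\Delta_B\le\sigma$. For transitivity, the key point is that the iterated pullback $(\rho\coprod\sigma)\times_{A\coprod B}(\rho\coprod\sigma)$, taken along $d_2$ and $d_1$, is isomorphic to $(\rho\times_A\rho)\coprod(\sigma\times_B\sigma)$: the ``mixed'' summands vanish because $d_2$ maps the $\rho$-component into $A$ while $d_1$ maps the $\sigma$-component into $B$, and $A\cap B\cong 0$ in $A\coprod B$. Granting this, one takes the transitivity arrow to be $\tau_\rho\coprod\tau_\sigma$ and verifies the two required equalities with $d_1,d_2$ summand by summand.

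For the universal property, given morphisms $(f,\hat f)\colon(A,\rho)\to(C,\theta)$ and $(g,\hat g)\colon(B,\sigma)\to(C,\theta)$ in $\PreOrd C$, the universal property of $A\coprod B$ in $\mathbb C$ produces a unique $h\colon A\coprod B\to C$ with $h\varepsilon_A=f$ and $h\varepsilon_B=g$, and that of the coproduct $\rho\coprod\sigma$ produces a unique $\hat h\colon\rho\coprod\sigma\to\theta$ with $\hat h\hat\varepsilon_A=\hat f$ and $\hat h\hat\varepsilon_B=\hat g$. The compatibility with the structure maps of $\theta$ and the $d_i$ then follows by precomposing with the jointly epimorphic coprojections $\hat\varepsilon_A,\hat\varepsilon_B$ and using that $(f,\hat f)$ and $(g,\hat g)$ are morphisms of preorders; uniqueness of the induced $(h,\hat h)$ is immediate from the two uniqueness clauses.

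Finally, for the last assertion, $(\rho\coprod\sigma)_A$ is by definition the pullback of $\langle d_1,d_2\rangle$ along $\varepsilon_A\times\varepsilon_A\colon A\times A\to(A\coprod B)\times(A\coprod B)$; since $A\times A$ is a disjoint summand of the target and, under the decomposition used above, $\rho$ is precisely the part of $\rho\coprod\sigma$ lying over it, the universality of sums gives $(\rho\coprod\sigma)_A\cong\rho$, and symmetrically $(\rho\coprod\sigma)_B\cong\sigma$. The main obstacle I anticipate is making the decomposition of the iterated pullback $(\rho\coprod\sigma)\times_{A\coprod B}(\rho\coprod\sigma)$ fully precise and compatible with the transitivity morphisms; everything else reduces directly to the extensivity properties of $\mathbb C$ recalled at the start of Section~\ref{Section_pretoposes} together with the universal properties of the coproducts $A\coprod B$ and $\rho\coprod\sigma$.
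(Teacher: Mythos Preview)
Your proposal is correct and follows essentially the same route as the paper: monicity of $\langle d_1,d_2\rangle$ via disjointness of the images in $(A\coprod B)\times(A\coprod B)$, transitivity via the extensivity-based decomposition $(\rho\coprod\sigma)\times_{A\coprod B}(\rho\coprod\sigma)\cong(\rho\times_A\rho)\coprod(\sigma\times_B\sigma)$, the universal property checked componentwise, and the last assertion by pulling back along $\varepsilon_A\times\varepsilon_A$. The only cosmetic difference is that the paper argues the final isomorphism $(\rho\coprod\sigma)_A\cong\rho$ by exhibiting both inclusions separately, whereas you invoke universality of sums directly; both are fine.
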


\begin{proof}
We first need to prove that $d:=\langle d_1,d_2\rangle \colon \rho \coprod \sigma\to (A\coprod B)\times (A\coprod B)$ is a monomorphism (hence a relation on $A\coprod B$). First notice that $\rho$ and $\sigma$ are disjoint subobjects of $(A\coprod B)\times (A\coprod B)$ (via the monomorphisms $(\varepsilon_A\times \varepsilon_A) \langle r_1,r_2 \rangle$ and $(\varepsilon_B\times \varepsilon_B) \langle s_1,s_2 \rangle$, respectively). Since $\rho \cup \sigma=\rho \coprod \sigma$, there is a monomorphism $d'\colon \rho \coprod \sigma \to (A\coprod B)\times (A\coprod B)$. Looking at the following diagram (where the vertical arrows of the bottom squares are the suitable projections) it is easy to see that $d=d'$.
$$
\xymatrix{
{\rho \, \, } \ar@{>->}[r] \ar[d] & \rho \coprod \sigma \ar[d]^{d'} &{\, \,  \sigma} \ar@{>->}[l]\ar[d]\\
{A\times A\, \, } \ar@{>->}[r] \ar[d] & (A\coprod B)\times (A\coprod B)\ar[d] & {\, \, B\times B} \ar@{>->}[l]\ar[d]\\
{A \, \, }\ar@{>->}[r]^-{\varepsilon_A} & A\coprod B & {\,\,  B}  \ar@{>->}[l]_-{\varepsilon_B}\\
}
$$

The reflexivity of $\rho$ and $\sigma$ implies that the relation $$d =\langle d_1,d_2\rangle \colon \rho \coprod \sigma\to (A\coprod B)\times (A\coprod B)$$ is reflexive. To check that this relation is also transitive, consider the pullback
$$
\xymatrix{
P\ar[r]^{p_2} \ar[d]_{p_1} & \rho \coprod \sigma \ar[d]^{d_1} \\
\rho\coprod \sigma \ar[r]_{d_2} & A\coprod B, \\
}
$$
and the following commutative diagram
$$
\xymatrix@=15pt{
\rho \times_ A \rho \ar[rd] \ar[dd] \ar@{.>}[rr]& & P \ar[rd] \ar[dd] & & \sigma \times_B \sigma \ar[rd] \ar[dd] \ar@{.>}[ll]& \\
 & \rho \ar[dd]_(0.4){r_1} \ar[rr] & & \rho \coprod \sigma \ar[dd]_(0.4){d_1} & & \sigma \ar[dd]_(0.4){s_1}\ar[ll]\\
\rho \ar[rd]^{r_2} \ar[rr] & & \rho \coprod \sigma \ar[rd]^{d_2} & & \sigma \ar[rd]^{s_2} \ar[ll]&\\
& A \ar[rr] &  & A\coprod B & & B, \ar[ll]\\ 
}
$$
where all the squares are pullbacks by the extensivity of $\mathbb{C}$. Again by extensivity, $P$ is the coproduct of $\rho\times_A \rho$ and $\sigma \times_B \sigma$. By using the transitivity of $\rho$ and $\sigma$, it is straightforward to show that there exists a morphism $\gamma: P \to \rho \coprod \sigma$ such that $d_1 \gamma=d_1 p_1$ and $d_2 \gamma=d_2 p_2$, yielding the desired transitivity. 

Finally, given an object $(X,{\tau})$ and morphisms $(A,\rho)\rightarrow ({X,\tau})\leftarrow (B,\sigma)$ in $\PreOrd C$ we have the following diagram
$$
\xymatrix@=15pt{
\rho \ar[rr] \paral{dd} \ar[rrrd]  & & \rho \coprod \sigma \paral{dd}^{d_2}_{d_1} \ar@{.>}[dr]^{\hat{\varphi}} & & \sigma \ar[ll] \paral{dd} \ar[dl]\\
 & & & \tau \paral{ddd}^{t_2}_{t_1}& \\
A \ar[rr]  \ar[rrrdd] & & A\coprod B \ar@{.>}[ddr]^{\varphi} & & B \ar[ll] \ar[ddl]\\
& & & & \\
 & & & X & \\
}
$$
where the (unique) dotted arrows induced by the coproducts at the two levels are such that $\varphi  d_i=t_i  \hat{\varphi}$, and also satisfy the required universal property.

For the last assertion, consider the diagram 
$$
\xymatrix{
\rho \ar@/_/[dddr]_{r_1}\ar@/^/[drr]^{\varepsilon_{\rho}}\\
 & {\, (\rho \coprod \sigma)_A \, } \ar[r]\ar[d]\ar@{.>}[lu] & \rho \coprod \sigma \ar[d]\\
 & {A\times A\,\, } \ar[r]_{\varepsilon_A \times \varepsilon_A} \ar[d] & B \times B \ar[d]\\
 & {A\,\,  }\ar[r]_{\varepsilon_A} & B,\\
}
$$
where the upper square is a pullback and the vertical arrows of the lower square are the first product projections.  The fact that the external diagram is a pullback (as observed above) implies that there is the induced dotted arrow showing that $(\rho \coprod \sigma)_A \le \rho$ as relations on $A$. Since one always has that $\rho \le (\rho \coprod \sigma)_A $, we conclude that $\rho = (\rho \coprod \sigma)_A$.
\end{proof}

\begin{corollary}\label{complemented-sub}
For a subobject $\xymatrix{(A,\rho)\, \, \,  \ar@{>->}[r]^j &\, (B, \sigma)}$ in $\mathsf{PreOrd}(\mathbb C)$ the following conditions are equivalent:
\begin{enumerate}
\item $\xymatrix{(A,\rho)\, \, \,  \ar@{>->}[r]^j &\, (B, \sigma)}$ is a clopen subobject;
\item $\xymatrix{(A,\rho)\, \, \,  \ar@{>->}[r]^j &\, (B, \sigma)}$ is a complemented subobject in $\mathsf{PreOrd}(\mathbb C)$. 
\end{enumerate}
\end{corollary}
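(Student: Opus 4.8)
The plan is to derive both implications from Proposition~\ref{lemma1_coproducts} (coproducts in $\PreOrd C$ are computed componentwise, and each summand relation is recovered by restricting the coproduct relation) together with Lemma~\ref{lemma2_clopen}. I shall use that pullbacks in $\PreOrd C$ are computed componentwise --- so that the underlying object of an intersection of two subobjects of $(B,\sigma)$ in $\PreOrd C$ is their intersection in $\mathbb C$ --- and that a morphism in $\PreOrd C$ whose underlying $\mathbb C$-arrow is a monomorphism is itself a monomorphism in $\PreOrd C$. Recall that a clopen subobject of $(B,\sigma)$ is, by the definitions of Section~\ref{Section_clopen}, a complemented subobject $A$ of $B$ in $\mathbb C$ such that $(A,\sigma_A)$ and $(A^c,\sigma_{A^c})$ are open, so that statement (1) implicitly carries the information $\rho=\sigma_A$.

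For $(1)\Rightarrow(2)$: if $j$ exhibits $(A,\rho)$ as a clopen subobject of $(B,\sigma)$, then $A$ is complemented in $B$ with complement $A^c$, one has $\rho=\sigma_A$, and Lemma~\ref{lemma2_clopen} gives $\sigma\cong\sigma_A\coprod\sigma_{A^c}$. By Proposition~\ref{lemma1_coproducts} this means exactly that $(B,\sigma)$ is the coproduct $(A,\sigma_A)\coprod(A^c,\sigma_{A^c})$ in $\PreOrd C$, with coprojections the two (necessarily monomorphic) inclusions into $(B,\sigma)$. Their intersection has underlying object $A\cap A^c\cong 0$, hence is the initial object of $\PreOrd C$; and any subobject of $(B,\sigma)$ containing both coprojections receives a morphism from the coproduct $(B,\sigma)$ and therefore equals $(B,\sigma)$. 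Thus $(A^c,\sigma_{A^c})$ is a complement of $(A,\rho)=(A,\sigma_A)$ in $\Sub_{\PreOrd C}((B,\sigma))$.

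For $(2)\Rightarrow(1)$: let $(A',\rho')\hookrightarrow(B,\sigma)$ be a complement of $(A,\rho)$, so that $(A,\rho)\cap(A',\rho')$ is the initial object and $(A,\rho)\cup(A',\rho')=(B,\sigma)$. The first condition gives $A\cap A'\cong 0$ in $\mathbb C$, so $A$ and $A'$ are disjoint subobjects of $B$ and $A\cup A'\cong A\coprod A'$ in $\mathbb C$. Hence the canonical comparison morphism $(A,\rho)\coprod(A',\rho')=(A\coprod A',\rho\coprod\rho')\to(B,\sigma)$ of Proposition~\ref{lemma1_coproducts} has monomorphic underlying map, so it is itself a monomorphism in $\PreOrd C$, exhibiting the coproduct as a subobject of $(B,\sigma)$ containing both $(A,\rho)$ and $(A',\rho')$; since $(A,\rho)\cup(A',\rho')=(B,\sigma)$, this comparison is an isomorphism. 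Consequently $B\cong A\coprod A'$, so $A$ is complemented in $B$ with $A^c=A'$, and $\sigma\cong\rho\coprod\rho'$, whence $\rho=\sigma_A$ and $\rho'=\sigma_{A^c}$ by the final assertion of Proposition~\ref{lemma1_coproducts}. It remains to check the clopen condition: viewing $\sigma\cong\sigma_A\coprod\sigma_{A^c}$ as a relation on $B=A\coprod A^c$, the first summand factors through $A\times A$ and the second through $A^c\times A^c$, so $\sigma\le(A\times A)\cup(A^c\times A^c)$ inside $B\times B\cong(A\times A)\coprod(A\times A^c)\coprod(A^c\times A)\coprod(A^c\times A^c)$; by distributivity of $\Sub(B\times B)$ this forces $\sigma\cap(A^c\times A)\cong 0$ and $\sigma\cap(A\times A^c)\cong 0$, which is precisely the statement that $A$ is a clopen subobject of $(B,\sigma)$.

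The only genuinely delicate point is pinning down what ``complemented subobject in $\PreOrd C$'' should mean, since $\PreOrd C$ is in general not coherent: I read it as the existence of a subobject $(A',\rho')$ with $(A,\rho)\cap(A',\rho')$ the initial object and with $(A,\rho)\cup(A',\rho')$ existing and equal to $(B,\sigma)$, and the crux of the argument is then the observation that this union must coincide with the (always existing) coproduct $(A,\rho)\coprod(A',\rho')$, which reduces everything to Proposition~\ref{lemma1_coproducts}. The remaining steps --- the componentwise computation of intersections and the bookkeeping with the fourfold disjoint decomposition of $B\times B$ --- are routine in a pretopos.
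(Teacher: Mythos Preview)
Your proof is correct and follows essentially the same strategy as the paper: both directions hinge on Proposition~\ref{lemma1_coproducts} to identify $(B,\sigma)$ with the componentwise coproduct $(A\coprod A^c,\rho\coprod\rho^c)$. The only minor divergence is in the final step of $(2)\Rightarrow(1)$: the paper invokes the pullback characterization of clopens (Proposition~\ref{characterization_clopen}) together with extensivity, whereas you verify the defining conditions $\sigma\cap(A^c\times A)\cong 0$ and $\sigma\cap(A\times A^c)\cong 0$ directly via the fourfold decomposition of $B\times B$; and you are more explicit than the paper about why a complement in $\PreOrd C$ descends to a complement in $\mathbb C$ at the object level (the paper simply writes ``where $B=A\coprod A^c$'' without justification).
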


\begin{proof}
$(1) \Rightarrow (2)$ By definition a clopen subobject $\xymatrix{(A,\rho)\, \, \,  \ar@{>->}[r]^j &\, (B, \sigma)}$ is such that $A$ is complemented in its codomain $B$, hence $ B \cong A \coprod A^c$. The assumption and Proposition \ref{lemma1_coproducts} imply that $\sigma = \rho \coprod \rho^c$.

$(2) \Rightarrow (1)$ Conversely, observe that in a pretopos $\mathbb C$, any two morphisms $f \colon A' \rightarrow A$ and $g \colon B' \rightarrow B$ induce a diagram
$$
\xymatrix@=35pt{A' \ar[r]^-{\varepsilon_{A'}} \ar[d]^f & A' \coprod B' \ar[d]^{f\coprod g} & B' \ar[d]_g  \ar[l]_-{\varepsilon_{B'}} \\
A \ar[r]_-{\varepsilon_A} & A \coprod B & \ar[l]^-{\varepsilon_B} B 
}
$$
where the two squares are pullbacks. Again by Proposition \ref{lemma1_coproducts} we know that this is also true in $\mathsf{PreOrd}(\mathbb C)$. Let us then consider the coproduct $$(A, \rho) \coprod (A^c,  \rho^c) = (A \coprod A^c, \rho \coprod \rho^c)$$ in $\mathsf{PreOrd}(\mathbb C)$, where $B = A \coprod A^c$, and the first projections 
yielding the diagram 
$$
\xymatrix@=35pt{\rho \ar[r]^-{\varepsilon_{\rho}} \ar[d]^{r_1} & \rho \coprod \rho^c \ar[d]^{r_1 \coprod s_1} & \rho^c \ar[d]_{s_1}  \ar[l]_-{\varepsilon_{\rho^c}} \\
A \ar[r]_-{\varepsilon_A} & A \coprod A^c & \ar[l]^-{\varepsilon_{A^c}} A^c. 
}
$$
By extensivity it follows that the two squares are pullbacks. Thanks to the characterization of clopen subobjects in Proposition \ref{characterization_clopen} and the last part of Proposition~\ref{lemma1_coproducts} we conclude that
$\xymatrix{(A,\rho)\, \, \,  \ar@{>->}[r]^j &\, (B, \sigma)}$ is a clopen subobject, as desired.
 \end{proof}

\begin{corollary}\label{pullbacks-sum}
Consider preorders $(A,\rho)$, $(B, \sigma)$, $(C, \tau)$ and $(D, \upsilon)$ in $\mathsf{PreOrd}(\mathbb C)$. Given any diagram 
of the form
$$
\xymatrix@=30pt{(A ,\rho)  \ar[r]^-{{\varepsilon}_A} \ar[d]^f & (A \coprod B, \rho \coprod \sigma) \ar[d]^{f \coprod g} & (B, \sigma)  \ar[l]_-{\varepsilon_B} \ar[d]_{g} \\
(C, \tau) \ar[r]_-{{\varepsilon}_C} & {(C \coprod D, \tau \coprod \upsilon)} & {(D, \upsilon).} \ar[l]^-{{\varepsilon}_D}
}
$$
in $\mathsf{PreOrd}(\mathbb C)$, the two commutative squares are pullbacks in $\mathsf{PreOrd}(\mathbb C)$.
\end{corollary}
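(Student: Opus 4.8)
The plan is to compute the relevant pullback in $\PreOrd C$ ``componentwise'' and to read off the conclusion from the extensivity of $\mathbb C$ together with Proposition~\ref{lemma1_coproducts}. By the left-right symmetry of the diagram it suffices to show that the left-hand square is a pullback in $\PreOrd C$. First I would record the elementary fact that a commutative square in $\PreOrd C$ is a pullback as soon as its square of underlying objects and its square of underlying relations are both pullbacks in $\mathbb C$: given a test object $(T,\theta)$ equipped with a compatible pair of morphisms into $(C,\tau)$ and $(A\coprod B,\rho\coprod\sigma)$, the pullback of underlying objects produces a unique $t\colon T\to A$ in $\mathbb C$ and the pullback of underlying relations a unique $\hat t\colon\theta\to\rho$, and the uniqueness parts of these two universal properties force $(t,\hat t)$ to be a morphism of preorders, which is then the required factorization.

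Now for the left-hand square. On the level of objects it reads
$$
\xymatrix{
A \ar[r]^-{\varepsilon_A} \ar[d]_f & A\coprod B \ar[d]^{f\coprod g}\\
C \ar[r]_-{\varepsilon_C} & C\coprod D,
}
$$
and this is a pullback in the pretopos $\mathbb C$ by the universality of coproducts, exactly as recalled in the proof of Corollary~\ref{complemented-sub} (the pullback of $f\coprod g$ along the coprojection $\varepsilon_C$ is $\varepsilon_A\colon A\to A\coprod B$, equipped with the map $f$). On the level of relations, Proposition~\ref{lemma1_coproducts} identifies the relation on $A\coprod B$ with $\rho\coprod\sigma$, shows that the ``hat component'' of the coprojection $\varepsilon_A$ is the coprojection $\hat\varepsilon_A\colon\rho\to\rho\coprod\sigma$, and --- since the hat component of a morphism of preorders is uniquely determined by its object component --- identifies the hat component of $f\coprod g$ with $\hat f\coprod\hat g\colon\rho\coprod\sigma\to\tau\coprod\upsilon$. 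Hence the square of underlying relations is
$$
\xymatrix{
\rho \ar[r]^-{\hat\varepsilon_A} \ar[d]_{\hat f} & \rho\coprod\sigma \ar[d]^{\hat f\coprod\hat g}\\
\tau \ar[r]_-{\hat\varepsilon_C} & \tau\coprod\upsilon,
}
$$
which is again a pullback in $\mathbb C$, by applying extensivity this time to the two morphisms $\hat f\colon\rho\to\tau$ and $\hat g\colon\sigma\to\upsilon$.

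Putting the two together, the left-hand square is a pullback in $\PreOrd C$ by the first paragraph, and the right-hand square is handled in the same way, with $g$, $\upsilon$, $D$ in place of $f$, $\tau$, $C$. The argument involves no genuine difficulty; the only points requiring a moment of care are the two identifications just mentioned --- that $\widehat{f\coprod g}=\hat f\coprod\hat g$ and that the hat component of $\varepsilon_A$ is the coprojection into $\rho\coprod\sigma$ --- and both are immediate from the explicit componentwise description of coproducts in Proposition~\ref{lemma1_coproducts}. In essence the corollary simply says that the two pullbacks which extensivity of $\mathbb C$ supplies separately, on objects and on relations, glue to a pullback of internal preorders.
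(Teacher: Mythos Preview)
Your proof is correct and follows essentially the same idea as the paper's one-line argument: the coproduct in $\PreOrd C$ is computed componentwise (Proposition~\ref{lemma1_coproducts}), so extensivity of $\mathbb C$ applied separately on the object level and on the relation level gives the two pullbacks needed, and these assemble to a pullback of preorders. Your explicit general observation---that a commutative square in $\PreOrd C$ is a pullback once both its underlying-object square and its underlying-relation square are pullbacks in $\mathbb C$---is exactly the glue the paper leaves implicit; the paper instead gestures at Proposition~\ref{characterization_clopen} and Corollary~\ref{complemented-sub}, which encode the same ``both levels are coproduct diagrams, hence both levels are pullbacks by extensivity'' information in the language of clopen subobjects. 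So the route is the same, with your version spelling out the reduction-to-$\mathbb C$ step more directly rather than going through the clopen characterisation.
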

\begin{proof}
The result holds in any pretopos, and it also holds in $\mathsf{PreOrd}(\mathbb C)$ thanks to Proposition \ref{lemma1_coproducts}, Proposition \ref{characterization_clopen} and Corollary \ref{complemented-sub}.
\end{proof}

\section{Preservation properties of $\Sigma$}\label{Preservation properties} 
 
 \begin{Prop}\label{Pres-Mono}
 The functor $\Sigma \colon \mathsf{PreOrd}(\mathbb C) \rightarrow \mathsf{Stab}(\mathbb C)$ preserves monomorphisms.
 \end{Prop}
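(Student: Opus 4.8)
The plan is to show that if $f\colon (A,\rho)\to (B,\sigma)$ is a monomorphism in $\mathsf{PreOrd}(\mathbb C)$, then $\Sigma(f)=\langle 1_A,f\rangle$ is a monomorphism in $\mathsf{Stab}(\mathbb C)$. So suppose we are given two morphisms $\langle \alpha_1,g_1\rangle,\langle \alpha_2,g_2\rangle\colon X\to A$ in $\mathsf{Stab}(\mathbb C)$ with $\Sigma(f)\circ\langle \alpha_1,g_1\rangle = \Sigma(f)\circ\langle \alpha_2,g_2\rangle$; I must produce a congruence diagram witnessing $\langle \alpha_1,g_1\rangle = \langle \alpha_2,g_2\rangle$. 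Writing out the composites in $\mathsf{PaPreOrd}(\mathbb C)$, the hypothesis says that $(\alpha_1, f g_1)\sim(\alpha_2, f g_2)$, so there is a congruence diagram of the form \eqref{C-diagram} with top and bottom legs $(\alpha_1, fg_1)$ and $(\alpha_2, fg_2)$: a common clopen subobject $X_0$ of $X$ on which $fg_1$ and $fg_2$ agree (after restriction through the relevant clopens $X_0\rightarrowtail X_1$, $X_0\rightarrowtail X_2$), and on whose complements $fg_1$, $fg_2$ are trivial.

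The key point is that $f$ being a monomorphism lets me ``cancel'' it. First, on the clopen piece $X_0$ where $f g_1$ and $f g_2$ coincide: since $f$ is a monomorphism in $\mathsf{PreOrd}(\mathbb C)$, this forces $g_1$ and $g_2$ to coincide on $X_0$ as well. Second, on the complementary clopen pieces: I know $f g_i$ restricted to ${X_0^i}^c$ is trivial, and I want to conclude $g_i$ restricted to ${X_0^i}^c$ is trivial. Here I invoke Lemma~\ref{epi-mono-trivial}: the composite $(g_i)|_{{X_0^i}^c}$ followed by $f$ is trivial, and $f$ is a monomorphism in $\mathbb C$ (a monomorphism in $\mathsf{PreOrd}(\mathbb C)$ has underlying $f$ monic in $\mathbb C$, since $(1_A,\hat f)$ monic in $\mathsf{PreOrd}(\mathbb C)$ and the existence of $\mathbb C$-kernel pairs gives this — or one argues directly that $\mathsf{PreOrd}(\mathbb C)\to\mathbb C$ preserves and reflects monomorphisms on objects with the discrete relation), so the first bullet of Lemma~\ref{epi-mono-trivial} applies and yields triviality of $(g_i)|_{{X_0^i}^c}$. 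Reassembling the same clopen data $X_0\rightarrowtail X_1$, $X_0\rightarrowtail X_2$ with the morphisms $g_1,g_2$ in place of $fg_1,fg_2$ then produces precisely a congruence diagram for $(\alpha_1,g_1)\sim(\alpha_2,g_2)$, i.e. $\langle\alpha_1,g_1\rangle=\langle\alpha_2,g_2\rangle$ in $\mathsf{Stab}(\mathbb C)$.

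The step I expect to be the main obstacle — or at least the one requiring care — is the bookkeeping of the clopen subobjects when passing from the composite morphisms $(\alpha_i, fg_i)$ in $\mathsf{PaPreOrd}(\mathbb C)$ back to $(\alpha_i,g_i)$: one must check that the domain-side clopen data ($X_1$, $X_2$, $X_0$ and its complements inside each) used in the congruence diagram for the composites is literally the same data needed for a congruence diagram between $\langle\alpha_1,g_1\rangle$ and $\langle\alpha_2,g_2\rangle$, which is clear once one notes that composition with $(1_A,f)=I(f)$ on the left does not alter the pullback-defined domains (it is a ``total'' morphism, so $\tilde A_i = g_i^{-1}(A)=X_i$), so the congruence diagram transports verbatim. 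The verification that $f$ monic in $\mathsf{PreOrd}(\mathbb C)$ gives the cancellation property on restrictions is routine: restriction is given by precomposition with a clopen inclusion, and monomorphisms are stable under precomposition, so $f\circ (g_1)|_{X_0} = f\circ(g_2)|_{X_0}$ forces $(g_1)|_{X_0}=(g_2)|_{X_0}$ directly. Everything else is diagram-chasing with the pullback and coproduct properties of clopens already established in Sections~\ref{Section_clopen} and~\ref{Section_pretoposes}.
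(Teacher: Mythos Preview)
Your proof is correct and follows essentially the same route as the paper's: write out the congruence diagram witnessing $(\alpha_1,fg_1)\sim(\alpha_2,fg_2)$, cancel the monomorphism $f$ on the common clopen $X_0$, and invoke Lemma~\ref{epi-mono-trivial} on the complements to get triviality of the $g_i$ there, yielding a congruence diagram for $(\alpha_1,g_1)\sim(\alpha_2,g_2)$. Your added remarks---that post-composing with the total morphism $I(f)$ leaves the domain-side clopen data unchanged, and that a monomorphism in $\mathsf{PreOrd}(\mathbb C)$ has underlying arrow monic in $\mathbb C$ (needed for Lemma~\ref{epi-mono-trivial})---are points the paper leaves implicit, so your version is slightly more explicit but not substantively different.
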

 \begin{proof}
 Let $ m \colon A \rightarrow B$ be a monomorphism in $\mathsf{PreOrd}(\mathbb C)$, and consider two morphisms $< \alpha, f>$ and $< \beta, g>$ such that $\Sigma (m) \circ  < \alpha, f> = \Sigma (m) \circ  < \beta, g>$. We have a congruence diagram
$$
 \xymatrix@=25pt{{ A_1 \,\, } \ar@{>->}[rr]^{} & & {\, \, \, \, A'}  \ar@{>->}[ld]^{\alpha } \ar[dr]_{f}  \ar[rrd]^{m  f} & & & \\
 {A_0 \,\, } \ar@{>->}[drr]_{} \ar@{>->}[rru]^{ } \ar@{>->}[r] & A   & &B \ar[r]^m & C\\
 { A_2\,\, } \ar@{>->}[rr]_{\alpha'}  & & A \ar@{>->}[ul]_{\beta} \ar[ru]^{g} \ar[rru]_{m g}& & &
 } 
 $$
 showing that $m  f$ and $m g$ coincide on $A_0$, and are both trivial on $A_1$ and $A_2$, respectively. Since $m$ is a monomorphism, we conclude that $f$ and $g$ coincide on $A_0$, and are trivial on $A_1$ and $A_2$, respectively (by Lemma \ref{epi-mono-trivial}). This implies that $< \alpha, f> = < \beta, g>$.
 \end{proof}
 
 \begin{Prop}\label{coproducts-preserved}
 The functor $\Sigma \colon \mathsf{PreOrd}(\mathbb C) \rightarrow \mathsf{Stab}(\mathbb C)$ preserves finite coproducts.
 \end{Prop}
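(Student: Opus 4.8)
The plan is to check that $\Sigma$ preserves the empty coproduct and binary coproducts, from which preservation of all finite coproducts follows by induction. Preservation of the empty coproduct is immediate: the initial object of $\mathsf{PreOrd}(\mathbb C)$ is $0$, and by Lemma~\ref{zero} its image $\Sigma(0)$ is the zero object of $\mathsf{Stab}(\mathbb C)$, which in particular is initial. For a binary coproduct, write $A\coprod B=(A\coprod B,\rho\coprod\sigma)$ for the coproduct computed componentwise as in Proposition~\ref{lemma1_coproducts}, with coprojections $\varepsilon_A,\varepsilon_B$. I would then prove directly that $\Sigma(\varepsilon_A)$ and $\Sigma(\varepsilon_B)$ satisfy the universal property of the coproduct of $\Sigma(A)$ and $\Sigma(B)$ in the pointed category $\mathsf{Stab}(\mathbb C)$.

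For the existence of factorisations, let $\langle\gamma_A,u\rangle\colon A\to X$ and $\langle\gamma_B,v\rangle\colon B\to X$ be morphisms of $\mathsf{Stab}(\mathbb C)$, with $\gamma_A\colon A'\rightarrowtail A$ and $\gamma_B\colon B'\rightarrowtail B$ clopen subobjects and $u\colon A'\to X$, $v\colon B'\to X$ morphisms of $\mathsf{PreOrd}(\mathbb C)$. The subobject $A'\coprod B'$ of $A\coprod B$ is complemented (its complement being the coproduct of the complements of $A'$ in $A$ and of $B'$ in $B$), hence clopen by Corollary~\ref{complemented-sub}, and together with the copairing $[u,v]\colon A'\coprod B'\to X$ in $\mathsf{PreOrd}(\mathbb C)$ it defines a partial morphism $w=\langle A'\coprod B'\rightarrowtail A\coprod B,\,[u,v]\rangle\colon A\coprod B\to X$. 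Computing $w\circ\Sigma(\varepsilon_A)$ reduces, by the definition of composition in $\mathsf{PaPreOrd}(\mathbb C)$, to pulling the clopen $A'\coprod B'\rightarrowtail A\coprod B$ back along $\varepsilon_A$; by Corollary~\ref{pullbacks-sum} this pullback is $\gamma_A$, and the resulting composite $A'\to X$ is $u$, so $w\circ\Sigma(\varepsilon_A)=\langle\gamma_A,u\rangle$, and symmetrically $w\circ\Sigma(\varepsilon_B)=\langle\gamma_B,v\rangle$.

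The real content is uniqueness, which amounts to showing that two morphisms $w_1,w_2\colon A\coprod B\to X$ of $\mathsf{Stab}(\mathbb C)$ that become equal after precomposition with each of $\Sigma(\varepsilon_A)$ and $\Sigma(\varepsilon_B)$ must coincide. Write $w_i=\langle\mu_i,g_i\rangle$ with $\mu_i\colon P_i\rightarrowtail A\coprod B$ clopen and $g_i\colon P_i\to X$. Pulling $\mu_i$ back along the two coprojections yields clopen subobjects $P_i^A\rightarrowtail A$ and $P_i^B\rightarrowtail B$ (Lemma~\ref{lemma1_clopen}(3)), and extensivity of $\mathbb C$ — together with Proposition~\ref{lemma1_coproducts} and Corollary~\ref{pullbacks-sum} to control the orders — gives $P_i\cong P_i^A\coprod P_i^B$ and $g_i=[\,g_i|_{P_i^A},\,g_i|_{P_i^B}\,]$; and, as in the previous paragraph, $w_i\circ\Sigma(\varepsilon_A)=\langle P_i^A\rightarrowtail A,\,g_i|_{P_i^A}\rangle$ and $w_i\circ\Sigma(\varepsilon_B)=\langle P_i^B\rightarrowtail B,\,g_i|_{P_i^B}\rangle$. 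The hypothesis then gives $\langle P_1^A\rightarrowtail A,\,g_1|_{P_1^A}\rangle=\langle P_2^A\rightarrowtail A,\,g_2|_{P_2^A}\rangle$, i.e.\ a congruence diagram~\eqref{C-diagram} over $A$ with some middle clopen $Q^A$ — clopen in both $P_1^A$ and $P_2^A$, with $g_1|_{P_1^A}$ and $g_2|_{P_2^A}$ agreeing on $Q^A$ and $g_i$ trivial on the complement of $Q^A$ in $P_i^A$ — and likewise a congruence diagram over $B$ with middle clopen $Q^B$. I would then check that the componentwise coproduct of these two diagrams is a congruence diagram over $A\coprod B$ with middle clopen $Q^A\coprod Q^B$, witnessing $w_1\sim w_2$, hence $w_1=w_2$ in $\mathsf{Stab}(\mathbb C)$.

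The remaining verifications are routine given the earlier results: the complement of $Q^A\coprod Q^B$ in $P_i^A\coprod P_i^B$ is the coproduct of the two complements; the $g_i$ agree on $Q^A\coprod Q^B$ because they agree on each summand; and the restriction of $g_i$ to that complement, being a copairing of two trivial morphisms, is itself trivial — here one uses that a copairing of morphisms of $\mathsf{PreOrd}(\mathbb C)$ is trivial exactly when each component is (the forward implication because trivial morphisms form an ideal, the backward one by the universal property of the coproduct, noting that each component is the restriction of the copairing to a clopen subobject, so its kernel pair $\Eq(\cdot)$ embeds into that of the copairing). The one genuinely non-formal step — and the point at which the pretopos hypothesis is really used — is the splitting $P_i\cong P_i^A\coprod P_i^B$ of an arbitrary clopen subobject of $A\coprod B$ together with the matching decomposition of $g_i$; once this is available, assembling the congruence diagrams is bookkeeping.
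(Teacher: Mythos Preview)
Your proof is correct and follows essentially the same route as the paper's: both construct the candidate factorisation as $\langle\gamma_A\coprod\gamma_B,[u,v]\rangle$, verify the factorisation equations by computing pullbacks along the coprojections, and establish uniqueness by decomposing an arbitrary partial morphism out of $A\coprod B$ along the coprojections (via extensivity) and then taking the coproduct of the two resulting congruence diagrams. The only cosmetic difference is that the paper compares its constructed factorisation against a single arbitrary competitor, whereas you compare two arbitrary factorisations; the bookkeeping is identical.
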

 \begin{proof}
 We already know that $\Sigma$ preserves the zero object, then it will suffice to prove that $\Sigma$ also preserves binary coproducts.
 Consider then the coproduct $A \coprod B$ of two internal preorders $A$ and $B$ in $ \mathsf{PreOrd}(\mathbb C)$, and the diagram 
 $$
 \xymatrix{
 A \ar[dr]_{\varepsilon_A} \ar[rrd]^{< \alpha, f>} & & \\
 &A \coprod B & C \\
B \ar[ur]^{\varepsilon_B} \ar[rru]_{<\beta , g>} & &  }
 $$
 We have to prove that there is a unique morphism $<\gamma, h> \colon A \coprod B \rightarrow C$ such that $ <\gamma, h>  \circ \varepsilon_A = < \alpha, f>$ and 
 $ <\gamma, h> \circ \varepsilon_B = < \beta, g>$ in $\mathsf{Stab}(\mathbb C)$.
 The following diagram in $\mathsf{PreOrd}(\mathbb C)$
 $$
 \xymatrix@=20pt{
  & &  {A' } \ar@{ >->}[d]^{\alpha} \ar[ddrr]^f \ar[ddll]_{\varepsilon_{A'}}&  & & \\
    & &  A  \ar[d]^{\varepsilon_A} & & \\
 {A'} \coprod {B'} \ar[rr]^{\alpha \coprod \beta}  &
 &A \coprod B & & C \\
 & & B \ar[u]_{\varepsilon_B} & & & \\
 & & B' \ar@{ >->}[u]_{\beta} \ar[rruu]_g \ar[uull]^{\varepsilon_{B'}} & & &
  }$$
  induces a unique morphism $h \colon A' \coprod B' \rightarrow C$ such that $h \circ \varepsilon_{A'} = f$ and $h \circ \varepsilon_{B'} = g$. Observe that the complement ${A'}^c$ of $A'$ (in $A$) and ${B'}^c$ of $B'$ (in $B$) are such that ${A'}^c \coprod {B'}^c$ is the complement of $A' \coprod B'$ in $A \coprod B$. Since $A'$ and $B'$ are clopen subobjects in $A$ and $B$, respectively, then so is $A' \coprod B'$ in $A \coprod B$. We then get a morphism
  $$
  \xymatrix{
   & {\, \, \, \, A' \coprod B'} \ar@{>->}[dl]_{\alpha \coprod \beta} \ar[dr]^{h} & \\ A \coprod B \ar@{.>}[rr]_{(\alpha \coprod \beta, h)} & & C}
 $$
  in $\mathsf{Stab}(\mathbb C)$. Let us first check that
  $$<\alpha \coprod \beta, h> \circ \, \varepsilon_A = < \alpha, f>.$$
 This is clear by looking at the congruence diagram 
 $$\xymatrix@=25pt{{0 \,\, } \ar@{>->}[rrr]& & & {\, \,A'}  \ar@{>->}[ld]^{\alpha} \ar[dr]_{\varepsilon_{A'}}  & & & \\
&&  { A\, \, } \ar@{=}[dl]  \ar@{>->}[dr]^{\varepsilon_A} &  & {A' \coprod B'} \ar[dl]_{\alpha \coprod \beta}  \ar[dr]^{h} & & \\
 {A' \,\, } \ar@{=}[drr]_{} \ar@{=}@/^/@<0.5ex>[rruur]^{} \ar@{>->}[r]_{} & A    & & A \coprod B   && B\\
 { 0 \,\, } \ar@{>->}[rr]_{}  & & A' \ar@{>->}[ul]_{\alpha}  \ar@/_/@<0.3ex>[urrr]_{f} &  & & & 
 } $$
 since $h  \varepsilon_{A'} = f$. Similarly, one checks that $<\alpha \coprod \beta, h> \circ \, \varepsilon_{B} = < \beta, g>$.
 To prove the uniqueness of the factorization one considers another morphism
  $$\xymatrix{& {\, \, \, \, U} \ar@{>->}[dl]_{u} \ar[dr]^{j} & \\ A \coprod B \ar@{.>}[rr]_{< u, j >}& & C}
 $$
 with the same properties as $< \alpha \coprod \beta, h>$, and the congruence diagram
 $$\xymatrix@=25pt{{\tilde{A_1} \,\, } \ar@{>->}[rrr]& & & {\, \,\tilde{A}}  \ar@{>->}[ld]^{\tilde{\alpha}} \ar[dr]_{\tilde{\varepsilon}_A}  & & & \\
&&  { A\, \, } \ar@{=}[dl]  \ar@{>->}[dr]^{\varepsilon_A} &  & {\, \, U} \ar@{>->}[dl]_{u}  \ar[dr]^{j} & & \\
 {\tilde{A_0} \,\, } \ar@{>->}[drr]_{} \ar@{>->}@/^/@<0.5ex>[rruur]^{} \ar@{>->}[r]_{} & A    & & A \coprod B   &&C\\
 { {\tilde{A}}_2 \,\, } \ar@{>->}[rr]_{}  & & A' \ar@{>->}[ul]_{\alpha}  \ar@/_/@<0.3ex>[urrr]_{f} &  & & & 
 } $$
 where the upper square is a pullback, $\tilde{A_1}$ is the complement of $\tilde{A}_0$ in $\tilde{A}$, and $\tilde{A_2}$ is the complement of $\tilde{A}_0$ in ${A'}$.
 Symmetrically, we also have the congruence diagram 
 $$\xymatrix@=25pt{{\tilde{B_1} \,\, } \ar@{>->}[rrr]& & & {\, \,\tilde{B}}  \ar@{>->}[ld]^{\tilde{\beta}} \ar[dr]_{\tilde{\varepsilon}_B}  & & & \\
&&  { B\, \, } \ar@{=}[dl]  \ar@{>->}[dr]^{\varepsilon_B} &  & {\, \, U} \ar@{>->}[dl]_{u}  \ar[dr]^{j} & & \\
 {\tilde{B_0} \,\, } \ar@{>->}[drr]_{} \ar@{>->}@/^/@<0.5ex>[rruur]^{} \ar@{>->}[r]_{} & B   & & A \coprod B   &&C\\
 { {\tilde{B}}_2 \,\, } \ar@{>->}[rr]_{}  & & B' \ar@{>->}[ul]_{\beta}  \ar@/_/@<0.3ex>[urrr]_{g} &  & & & 
 } $$
To prove the uniqueness of the factorization we have to build a new congruence diagram of the form 
\begin{equation}\label{Congruence-h-unique}
\xymatrix@=30pt{{ V_2\, \, } \ar@{>->}[rr] & & {A' \coprod B'}  \ar@{>->}[ld]^{\alpha \coprod \beta} \ar[dr]^{h}  & & \\
 {V \, \, \, } \ar@{>->}[drr] \ar@{>->}[rru] \ar@{>->}[r] & A\coprod B   & &C \\
{ V_1\, \, } \ar@{>->}[rr]_{}  & & U \ar@{>->}[ul]_{u} \ar[ru]_{j} & &
 } 
 \end{equation}
 The universality of coproducts implies that $U= \tilde{A} \coprod \tilde{B}$ as clopen subobjects of $A\coprod B$, as we see in the following diagram where the two squares are pullbacks:
 $$
 \xymatrix{\tilde{A} \ar[r]^{\tilde{\varepsilon}_A} \ar[d]_{\tilde{\alpha}} & U \ar[d]^u & \tilde{B} \ar[l]_{\tilde{\varepsilon}_B} \ar[d]^{\tilde{\beta}} \\
 A \ar[r]_-{\varepsilon_A} & A \coprod B & B. \ar[l]^-{\varepsilon_B}
}
 $$
 In diagram \ref{Congruence-h-unique}, let us then choose $V = \tilde{A}_0 \coprod \tilde{B}_0$ and check that we do get a congruence diagram.
 First observe that $h$ and $f$ coincide on $A'$, therefore also on $\tilde{A}_0$. Moreover, $j$ and $f$ coincide on $\tilde{A}_0$, hence $h$ and $j$ coincide on $\tilde{A}_0$. Similarly, $h$ and $j$ coincide on $\tilde{B}_0$, and $h\mid_{V}=j\mid_V$. 
 On the other hand $h\mid_{A'}= f \mid_{A'}$, and $f$ is trivial on $\tilde{A}_2$ (which is a subobject of $A'$), hence $h$ is trivial on $\tilde{A}_2$. Similarly one sees that $h$ is trivial on $\tilde{B}_2$, and then $h$ is trivial on $\tilde{A}_2 \coprod \tilde{B}_2$. By observing that $\tilde{A}_2 \coprod \tilde{B}_2$ is the complement of $V = \tilde{A}_0 \coprod \tilde{B}_0$ in $ A' \coprod B'$, we set $V_2 = \tilde{A}_2 \coprod \tilde{B}_2$ in diagram \ref{Congruence-h-unique}. Similarly one sees that $j$ is trivial on $V_1 = \tilde{A}_1 \coprod \tilde{B}_1$, which is the complement of $V=\tilde{A}_0 \coprod \tilde{B}_0$ in $U = \tilde{A} \coprod \tilde{B}$.
  \end{proof}
 \begin{corollary}\label{coproducts}
 The stable category $\mathsf{Stab}(\mathbb C)$ has finite coproducts that are computed as in $\mathsf{PreOrd}(\mathbb C)$.
 \end{corollary}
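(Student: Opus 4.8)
The plan is to deduce the statement directly from Proposition \ref{coproducts-preserved}, once we have observed that $\Sigma \colon \mathsf{PreOrd}(\mathbb C) \to \mathsf{Stab}(\mathbb C)$ is the identity on objects. Recall that $\Sigma = \pi \circ I$, where both $I$ and the quotient functor $\pi$ act as the identity on objects; hence the objects of $\mathsf{Stab}(\mathbb C)$ are precisely the internal preorders, and $\Sigma(A) = A$ for every object $A$. This is the only bookkeeping ingredient not already packaged into earlier results.

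For the empty coproduct I would invoke Lemma \ref{zero}: the initial object $0$ of $\mathsf{PreOrd}(\mathbb C)$ is sent by $\Sigma$ to the zero object of $\mathsf{Stab}(\mathbb C)$, which is in particular initial there. So the initial object of $\mathsf{Stab}(\mathbb C)$ exists and is computed as in $\mathsf{PreOrd}(\mathbb C)$.

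For binary coproducts, given two internal preorders $A$ and $B$, Proposition \ref{lemma1_coproducts} provides the coproduct $A \coprod B$ in $\mathsf{PreOrd}(\mathbb C)$ together with its coprojections $\varepsilon_A \colon A \to A \coprod B$ and $\varepsilon_B \colon B \to A \coprod B$. Since $\Sigma$ preserves finite coproducts by Proposition \ref{coproducts-preserved}, the pair of morphisms $\Sigma(\varepsilon_A)$ and $\Sigma(\varepsilon_B)$ forms a coproduct cocone in $\mathsf{Stab}(\mathbb C)$; and because $\Sigma$ is the identity on objects, the apex of this cocone is literally the object $A \coprod B$. Thus $\mathsf{Stab}(\mathbb C)$ has all finite coproducts and they agree, on the nose, with those of $\mathsf{PreOrd}(\mathbb C)$.

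I expect essentially no obstacle here: the real content has already been carried by Proposition \ref{coproducts-preserved}, and this corollary is a bona fide corollary. The only point deserving (trivial) care is to remark that preservation of finite coproducts by a functor which is bijective on objects forces the coproduct in the target to be given by the image object itself, rather than by some merely isomorphic copy of it; together with the compatible identification of coprojections this is exactly the assertion that coproducts are ``computed as in $\mathsf{PreOrd}(\mathbb C)$''.
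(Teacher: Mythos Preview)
Your proposal is correct and matches the paper's approach exactly: the paper gives no explicit proof for this corollary, treating it as an immediate consequence of Proposition \ref{coproducts-preserved} together with the fact that $\Sigma$ is the identity on objects. Your write-up simply makes these trivial bookkeeping steps explicit.
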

 
 \section{Short exact sequences}\label{Short exact sequences}
 The aim of this section is to show that the functor $\Sigma \colon \mathsf{PreOrd}(\mathbb C) \rightarrow \mathsf{Stab}(\mathbb C)$ sends short $\mathcal Z$-exact sequences in $\mathsf{PreOrd}(\mathbb C)$ to short exact sequences in $\mathsf{Stab}(\mathbb C)$. We already observed that the $\mathcal Z$-kernel of any morphism in $\mathsf{PreOrd}(\mathbb C)$ exists (see Lemma \ref{Prop_prekernel}):
 
  \begin{Prop}\label{preservation-kernels}
The functor $\Sigma$ sends $\mathcal Z$-kernels in $\mathsf{PreOrd}(\mathbb C)$ to kernels in $\mathsf{Stab}(\mathbb C)$.
\end{Prop}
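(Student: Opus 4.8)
The plan is to start from the description of the $\mathcal Z$-kernel given in Lemma~\ref{Prop_prekernel}: if $(f,\hat f)\colon (A,\rho)\to(B,\sigma)$ is a morphism in $\mathsf{PreOrd}(\mathbb C)$, its $\mathcal Z$-kernel is the inclusion $(1_A,\hat\varepsilon)\colon (A,\Eq(f)\cap\rho)\to(A,\rho)$. Write $K=(A,\Eq(f)\cap\rho)$ and $\varepsilon = (1_A,\hat\varepsilon)$ for brevity. I must show that $\Sigma(\varepsilon)\colon \Sigma(K)\to\Sigma(A)$ is a kernel of $\Sigma(f)$ in $\mathsf{Stab}(\mathbb C)$, i.e. it is the equalizer of $\Sigma(f)$ and the zero morphism. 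The first, easy, step is that $\Sigma(f)\circ\Sigma(\varepsilon)=\Sigma(f\circ\varepsilon)$ is a zero morphism in $\mathsf{Stab}(\mathbb C)$: indeed $f\circ\varepsilon$ is a trivial morphism in $\mathsf{PreOrd}(\mathbb C)$ (it factors through $\Eq(f)\rightrightarrows A$), so this follows from Proposition~\ref{trivial-versus-zero}.

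For the universal property, suppose $\langle u,\ell\rangle\colon D\to\Sigma(A)$ is a morphism in $\mathsf{Stab}(\mathbb C)$ with $\Sigma(f)\circ\langle u,\ell\rangle = 0$. Represent it by a partial morphism $(u,\ell)$ with $u\colon D'\rightarrowtail D$ a clopen subobject and $\ell\colon D'\to A$ a morphism in $\mathsf{PreOrd}(\mathbb C)$. The composite $\Sigma(f)\circ\langle u,\ell\rangle$ is then $\langle u, f\ell\rangle$, and by Proposition~\ref{zero-morphisms} the condition that this is $0$ says precisely that $f\ell\colon D'\to B$ is a trivial morphism in $\mathsf{PreOrd}(\mathbb C)$. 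By the definition of $\mathcal Z$-trivial and the universal property of the kernel pair, $\psi'\rightrightarrows D'\xrightarrow{\ell} A$ factors through $\Eq(f)$, where $\psi'$ is the preorder on $D'$; hence $\ell$ factors (uniquely, since $1_A$ is a monomorphism) through the $\mathcal Z$-kernel $\varepsilon$, say $\ell = \varepsilon\circ v$ with $v\colon D'\to K$ in $\mathsf{PreOrd}(\mathbb C)$ (concretely $v$ is again $\ell$ on underlying objects, now landing in $(A,\Eq(f)\cap\rho)$ because $\psi'\le\ell^*(\rho)\cap\Eq(f)$). Then $(u,v)$ is a partial morphism $D\to K$ and $\Sigma(\varepsilon)\circ\langle u,v\rangle = \langle u,\varepsilon v\rangle = \langle u,\ell\rangle$, giving existence of the factorization.

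For uniqueness I must show that if $\langle u_1,v_1\rangle$ and $\langle u_2,v_2\rangle$ are two morphisms $D\to\Sigma(K)$ in $\mathsf{Stab}(\mathbb C)$ with $\Sigma(\varepsilon)\circ\langle u_1,v_1\rangle = \Sigma(\varepsilon)\circ\langle u_2,v_2\rangle$, then they are equal. But $\Sigma(\varepsilon)$ is a monomorphism in $\mathsf{Stab}(\mathbb C)$: by Proposition~\ref{Pres-Mono} the functor $\Sigma$ preserves monomorphisms, and $\varepsilon = (1_A,\hat\varepsilon)$ has $1_A$ a monomorphism (indeed an isomorphism on the underlying object), so $\varepsilon$ is a monomorphism in $\mathsf{PreOrd}(\mathbb C)$ and hence $\Sigma(\varepsilon)$ is monic. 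Uniqueness of the factorization is then immediate. Assembling these three points — that $\Sigma(f)\circ\Sigma(\varepsilon)=0$, that every morphism killed by $\Sigma(f)$ factors through $\Sigma(\varepsilon)$, and that $\Sigma(\varepsilon)$ is monic — establishes that $\Sigma(\varepsilon)$ is the kernel of $\Sigma(f)$ in $\mathsf{Stab}(\mathbb C)$.

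The step I expect to require the most care is the factorization of $\ell$ through the $\mathcal Z$-kernel in $\mathsf{PreOrd}(\mathbb C)$: one has to check that when $f\ell$ is trivial, the relation $\psi'$ on the domain $D'$ lands not only inside $\ell^{*}(\rho)$ (automatic, since $\ell$ is a preorder morphism) but also inside $\ell^{*}(\Eq(f))$, so that the underlying map $\ell$ really is a morphism of preorders $D' \to (A,\Eq(f)\cap\rho)$; this is exactly the content of the diagram in the proof of Lemma~\ref{Prop_prekernel}, applied to the restriction $\ell\colon D'\to A$ rather than to a morphism defined on all of $D$. Everything else — compatibility with the congruence $\sim$, so that the construction of $\langle u,v\rangle$ does not depend on the chosen representative $(u,\ell)$ — follows formally from Proposition~\ref{cong} and the fact that $\varepsilon$ is a monomorphism.
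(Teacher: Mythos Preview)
Your proof is correct and follows essentially the same approach as the paper: both argue that $\Sigma(\varepsilon)$ is a monomorphism via Proposition~\ref{Pres-Mono}, translate the vanishing of $\Sigma(f)\circ\langle u,\ell\rangle$ into triviality of $f\ell$ via Proposition~\ref{zero-morphisms}, and then invoke the universal property of the $\mathcal Z$-kernel in $\mathsf{PreOrd}(\mathbb C)$ to produce the factorization $\langle u,v\rangle$. The only cosmetic difference is that you work with the explicit description of the $\mathcal Z$-kernel from Lemma~\ref{Prop_prekernel}, whereas the paper argues with an abstract $\mathcal Z$-kernel $k\colon K\to A$; the extra care you flag in the final paragraph (that $\psi'\le \ell^*(\Eq(f))$) is precisely what the paper subsumes under ``the universal property of the $\mathcal Z$-kernel''.
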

\begin{proof}
Let us consider a $\mathcal Z$-kernel $k \colon K \rightarrow A$ of a morphism $f \colon A \rightarrow B$ in $\mathsf{PreOrd}(\mathbb C)$. Since any $\mathcal Z$-kernel is a monomorphism, by Proposition \ref{Pres-Mono} we already know that $\Sigma (k)$ is a monomorphism. Given any morphism $<\mu, m> \colon M \rightarrow A$ in 
$\mathsf{Stab}(\mathbb C)$ such that $f \circ <\mu, m> = 0$ 
$$
\xymatrix{ & M \ar[dr]^{<\mu, m>} \ar@{.>}[dl]_{} & & \\
K  \ar[rr]_k& & A \ar[r]_f & B
}
$$
it will then suffice to prove the existence of a morphism $M \rightarrow K$ as above making the triangle commute. By Proposition \ref{zero-morphisms} we know that in the composition diagram 
$$ \xymatrix@=20pt{ & & M'  \ar[rd]^{m} \ar@{=}[dl] & & \\
& {\, \, \, \, M'}  \ar@{>->}[dl]_{\mu} \ar[dr]^{m}& &A \ar@{=}[dl] \ar[dr]^{f}& \\
M & & A  & &  B }$$
the arrow $f m$ is trivial (in $\mathsf{PreOrd}(\mathbb C)$). Since $k \colon K \rightarrow A$ is a $\mathcal Z$-kernel of $f$ this yields a (unique) factorization $g \colon M' \rightarrow K$ in $\mathsf{PreOrd}(\mathbb C)$ with $k  g = m$. Let us check that $< \mu, g> \colon M \rightarrow K$ is the required factorization in $\mathsf{Stab}(\mathbb C)$. Indeed the diagram
$$ \xymatrix@=20pt{ & & M'  \ar[rd]^{g} \ar@{=}[dl] & & \\
& {\, \, \, \, M'}  \ar@{>->}[dl]_{\mu} \ar[dr]^{g}& &K \ar@{=}[dl] \ar[dr]^{k}& \\
M & & K  & &  A }$$
shows that $k \circ < \mu, g> = < \mu, k  g > = <\mu, m>$, as desired.
\end{proof}
\begin{remark}
Let us mention here the fact that the stable category $\mathsf{Stab}(\mathbb C)$ has kernels, a fact that is not needed for this article, and that will be proved in \cite{BCG2}.
\end{remark}
Let us then consider the description of cokernels in the category $\mathsf{PreOrd}(\mathbb C)$. To avoid any ambiguity, let us make clear that when we refer to the transitive relation generated by a relation $\rho$, we mean -- when it exists -- the smallest transitive relation containing $\rho$.

\begin{Prop}\label{existence-cokernels}
Let $$\xymatrix{
(A, \rho) \ar[r]^{f} & (B, \sigma)  \ar[r]^q & (Q,\tau) 
}
$$
be two composable morphisms in $\mathsf{PreOrd} (\mathbb C)$. Then the following conditions are equivalent

\begin{enumerate}
\item $q \colon (B, \sigma) \rightarrow (Q,\tau)$ is the $\mathcal Z$-cokernel of $f \colon (A, \rho) \rightarrow (B, \sigma)$;
\item 
\begin{itemize}
\item[(a)]
the arrow $q$ is the coequalizer of $f  r_1$ and $f  r_2$ in $\mathbb C$
$$\xymatrix{\rho \ar@<-1ex>[r]_{r_2} \ar@<1ex>[r]^{r_1}  & A \ar[r]^f & B \ar@{->>}[r]^q & Q ;}$$ 
\item[(b)]
the transitive closure $\overline{U}$ of the relation $U= \sigma \cup f(\rho)^o$ exists in $B$;
\item[(c)]
$\tau  = q(\overline{U})$.
\end{itemize}
\end{enumerate}
\end{Prop}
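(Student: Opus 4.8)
The whole argument hinges on one translation, which I would record first: a morphism $(g,\hat g)\colon(A,\rho)\to(D,\psi)$ in $\PreOrd C$ is $\mathcal Z$-trivial if and only if $g r_1 = g r_2$ in $\mathbb C$ — equivalently, the image relation $g(\rho)$ is contained in $\Delta_D$. This is immediate from the characterization of trivial morphisms via kernel pairs recalled in the opening section: $\langle r_1,r_2\rangle$ factors through $\Eq(g)\hookrightarrow A\times A$ precisely when $g r_1 = g r_2$. In particular, for $g\colon(B,\sigma)\to(D,\psi)$ the composite $g f$ is $\mathcal Z$-trivial exactly when $g$ coequalizes $f r_1$ and $f r_2$ in $\mathbb C$; applying this to $g=q$ shows $q f$ is $\mathcal Z$-trivial iff $q$ coequalizes $f r_1,f r_2$. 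With this in hand I would prove the two implications separately.

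\textbf{$(2)\Rightarrow(1)$.} First I would verify that $(Q,\tau)$ is a genuine preorder: reflexivity is clear since $q$ is a regular epimorphism and $\Delta_B\le\overline U$, while transitivity of $\tau=q(\overline U)$ is the one delicate point. The plan is to show $\Eq(q)\le\overline U$: since $\overline U$ is a preorder containing both $f(\rho)$ (as $f(\rho)\le\sigma\le U\le\overline U$) and $f(\rho)^\circ$, the relation $\overline U\cap\overline U^\circ$ is an effective equivalence relation (exactness of $\mathbb C$) containing $f(\rho)$; its quotient $p\colon B\to B/(\overline U\cap\overline U^\circ)$ therefore coequalizes $f r_1, f r_2$, hence factors through the coequalizer $q$ by $(a)$, giving $\Eq(q)\le\Eq(p)=\overline U\cap\overline U^\circ\le\overline U$. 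Because $\Eq(q)\le\overline U$ and $\overline U$ is transitive, $\overline U$ is saturated with respect to $\Eq(q)$ ($\overline U=\overline U\cdot\Eq(q)\cdot\overline U$), and then $q(\overline U)\cdot q(\overline U)=q(\overline U\cdot\Eq(q)\cdot\overline U)\le q(\overline U)$, which is the transitivity of $\tau$. Next, $qf$ is $\mathcal Z$-trivial by $(a)$ and the translation above, and $q$ is a morphism of preorders since $\sigma\le\overline U$. For the universal property, given $g\colon(B,\sigma)\to(D,\psi)$ with $g f$ $\mathcal Z$-trivial, $(a)$ provides a unique $\overline g\colon Q\to D$ in $\mathbb C$ with $\overline g q=g$; then $\overline g(\tau)=g(\overline U)\le\psi$ because $g(U)=g(\sigma)\cup g(f(\rho))^\circ\le\psi$ (the first piece since $g$ is monotone, the second since $g(f(\rho))\le\Delta_D$) and $\psi$ is transitive, so $\overline g$ is a morphism in $\PreOrd C$, necessarily unique since $q$ is an epimorphism. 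Hence $q$ is the $\mathcal Z$-cokernel of $f$.

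\textbf{$(1)\Rightarrow(2)$.} For $(a)$: $q$ coequalizes $f r_1,f r_2$ since $qf$ is $\mathcal Z$-trivial; and if $h\colon B\to C$ in $\mathbb C$ satisfies $h f r_1=h f r_2$, I would equip $C$ with the indiscrete preorder $C\times C$, note that $h\colon(B,\sigma)\to(C,C\times C)$ is then a morphism with $h f$ $\mathcal Z$-trivial, and invoke the universal property of the $\mathcal Z$-cokernel $q$ to factor it uniquely as $\overline h\circ q$; since being a morphism into $(C,C\times C)$ is no constraint, $\overline h$ is the unique $\mathbb C$-factorization, so $q$ is the coequalizer of $f r_1,f r_2$ in $\mathbb C$. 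For $(b)$ and $(c)$, set $\widetilde\tau:=(q\times q)^*(\tau)$, a transitive relation on $B$ (preimage of a transitive relation) which contains $U=\sigma\cup f(\rho)^\circ$ (indeed $\sigma\le\widetilde\tau$ as $q$ is monotone, and $f(\rho)^\circ\le\Eq(q)\le\widetilde\tau$). I claim $\widetilde\tau$ is the transitive closure of $U$, which already yields $(b)$. For minimality, let $W$ be any transitive relation with $U\le W$; then $W$ is a preorder ($\Delta_B\le\sigma\le W$), the quotient argument from the previous paragraph applied to $W\cap W^\circ$ gives $\Eq(q)\le W$, so $W$ is $\Eq(q)$-saturated, $q(W)$ is a preorder on $Q$ with $q\colon(B,\sigma)\to(Q,q(W))$ a morphism for which $qf$ is $\mathcal Z$-trivial, and the universal property of $q$ forces $1_Q\colon(Q,\tau)\to(Q,q(W))$ to be a morphism, i.e. $\tau\le q(W)$; applying $(q\times q)^*$ and using $(q\times q)^*(q(W))=W$ for the saturated $W$ gives $\widetilde\tau\le W$. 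Finally $(c)$: $\overline U=\widetilde\tau=(q\times q)^*(\tau)$, and since $q\times q$ is a regular epimorphism (a composite of pullbacks of the regular epi $q$, hence a regular epi by regularity of $\mathbb C$) one has $q(\overline U)=q((q\times q)^*(\tau))=\tau$.

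\textbf{Main obstacle.} The principal difficulty is the relation-algebra bookkeeping carried out without elements: the saturation identity $\overline U=\overline U\cdot\Eq(q)\cdot\overline U$, the fact that an $\Eq(q)$-saturated relation $W$ satisfies $(q\times q)^*(q(W))=W$ and that $q(W)$ is then a well-defined transitive relation on $Q$, and that $q\times q$ is a regular epi with $q\bigl((q\times q)^*S\bigr)=S$, all have to be established using only exactness, universality of images, and pullback-stability of regular epimorphisms in a pretopos. The second, more structural, point is that in the direction $(1)\Rightarrow(2)$ the transitive closure $\overline U$ is not available a priori — its existence is part of the conclusion — so the argument must exhibit it concretely as $(q\times q)^*(\tau)$ and verify directly that this relation is the least transitive relation containing $U$.
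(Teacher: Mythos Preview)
Your proof is correct and follows essentially the same route as the paper: the indiscrete-preorder trick to get $(a)$, the identification of $\overline U$ with $(q\times q)^*(\tau)=q^{-1}(\tau)$, and the minimality argument via the universal property applied to $q\colon(B,\sigma)\to(Q,q(W))$ for an arbitrary transitive $W\supseteq U$. The only notable difference is that you explicitly verify in $(2)\Rightarrow(1)$ that $\tau=q(\overline U)$ is transitive (via $\Eq(q)\le\overline U$ and saturation), a point the paper leaves implicit, and you obtain $\Eq(q)\le W$ by quotienting by the effective equivalence relation $W\cap W^\circ$ rather than invoking, as the paper does, that $\Eq(q)$ is the equivalence relation generated by $f(\rho)\cup f(\rho)^o$; both routes are valid in a pretopos.
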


\begin{proof}
$(1) \Rightarrow (2)$ \\
Let us first check condition $(a)$. If $q \colon (B, \sigma) \rightarrow (Q,\tau)$ is the $\mathcal Z$-cokernel of $f \colon (A, \rho) \rightarrow (B, \sigma)$, 
there is factorization $q'  f' \colon (A, \rho)  \rightarrow  (Z, \Delta_Z) \rightarrow  (Q, \tau)$ of $q f \colon (A, \rho) \rightarrow (Q, \tau)$ through a trivial object $(Z, \Delta_Z)$. This implies that $$q  f r_1 = q'   f'  r_1 = q'  1_Z  \hat{ f'} = q'  f'  r_2 = q  f r_2,$$
where $\hat{f'} \colon \rho \rightarrow Z$ makes the following diagram commute:
$$ \label{morphism}
\xymatrix{  \rho \ar@<.5ex>[d]^{r_2} \ar@<-.5ex>[d]_{r_1}  \ar[r]^{\hat{f'}}  & Z \ar@<.5ex>[d]^{1_Z} \ar@<-.5ex>[d]_{1_Z}  \\
 A \ar[r]_{f'} & {Z.} &
}
$$
Next consider the following diagram in $\mathbb C$
$$\xymatrix{\rho \ar@<-1ex>[r]_{r_2} \ar@<1ex>[r]^{r_1}  & A \ar[r]^f & B \ar[dr]_p \ar[r]^q & Q \\ & & & P}$$ 
where $p  f r_1 = p  f  r_2$ in $\mathbb C$.
We can define the indiscrete preorder structure $(P, P \times P)$ on $P$, so that the universal property of the $\mathcal Z$-cokernel $q$ induces an arrow $r$ in $\mathsf{PreOrd}(\mathbb C)$ such that $r q = p$:
$$\xymatrix{(A,\rho) \ar[r]_f  & (B, \sigma) \ar[dr]_p \ar[r]^q &  (Q, \tau)  \ar@{.>}[d]^r \\  & & (P, P \times P).}$$ 
The uniqueness of this factorization in $\mathbb C$ follows from the fact that any other factorization $r' \colon Q \rightarrow P$ in $\mathbb C$ induces an arrow $r' \colon (Q, \tau) \rightarrow (P, P \times P)$ in $\mathsf{PreOrd}(\mathbb C)$. It follows that $q$ is the coequalizer of $f r_1$ and $f  r_2$ in the category $\mathbb C$.\\
Observe then that $q(q^{-1}(\tau))  = \tau$, since $q$ is a regular epimorphism. To complete the proof of $(1) \Rightarrow (2)$ it will suffice to prove that $q^{-1}(\tau) = \overline{U}$, i.e. it is the smallest preorder on $B$ containing both $\sigma$ and $f(\rho^o)$.
First note that $q^{-1}(\tau)$ is a preorder containing $\sigma$, since $q$ is a morphism in $\mathsf{PreOrd} (\mathbb C)$. Moreover, $q^{-1}(\tau)$ contains $q^{-1}(\Delta_Q) = \mathsf{Eq}(q)$, since $\tau$ is reflexive. This implies that $f(\rho^o) \le \mathsf{Eq}(q) \le q^{-1}(\tau)$, which is a transitive relation since so is $\tau$.
Let $\alpha$ be any transitive relation on $B$ containing both $\sigma$ and $f(\rho)^o$ (note that $\alpha$ is in particular a preorder). The kernel pair $\mathsf{Eq}(q)$ of $q$ is the transitive relation generated by $f(\rho) \cup f(\rho)^o$, and this latter is clearly contained in $\sigma \cup f(\rho)^o \le \alpha$. This implies that
$$\mathsf{Eq}(q) \le \overline{\sigma \cup f(\rho)^o} \le \overline{\alpha} = \alpha.$$
Consider then the diagram
$$\xymatrix{(A,\rho) \ar[r]^f  & (B, \sigma) \ar[dr]_q \ar[r]^q &  (Q, \tau)  \ar@{.>}[d]^{1_Q} \\  & & (Q, q(\alpha))}$$ 
where $q(\alpha)$ is a preorder on $Q$ (since $\mathsf{Eq}(q) \le \alpha$), and $q f \colon (A, \rho) \rightarrow (Q, q(\alpha))$ is still a $\mathcal Z$-trivial morphism. The universal property of the $\mathcal Z$-cokernel $q$ yields a unique morphism in $\mathsf{PreOrd}(\mathbb C)$ making the right hand triangle commute, and this morphism has to be the identity on $Q$. This means that $\tau  \le q(\alpha)$, hence $q^{-1}(\tau)  \le q^{-1}( q(\alpha))$. But $\mathsf{Eq}(q) \le \alpha $, hence $q^{-1}( q(\alpha))= \alpha$ (see the proof of Lemma $1.3$ in \cite{FFG2}, for instance), and 
 $q^{-1}(\tau)  \le  \alpha$, as desired. \\

$(2) \Rightarrow (1)$ \\
Given $f \colon (A, \rho) \rightarrow (B, \sigma)$, we define $q$ as in $(2)(a)$, and $\tau = q(\overline{U})$ as in $(2)(c)$, where $U = \sigma \cup f(\rho)^o$ and its transitive closure $\overline{U}$ exists by $(2)(b)$.
By construction the morphism $q f \colon (A, \rho) \rightarrow (Q, \tau)$ is $\mathcal Z$-trivial. To prove the universal property of the $\mathcal Z$-cokernel consider the diagram $$\xymatrix{(A,\rho) \ar[r]^f  & (B, \sigma) \ar[dr]_p \ar[r]^q &  (Q, \tau)  \ar@{.>}[d]^{\pi} \\  & & (P, \beta)}$$
where $p$ has the property that $p f$ is a $\mathcal Z$-trivial morphism. The condition $2(a)$ implies that there is a unique morphism $\pi$ such that $\pi q = p$ in $\mathbb C$. It remains to prove that $\pi (\tau) \subset \beta$, since this will show that $\pi$ is a morphism in $\mathsf{PreOrd}(\mathbb C)$. Now, one obviously has that $U \le p^{-1}(\beta)$, and therefore $\overline{U} \le p^{-1}(\beta)$ since $p^{-1}(\beta)$ is transitive.
Accordingly,
$$\pi(\tau) = \pi (q(\overline{U})) = p(\overline{U}) \le p (p^{-1}(\beta)) \le \beta,$$
and the proof is complete.
\end{proof}
The following new notion will be useful for our main results:
\begin{definition}
A $\tau$-pretopos is a pretopos $\mathbb C$ with the property that the transitive closure of any relation on an object exists in $\mathbb C$.
\end{definition}

\begin{Prop}\label{existence-tau-cok}
When $\mathbb C$ is a $\tau$-pretopos the category $\mathsf{PreOrd}(\mathbb C)$ has all $\mathcal Z$-cokernels.
\end{Prop}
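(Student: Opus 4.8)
The plan is to deduce the statement directly from the characterization of $\mathcal Z$-cokernels in Proposition \ref{existence-cokernels}. Fix an arbitrary morphism $f \colon (A,\rho) \to (B,\sigma)$ in $\mathsf{PreOrd}(\mathbb C)$; I will construct a morphism $q \colon (B,\sigma) \to (Q,\tau)$ satisfying conditions $(2)(a)$, $(2)(b)$ and $(2)(c)$ of that proposition, which by the implication $(2) \Rightarrow (1)$ is then the $\mathcal Z$-cokernel of $f$.

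For condition $(2)(a)$ I must exhibit the coequalizer of $f r_1$ and $f r_2$ in $\mathbb C$. Write $f(\rho)$ for the image of $\langle f r_1, f r_2\rangle \colon \rho \to B \times B$; since $\rho \to f(\rho)$ is a regular epimorphism, in particular an epimorphism, the coequalizer of $f r_1$ and $f r_2$ agrees with the coequalizer of the two projections $f(\rho) \rightrightarrows B$, which is the quotient of $B$ by the smallest equivalence relation containing $f(\rho)$. Now $V := f(\rho)\cup f(\rho)^o\cup \Delta_B$ is a reflexive symmetric relation on $B$ (a finite union of subobjects of $B\times B$, available since $\mathbb C$ is coherent), and as $\mathbb C$ is a $\tau$-pretopos its transitive closure $\overline V$ exists. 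The relation $\overline V$ is again reflexive and symmetric: $(\overline V)^o$ is transitive and contains $V^o = V$, so $\overline V \le (\overline V)^o$ and hence $\overline V = (\overline V)^o$; thus $\overline V$ is an equivalence relation. By exactness of $\mathbb C$ we may form the quotient $q \colon B \to Q := B/\overline V$, and one checks in the usual way that $q$ is the coequalizer of $f r_1$ and $f r_2$ and that $\mathsf{Eq}(q) = \overline V$.

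Condition $(2)(b)$ holds because $\mathbb C$ is a $\tau$-pretopos: the transitive closure $\overline U$ of $U = \sigma \cup f(\rho)^o$ exists, and $\overline U$ is a preorder on $B$ since $\Delta_B \le \sigma \le U$. Following $(2)(c)$, set $\tau := q(\overline U)$. The crucial remark is that $\mathsf{Eq}(q) = \overline V \le \overline U$: indeed $V \subseteq U$, because $f(\rho) \le \sigma$ (as $f$ is a morphism of preorders) and $\Delta_B \le \sigma$, so the inclusion follows by monotonicity of the transitive closure. Granting $\mathsf{Eq}(q) \le \overline U$, the subobject $\tau = q(\overline U)$ of $Q \times Q$ contains $q(\Delta_B) = \Delta_Q$, satisfies $q^{-1}(\tau) = \overline U$ exactly as in \cite[Lemma~1.3]{FFG2}, and is transitive since $\overline U$ is; hence $\tau$ is a preorder on $Q$. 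With $(2)(a)$, $(2)(b)$ and $(2)(c)$ verified, Proposition \ref{existence-cokernels} shows that $q \colon (B,\sigma) \to (Q,\tau)$ is the $\mathcal Z$-cokernel of $f$, and since $f$ was arbitrary, $\mathsf{PreOrd}(\mathbb C)$ has all $\mathcal Z$-cokernels.

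Essentially everything here is routine bookkeeping with relations; the only conceptual input beyond Proposition \ref{existence-cokernels} is the observation that the $\tau$-pretopos hypothesis simultaneously provides the transitive closure needed for $(2)(b)$ and, through the equivalence-relation completion $\overline V$, the coequalizer needed for $(2)(a)$. The one place where a little care is required is checking that $\tau = q(\overline U)$ is a genuine preorder on $Q$, which hinges on the inclusion $\mathsf{Eq}(q) \le \overline U$.
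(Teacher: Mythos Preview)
Your proof is correct and follows essentially the same approach as the paper: both invoke Proposition \ref{existence-cokernels} and supply the missing ingredient for $(2)(a)$ by forming the transitive closure of $f(\rho)\cup f(\rho)^o\cup \Delta_B$, checking it is an equivalence relation, and using exactness to obtain the quotient. The paper's version is terser, leaving the verification that $\tau=q(\overline U)$ is a preorder to the $(2)\Rightarrow(1)$ argument of Proposition \ref{existence-cokernels}, while you spell this out explicitly via the inclusion $\mathsf{Eq}(q)\le\overline U$; but the substance is the same.
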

\begin{proof}
This follows from Proposition \ref{existence-cokernels}, provided we prove that the coequalizer in condition $(2)(a)$ exists. With the same notations as in that Proposition, let us consider the relation $f(\rho) \cup f(\rho)^o \cup \Delta_B$ on $B$ and its transitive closure $\overline{f(\rho) \cup f(\rho)^o \cup \Delta_B} = W$. It is easy to check that $W$ is the equivalence relation generated by $f(\rho) \cup f(\rho)^o \cup \Delta_B$: it suffices to observe that $W \cap W^o$ is a transitive relation containing $f(\rho) \cup f(\rho)^o \cup \Delta_B$, hence $W = W \cap W^o$. Since $W$ is also reflexive and symmetric, it is the smallest equivalence relation containing $f(\rho) \cup f(\rho)^o \cup \Delta_B$, and its coequalizer $B/W$ exists. 
\end{proof}
A $\sigma$-pretopos is one admitting denumerable unions of subobjects, that are preserved by pullbacks. It is well-known \cite{Elep} that any $\sigma$-pretopos is a $\tau$-pretopos, as is also any pretopos with arbitrary intersections of subobjects (in this case the transitive closure of a relation is obtained as the intersection of all the transitive relations containing it). In a $\sigma$-pretopos all finite colimits exist and are universal. In particular this is the case for the coequalizer of Proposition \ref{existence-cokernels}.

\begin{c-example}\label{HComp}
The category $\mathsf{HComp}$ of compact Hausdorff spaces is a $\tau$-pretopos that is not a $\sigma$-pretopos.
\end{c-example}
\begin{proof}
It is clear that the intersection of closed subspaces is closed, hence the intersection of subobjects is computed as in $\mathsf{Set}$. This implies that $\mathsf{HComp}$ is a $\tau$-pretopos. The same is true for the finite joins, but not for the denumerable ones. Indeed, consider for instance the space $X = [0, 2]$ in $\mathbb R$, $X_n = [1, 1 + \frac{1}{n}]$, for $n \in {\mathbb N}^*$. Observe that in $\mathsf{Set}$ one has $\cup_{n \in {\mathbb N}^*} X_n= ] 1,2]$, whereas $\vee_{n \in {\mathbb N}^*} X_n= [ 1,2]$ in $\mathsf{HComp}$. Observe then $[0,1] \cap [1,2] = \{1\}$, and $\bigvee_{n \in {\mathbb N}^*} [0, 1] \cap X_n= \emptyset,$ showing that  denumerable joins in $\mathsf{HComp}$ are not preserved under inverse images.
\end{proof}

\begin{Prop}
Any elementary topos is a $\tau$-pretopos.
\end{Prop}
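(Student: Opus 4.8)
The plan is to reduce the statement to two classical facts: that every elementary topos is already a pretopos, and that in a topos one can form the intersection of an arbitrary family of subobjects indexed by an object of the topos. Granted the latter, the transitive closure of a relation $R$ on an object $X$ is constructed exactly as in the remark above for pretoposes with arbitrary intersections of subobjects, namely as the intersection of all transitive relations on $X$ containing $R$; so the only thing requiring genuine argument is that this ``large'' intersection makes sense internally. Note that one cannot simply take $\bigcup_{n\ge 1}R^{n}$, since an elementary topos need not possess a natural numbers object.

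First I would recall that any elementary topos $\mathbb C$ is a pretopos: it is Barr-exact, because every internal equivalence relation in a topos is effective, and it has disjoint and universal finite coproducts (see \cite{Elep, MR}). Hence it suffices to produce, for every relation $\langle r_{1},r_{2}\rangle\colon R\hookrightarrow X\times X$ in $\mathbb C$, its transitive closure.

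For this I would work with the power object $P(X\times X)$ (the exponential $\Omega^{X\times X}$, where $\Omega$ is the subobject classifier) and consider the subobject $T\hookrightarrow P(X\times X)$ consisting of those relations $S$ on $X$ that contain $R$ and are transitive. Since this property is expressed by a formula of the internal (Mitchell--B\'enabou) language of $\mathbb C$ --- namely the conjunction of $\forall (x,y)\,\bigl((x,y)\in R\Rightarrow (x,y)\in S\bigr)$ and $\forall (x,y,z)\,\bigl(((x,y)\in S\wedge (y,z)\in S)\Rightarrow (x,z)\in S\bigr)$ --- it does cut out a genuine subobject $T$. Pulling back the membership subobject $M\hookrightarrow (X\times X)\times P(X\times X)$ along $(X\times X)\times T$ gives a subobject $E\hookrightarrow (X\times X)\times T$, and I would then set $\overline R:=\Pi_{p}(E)\hookrightarrow X\times X$, where $p\colon (X\times X)\times T\to X\times X$ is the projection and $\Pi_{p}$ is the right adjoint of $p^{*}$; this right adjoint exists because every elementary topos is locally cartesian closed. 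Internally $\overline R=\{(x,y)\in X\times X\mid \forall S\in T.\ (x,y)\in S\}$, i.e. $\overline R=\bigcap_{S\in T}S$.

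It then remains to check, by a routine and sound internal-logic argument, that $\overline R$ is the transitive closure of $R$: it contains $R$ because every $S\in T$ does; it is transitive because $(x,y),(y,z)\in\overline R$ forces $(x,y),(y,z)\in S$ for every $S\in T$, whence $(x,z)\in S$ by transitivity of $S$, so $(x,z)\in\overline R$; and it is the smallest transitive relation containing $R$ because any such $\alpha$ satisfies $\alpha\in T$, hence $\overline R\le\alpha$. The only delicate point, and the one I would treat most carefully, is the existence of the $T$-indexed intersection $\bigcap_{S\in T}S$; this is precisely the internal completeness of the subobject posets of a topos, equivalently the existence of the $\Pi$-functors, and everything else is bookkeeping in the internal language.
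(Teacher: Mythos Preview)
Your proposal is correct and follows essentially the same approach as the paper: both form the subobject $T\hookrightarrow \Omega^{X\times X}$ of transitive relations containing $R$ via the internal language, and then take the internal intersection over $T$. The only cosmetic difference is that the paper phrases this intersection as the composite $\bigcap\circ\ulcorner T\urcorner\colon 1\to\Omega^{\Omega^{X\times X}}\to\Omega^{X\times X}$ using the internal locale structure of $\Omega^{X\times X}$, whereas you spell it out via the right adjoint $\forall_{p}$ (your $\Pi_{p}$) to $p^{*}$; these amount to the same construction.
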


\begin{proof}
Let $R\subseteq A\times A$ be a relation on the object $A$ in an elementary topos. The object $\Omega^{A\times A}$ of internal relations on $A$ is an internal locale and in particular, is provided with the internal intersection operation 
$\bigcap\colon \Omega^{\Omega^{A\times A}}\rightarrow \Omega^{A\times A}$
(see \cite{Elep}). Using the internal logic of the topos, one considers then the object of internal transitive relations on $A$ containing $R$
$$
T=\{S\mid (R\subseteq S)\wedge
(\forall a~\forall b~\forall c~(a,b)\in S \wedge (b,c)\in S \Rightarrow (a,c)\in S)\}
\subseteq \Omega^{A\times A}
$$
where $S$ is a variable of type $\Omega^{A\times A}$ and $a$, $b$, $c$ are variables of type $A$, while $R$ is a constant of type $\Omega^{A\times A}$.
The composite with $\bigcap$ of the corresponding global element of $\Omega^{\Omega^{A\times A}}$
$$
\bigcap \circ \ulcorner T \urcorner \colon 
\boldmath1 
\rightarrow
\Omega^{\Omega^{A\times A}}\rightarrow \Omega^{A\times A}
$$
yields a global element of $\Omega^{A\times A}$, that is, an actual subobject $\overline R\subseteq A\times A$. The routine set-theoretic proof showing that $\overline R$ is the smallest transitive relation on $A$ containing $R$ applies as such in the internal logic of the topos.
\end{proof}

\begin{remark}
When the topos has a Natural Number Object, $\overline R$ can also be defined as the object of those pairs of elements of $A$ which can be joined by a finite sequence of pairs in $R$.
\end{remark}

\begin{lemma}\label{pullback-clopen}
Let $\mathbb C$ be a pretopos, $f \colon A \rightarrow B$ a morphism in $\mathsf{PreOrd}(\mathbb C)$, and $\xymatrix{{B_1 \, \, } \ar@{>->}[r]^{\beta_1} &    B}$  a clopen subobject. If $B_2$ is the complement of $B_1$ in $B$, and the $\mathcal Z$-cokernels $q_i$ of $f_i \colon f^{-1} (B_i) \rightarrow B_i$ exist (for $i= 1, 2$), then we have the following diagram
$$\xymatrix@=35pt{f^{-1} (B_i) \ar[r]^{f_i} \ar@{>->}[d]_{\alpha_i} & B_i \ar@{>->}[d]_{\beta_i} \ar[r]^{q_i} & Q_i \ar@{>->}[d]^{\gamma_i} \\
A \ar[r]_f & B \ar[r]_{q_1 \coprod q_2}& Q 
}
$$
where the squares are pullbacks, the vertical morphisms are clopen subobjects, $q_i = {\mathcal Z}$-$\mathsf{coker}(f_i)$, $Q = Q_1 \coprod Q_2$, and $q_1 \coprod q_2 = {\mathcal Z}$-$\mathsf{coker}(f)$.
\end{lemma}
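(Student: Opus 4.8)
The plan is to observe that the clopen subobject $\beta_1\colon B_1\hookrightarrow B$, together with its complement $B_2$, forces the entire configuration to split as a binary coproduct, and then to check the universal property of the $\mathcal Z$-cokernel one summand at a time.

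First I would set up the decomposition. Since $B_1$ and $B_2$ are complementary clopen subobjects of $(B,\sigma)$ and $f^{\ast}$ preserves complements and clopen subobjects (Remark~\ref{complementi} and Lemma~\ref{lemma1_clopen}), the subobjects $\alpha_i\colon A_i:=f^{-1}(B_i)\hookrightarrow A$ are complementary and clopen in $(A,\rho)$, so that $(A,\rho)\cong (A_1,\rho_{A_1})\coprod (A_2,\rho_{A_2})$ by Lemma~\ref{lemma2_clopen} and Proposition~\ref{lemma1_coproducts}. Writing $f_i\colon A_i\to B_i$ for the corestriction of $f\alpha_i$ (which is a morphism of preorders onto $(B_i,\sigma_{B_i})$), both $f\alpha_i$ and $(f_1\coprod f_2)\alpha_i$ equal $\beta_i f_i$, hence $f=f_1\coprod f_2$, and Corollary~\ref{pullbacks-sum} shows that the left-hand squares in the statement are pullbacks. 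I then set $Q:=Q_1\coprod Q_2$ with coprojections $\gamma_i\colon Q_i\to Q$; since coproducts in a pretopos are disjoint, each $\gamma_i$ is a complemented, hence clopen, subobject by Corollary~\ref{complemented-sub}, and with $q:=q_1\coprod q_2\colon B\to Q$ the right-hand squares are pullbacks, again by Corollary~\ref{pullbacks-sum}. It remains to prove that $q$ is the $\mathcal Z$-cokernel of $f$.

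Next, $q f=(q_1\coprod q_2)(f_1\coprod f_2)=(q_1f_1)\coprod (q_2 f_2)$ is trivial: each $q_if_i$ factors through a discrete equivalence relation $(Z_i,\Delta_{Z_i})$, so $qf$ factors through $(Z_1,\Delta_{Z_1})\coprod (Z_2,\Delta_{Z_2})\cong (Z_1\coprod Z_2,\Delta_{Z_1\coprod Z_2})$, which again lies in $\mathcal Z$. For the universal property, let $p\colon (B,\sigma)\to (P,\beta)$ be such that $pf$ is trivial, and put $p_i:=p\beta_i\colon B_i\to P$. Since the left-hand square is a pullback we have $\beta_i f_i=f\alpha_i$, so $p_i f_i=pf\alpha_i$ is trivial because trivial morphisms form an ideal; as $q_i=\mathcal Z$-$\mathsf{coker}(f_i)$, this yields a unique $\pi_i\colon Q_i\to P$ in $\mathsf{PreOrd}(\mathbb C)$ with $\pi_i q_i=p_i$. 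Let $\pi\colon Q\to P$ be the morphism of preorders with $\pi\gamma_i=\pi_i$ induced by the coproduct $Q=Q_1\coprod Q_2$; then $\pi q\beta_i=\pi\gamma_i q_i=\pi_i q_i=p_i=p\beta_i$ for $i=1,2$, hence $\pi q=p$. If also $\pi' q=p$, then $\pi'\gamma_i q_i=\pi' q\beta_i=p\beta_i=\pi_i q_i$, and since $q_i$ is a regular epimorphism in $\mathbb C$ by Proposition~\ref{existence-cokernels} it is an epimorphism in $\mathsf{PreOrd}(\mathbb C)$, whence $\pi'\gamma_i=\pi_i$ and so $\pi'=\pi$. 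Thus $q=q_1\coprod q_2$ is the $\mathcal Z$-cokernel of $f$, completing the proof.

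I do not expect a genuinely hard step here: the whole argument is ``componentwise'' bookkeeping, and the only point requiring care is that every ingredient used — preimages, complements, binary coproducts, discreteness of equivalence relations, and regular epimorphisms — is compatible with the coproduct decomposition $B=B_1\coprod B_2$. This compatibility is exactly what extensivity of the pretopos provides, via Proposition~\ref{lemma1_coproducts} and Corollaries~\ref{complemented-sub} and~\ref{pullbacks-sum}.
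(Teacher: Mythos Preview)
Your proof is correct and follows essentially the same approach as the paper's: decompose everything along the clopen splitting $B=B_1\coprod B_2$, observe $f=f_1\coprod f_2$, show $qf=(q_1f_1)\coprod(q_2f_2)$ is trivial, and verify the universal property summand-wise using the $\mathcal Z$-cokernel property of each $q_i$. The only cosmetic differences are that the paper notes the left-hand squares are pullbacks ``by construction'' (which is immediate from the definition of $f^{-1}(B_i)$), and deduces uniqueness directly from $q=q_1\coprod q_2$ being an epimorphism rather than arguing on each component; your slightly more explicit justifications are all correct.
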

\begin{proof}
The left-hand squares are pullbacks by construction, and by corollaries \ref{complemented-sub} and \ref{pullbacks-sum} we know that the right-hand squares are pullbacks and each $\gamma_i$ is a clopen subobject. It remains to prove that $q_1 \coprod q_2$ is the $\mathcal Z$-cokernel of $f$. The universality of coproducts gives $A \cong f^{-1}(B_1) \coprod f^{-1}(B_2)$ thus $f = f_1 \coprod f_2$ and then 
$$
q f = (q_1 \coprod q_2)  (f_1 \coprod f_2) = q_1 f_1 \coprod q_2  f_2.
$$
is a trivial morphism as a coproduct of trivial morphisms. Let then $p \colon B \rightarrow C$ be such that $p  f$ is a trivial morphism. 
Then the equalities 
$$p  \beta_i  f_i = p  f  \alpha_i $$
imply that there are factorizations $r_i \colon Q_i \rightarrow C$ such that $r_i  q_i = p  \beta_i$, for $i \in \{1,2\}$. These morphisms $r_1, r_2$ induce a unique morphism $r \colon Q = Q_1 \coprod Q_2 \rightarrow C$ with $r \gamma_i = r_i$. Accordingly, 
$$r  q  \beta_i = r  \gamma_i  q_i = r_i q_i = p \beta_i, $$
and this implies that $r  q = p$. The uniqueness of the factorization then follows from the fact that $q_1$ and $q_2$ are epimorphisms, hence so is $q = q_1 \coprod q_2$.
\end{proof}
We leave the simple proof of the following result to the reader

\begin{lemma}
If $q \colon B \rightarrow Q$ is a $\mathcal Z$-cokernel in $\mathsf{PreOrd}(\mathbb C)$ and $h  q$ is a trivial morphism, then $h$ is a trivial morphism.
\end{lemma}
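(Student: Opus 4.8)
The plan is to combine the universal property of the $\mathcal Z$-cokernel with the fact that a $\mathcal Z$-cokernel is an epimorphism in $\mathbb C$. Write $q \colon (B,\sigma) \to (Q,\tau)$ and let $h \colon (Q,\tau) \to (C,\psi)$ be the morphism in question. Since $q$ is a $\mathcal Z$-cokernel, there is a morphism $f \colon (A,\rho) \to (B,\sigma)$ with $q$ the $\mathcal Z$-cokernel of $f$; by Proposition~\ref{existence-cokernels}, condition $(2)(a)$, $q$ is the coequalizer of $f r_1$ and $f r_2$ in $\mathbb C$, so $q$ is in particular an epimorphism of $\mathbb C$.

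First I would unfold the hypothesis that $h \circ q$ is trivial: by definition it factors as $(B,\sigma) \xrightarrow{g} (Z,\Delta_Z) \xrightarrow{e} (C,\psi)$ through a discrete equivalence relation $(Z,\Delta_Z) \in \mathcal Z$. Because $g$ has codomain in $\mathcal Z$ it is itself trivial, and since the trivial morphisms form an ideal, the composite $g \circ f \colon (A,\rho) \to (Z,\Delta_Z)$ is trivial as well. Now apply the universal property of the $\mathcal Z$-cokernel $q$ to the morphism $g$: as $g \circ f$ is trivial, $g$ factors (uniquely) as $g = \bar g \circ q$ for some $\bar g \colon (Q,\tau) \to (Z,\Delta_Z)$ in $\mathsf{PreOrd}(\mathbb C)$.

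It then remains to observe that $h \circ q = e \circ g = (e \circ \bar g) \circ q$, and since $q$ is an epimorphism in $\mathbb C$ — and a morphism of internal preorders is determined by its underlying map — we conclude $h = e \circ \bar g$. Thus $h$ factors through the trivial object $(Z,\Delta_Z)$ and is therefore a trivial morphism, as claimed. The argument is essentially routine; the only point demanding a little care is the cancellation of $q$ in the last step, which genuinely requires $q$ to be epic in the base category $\mathbb C$ (epimorphisms in $\mathsf{PreOrd}(\mathbb C)$ would not be enough, as noted in Section~1), and this is precisely what Proposition~\ref{existence-cokernels} supplies. I do not expect any real obstacle.
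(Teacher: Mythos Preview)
Your argument is correct, and in fact the paper omits the proof entirely (``We leave the simple proof of the following result to the reader''), so there is nothing to compare against; your write-up would serve as a perfectly good proof of the lemma.

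One small remark: the detour through Proposition~\ref{existence-cokernels} to establish that $q$ is epic in $\mathbb C$ is unnecessary, and the parenthetical comment that ``epimorphisms in $\mathsf{PreOrd}(\mathbb C)$ would not be enough'' is not right. Any $\mathcal Z$-cokernel is automatically an epimorphism in $\mathsf{PreOrd}(\mathbb C)$, simply by the uniqueness clause in its universal property: if $aq=bq$ then both $a$ and $b$ are factorizations of $aq$ through $q$ (since $(aq)f$ is trivial), hence $a=b$. That is already enough to cancel $q$ in the equation $hq=(e\bar g)q$. The remark in Section~1 that you cite concerns the failure of pullback stability of regular epimorphisms, which is unrelated. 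None of this affects the validity of your proof; it just means the appeal to Proposition~\ref{existence-cokernels} can be dropped.
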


\begin{lemma}\label{existence-cokernel}
Let us consider a morphism $< \alpha, f>$ in $\mathsf{Stab}(\mathbb C)$ represented by 
$$\xymatrix{& {\, \, \, \, A'} \ar@{>->}[dl]_{\alpha} \ar[dr]^{f} & \\ A & & B}
 $$
and assume that for any clopen subobject $\xymatrix{{B' \, \, }\ar@{>->}[r] & B}$ the induced morphism $f^{-1} (B') \rightarrow B'$ has a $\mathcal Z$-cokernel in $\mathsf{PreOrd}(\mathbb C)$. Then the cokernel of $<\alpha, f>$ exists in $\mathsf{Stab}(\mathbb C)$, and 
$$\mathsf{coker} (<\alpha, f>) = \Sigma ({\mathcal Z}\mathrm{-}\mathsf{coker}(f)). $$
\end{lemma}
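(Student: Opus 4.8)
The plan is to show that $\Sigma(q)$ is the cokernel of $\langle\alpha,f\rangle$ in $\mathsf{Stab}(\mathbb C)$, where $q\colon B\to Q$ denotes the $\mathcal{Z}$-cokernel of $f$ in $\mathsf{PreOrd}(\mathbb C)$; this exists by the hypothesis applied to the clopen subobject $B$ of itself, which is also what makes the equality $\mathsf{coker}(\langle\alpha,f\rangle)=\Sigma(\mathcal{Z}\text{-}\mathsf{coker}(f))$ meaningful. First I would check that $\Sigma(q)\circ\langle\alpha,f\rangle=0$: composing the partial morphisms $(1_B,q)$ and $(\alpha,f)$ in $\mathsf{PaPreOrd}(\mathbb C)$ produces the representative $(\alpha,q\circ f)$, and $q\circ f$ is trivial by the very definition of $\mathcal{Z}$-cokernel, so $\Sigma(q)\circ\langle\alpha,f\rangle=0$ by Proposition~\ref{zero-morphisms}.

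For the universal property, let $\langle\gamma,g\rangle\colon B\to C$, with $\gamma\colon B'\rightarrowtail B$ a clopen subobject, satisfy $\langle\gamma,g\rangle\circ\langle\alpha,f\rangle=0$. Unwinding the composition rule in $\mathsf{PaPreOrd}(\mathbb C)$ together with Proposition~\ref{zero-morphisms} shows that this is equivalent to $g$ composed with the induced morphism $f^{-1}(B')\to B'$ being a trivial morphism. That induced morphism has, by hypothesis, a $\mathcal{Z}$-cokernel $q_1\colon B'\to Q_1$, and its universal property produces a unique $h\colon Q_1\to C$ in $\mathsf{PreOrd}(\mathbb C)$ with $h\circ q_1=g$. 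Applying Lemma~\ref{pullback-clopen} to $f$ and the clopen subobject $B'$ realizes $Q_1$ as a clopen subobject $\gamma_1\colon Q_1\rightarrowtail Q$ such that $q^{-1}(Q_1)=B'$ and the induced map $B'\to Q_1$ is $q_1$; a direct computation of the composite then gives $\langle\gamma_1,h\rangle\circ\Sigma(q)=\langle\gamma,h\circ q_1\rangle=\langle\gamma,g\rangle$, which is the required factorization through $\Sigma(q)$.

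The delicate part is uniqueness, which I would obtain from the stronger statement that $\Sigma(q)$ is an epimorphism in $\mathsf{Stab}(\mathbb C)$, and which rests on two observations. First, for \emph{every} clopen subobject $\delta\colon D\rightarrowtail Q$ the pullback $q^{-1}(D)\to D$ of $q$ along $\delta$ is again a $\mathcal{Z}$-cokernel in $\mathsf{PreOrd}(\mathbb C)$: applying Lemma~\ref{pullback-clopen} to $f$ and the clopen subobject $q^{-1}(D)\rightarrowtail B$ splits $Q$ as $R\coprod R^c$ with $R=\mathcal{Z}\text{-}\mathsf{coker}$ of the induced morphism $f^{-1}(q^{-1}(D))\to q^{-1}(D)$ and $q^{-1}(R)=q^{-1}(D)$, and since $q$ is a regular epimorphism the pullback functor $q^{-1}$ is order-reflecting on complemented subobjects, forcing $R=D$. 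Second, a $\mathcal{Z}$-cokernel has a regular epimorphism as its underlying arrow (Proposition~\ref{existence-cokernels}), hence is an epimorphism of $\mathsf{PreOrd}(\mathbb C)$, and if $h\circ q'$ is trivial with $q'$ a $\mathcal{Z}$-cokernel then $h$ is trivial (the lemma stated just above). Now given $\langle\delta_1,h_1\rangle,\langle\delta_2,h_2\rangle\colon Q\to C$ with $\langle\delta_1,h_1\rangle\circ\Sigma(q)=\langle\delta_2,h_2\rangle\circ\Sigma(q)$, a congruence diagram witnessing the equality of these composites provides a clopen subobject $B_0\rightarrowtail B$ contained in $q^{-1}(D_1)\cap q^{-1}(D_2)$ on which $h_1$ and $h_2$, pulled back along $q$, agree and off which both are trivial. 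Taking the clopen subobject $Q_0\rightarrowtail Q$ with $q^{-1}(Q_0)=B_0$ furnished by the first observation, one has $Q_0\subseteq D_1\cap D_2$ by order-reflection and $q\colon B_0\to Q_0$ is a $\mathcal{Z}$-cokernel; cancelling this epimorphism yields $h_1|_{Q_0}=h_2|_{Q_0}$, and the cancellation lemma applied to the $\mathcal{Z}$-cokernel $q^{-1}(D_i\cap Q_0^c)\to D_i\cap Q_0^c$ yields that $h_i$ is trivial on the complement $D_i\cap Q_0^c$ of $Q_0$ in $D_i$. This is precisely a congruence diagram establishing $\langle\delta_1,h_1\rangle=\langle\delta_2,h_2\rangle$, so $\Sigma(q)$ is an epimorphism and the factorization above is unique. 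The main obstacle throughout is the bookkeeping of clopen subobjects and their complements under the various pullback functors, which is governed by Lemma~\ref{lemma1_clopen}, Corollary~\ref{corollary1_clopen}, Corollary~\ref{complemented-sub} and the regularity of $\mathbb C$.
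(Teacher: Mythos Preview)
Your argument is correct, and the existence half of the factorisation is essentially identical to the paper's. The genuine difference is in the uniqueness step. The paper fixes the specific factorisation $\langle\gamma,h\rangle$ constructed above, takes an arbitrary second factorisation $\langle\delta,s\rangle$, and writes down a rather elaborate congruence diagram (with eight pullback squares and the identification $Q_1\cong Q_2$ of two auxiliary $\mathcal Z$-cokernels) to show directly that $\langle\gamma,h\rangle=\langle\delta,s\rangle$. You instead isolate the cleaner statement that $\Sigma(q)$ is an epimorphism in $\mathsf{Stab}(\mathbb C)$ and prove that, via the observation that every clopen of $Q$ arises (by order-reflection along the regular epimorphism $q$) as the $\mathcal Z$-cokernel of the restriction of $f$ to its preimage. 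The ingredients are the same---Lemma~\ref{pullback-clopen}, Proposition~\ref{existence-cokernels}, and the cancellation lemma for triviality through a $\mathcal Z$-cokernel---but your packaging is more reusable: once one knows $\Sigma(q)$ is epic, uniqueness is immediate and no comparison with the particular $\langle\gamma,h\rangle$ is needed. One small wording issue: when you write that $Q_0$ is ``furnished by the first observation'', what you actually invoke is Lemma~\ref{pullback-clopen} applied to the clopen $B_0\rightarrowtail B$ (which produces a clopen $Q_0\rightarrowtail Q$ with $q^{-1}(Q_0)=B_0$ and $q|_{B_0}$ a $\mathcal Z$-cokernel); your ``first observation'' as stated runs in the opposite direction. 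This is a minor imprecision, not a gap.
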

\begin{proof}
The assumption implies in particular that the $\mathcal Z$-cokernel of $f$ exists (it suffices to take $B'=B$). Let us then take the $\mathcal Z$-cokernel $q \colon B \rightarrow Q$ of $f$, and prove that $\Sigma (q)$ is the cokernel of $< \alpha, f>$ in $\mathsf{Stab}(\mathbb C)$.
One clearly has that $q \circ < \alpha ,  f> = <\alpha , q  f> = 0$ since $q  f$ is trivial (Proposition \ref{zero-morphisms}).
To check the universal property consider any morphism 
$$\xymatrix{& {\, \, \, B'} \ar@{>->}[dl]_{\beta} \ar[dr]^{g} & \\ B & & C}
 $$
 such that $<\beta, g> \circ <\alpha, f> = 0$.  
We have $<\beta, g> \circ <\alpha, f> = <\alpha \alpha' , g  f' >,$ where $\alpha '$ and $f'$ are defined by the following pullback:
$$
\xymatrix{{A'' \, \, } \ar@{>->}[r]^{\alpha '} \ar[d]_{f '} & {\, \,\,  A'} \ar[d]^{f}  \\
{B' \, \, }\ar@{>->}[r]_{\beta} & B.
}
$$
Let $q' \colon B' \rightarrow Q'$ be the $\mathcal Z$-cokernel of $f'$ in $\mathsf{PreOrd}(\mathbb C)$; the fact that $g f'$ is a trivial morphism implies that  there is a unique morphism $h \colon Q' \rightarrow C$ such that $h  q' = g$. Observe that Lemma \ref{pullback-clopen} implies that the square 
$$
\xymatrix{{B' \, \, } \ar@{>->}[r]^{\beta} \ar[d]_{q'} & {\, \,\,  B} \ar[d]^{q}  \\
{Q' \, \, }\ar@{>->}[r]_{\gamma} &{Q}
}
$$
is a pullback and $\gamma$ is a clopen subobject.
Then $< \gamma, h>$ is a morphism in $\mathsf{Stab}(\mathbb C)$, that is also the required factorization, since
$$<\gamma, h > \circ q = <\beta , h  q'> = <\beta, g>.$$
To prove the uniqueness, let $<\delta, s> \colon Q \rightarrow C$ be another morphism in $\mathsf{Stab}(\mathbb C)$ such that $<\delta, s> \circ q = <\beta, g>$.
This means that there is congruence diagram
\begin{equation}\label{c-diagram-uniqueness}
\xymatrix@=40pt{{B_1 \,\, } \ar@{>->}[rr]^{\beta_1}& &  {\, \,\, \, B''}  \ar@{>->}[ld]_{\beta '} \ar[r]^{q''}  & {Q''} \ar@{>->}[dl]_{\delta}   \ar[dr]^{s} & & \\
{B_0\,\, } \ar@{>->}@/^/@<0.3ex>[rru]^(.4){\beta_1'} \ar@{>->}[r]^{\beta_0} \ar@{>->}@/_/@<-0.3ex>[rrd]^(.4){\beta_2'} &  { B\, \, }   \ar[r]^{q} & Q &   & C & \\
 { B_2 \,\, } \ar@{>->}[rr]_{\beta_2}  & & B' \ar@{>->}[ul]_{\beta}  \ar@/_/@<0.3ex>[urr]_{g}   & & & 
 } \end{equation}
 where the upper quadrangle is a pullback.
 We can then form the commutative diagram
$$
 \xymatrix@=40pt{  {A_2\, \, } \ar@{>->}[r]^{\tilde{\alpha}_2} \ar[d]_{{f}_2 } & {A'' \, \, } \ar@{>->}[r]^{\alpha'} \ar[d]^{f'} & {A'} \ar[d]^f& {\, \,  \tilde{A} } \ar@{>->}[l]_{\tilde{\alpha}}  \ar[d]^{f''}& {\, \, A_1 } \ar@{>->}[l]_{\tilde{\alpha}_1} \ar[d]^{{f}_1}\\ 
 {B_0\, \, } \ar[d]_{q_2}\ar@{>->}[r]^{\beta_2'} & {B'\, \, }   \ar@{>->}[r]^{\beta}  \ar[d]^{q'} & B  \ar[d]^{q} &{\, \, B''} \ar@{>->}[l]_{\beta'}   \ar[d]^{q''} & {\, \, {B_0}} \ar[d]^{q_1}  \ar@{>->}[l]_{\beta_1'} \\
{ Q_2\, \, } \ar@{>->}[r]_{\gamma_2} & {Q'\, \,} \ar@{>->}[r]_{\gamma} & Q & {\, \, Q'' } \ar@{>->}[l]^{\delta} & {\, \, Q_1} \ar@{>->}[l]^{\delta_1}
 }
 $$
 where 
 \begin{itemize}
 \item the lower central left square is a pullback with $\beta$ and $\gamma$ clopen subobjects, $q = {\mathcal Z}$-$\mathsf{coker}(f)$ and $q' = {\mathcal Z}$-$\mathsf{coker}(f')$, as observed above;
 \item the upper central left square is a pullback with $\alpha'$ a clopen subobject (see above);
 \item the upper central right square is a pullback so that $\tilde{\alpha}$ is a clopen subobject;
 \item the lower central right square is a pullback with $\delta$ is a clopen subobject, so that $\beta'$ is a clopen subobject;
 \item by Proposition \ref{existence-cokernels}, $q$ is a regular epimorphism, so that $q''$ is also a regular epimorphism, hence $Q''$ is the regular image (in $\mathbb C$) of the composite $q \beta '$, with the order relation induced by the one on $Q$ (since $\delta$ is a clopen subobject). By comparing this to Lemma \ref{pullback-clopen} one concludes that $q'' =  {\mathcal Z}$-$\mathsf{coker}(f'')$;
 \item the upper right square is a pullback with $\beta_1'$ and hence $\tilde{\alpha}_1$ a clopen subobject;
 \item $q_1$ is defined as ${\mathcal Z}$-$\mathsf{coker}(f_1)$, and by Lemma \ref{pullback-clopen} the lower right square is then a pullback;
 \item the upper left square is a pullback by construction, hence $\tilde{\alpha}_2$ is a clopen subobject;
 \item by construction $q_2$ is the $\mathcal Z$-cokernel of $f_2$ and by Lemma \ref{pullback-clopen} the lower left square is a pullback.
 \end{itemize}
 Now, the two composites in the middle row are equal (they both represent the inclusion of $B_0$ into $B$), hence $q  \beta \beta_2' = q  \beta''  \beta_1'$. It follows that, in $\mathbb C$, they have the same regular image, so that $Q_1 \cong Q_2$. Both these preorders have the same preorder structure (since $\gamma \gamma_2$ and $\delta  \delta_1$ are clopen subobjects). It follows that $Q_1 \cong Q_2$ as objects in $\mathsf{PreOrd}(\mathbb C)$. There is then no restriction in assuming that $\delta \delta_1 = \gamma  \gamma_2$ and $q_1 =q_2$. Since each square in the diagram above is a pullback, one can also assume that $A_1= A_2$, $f_1= f_2$, yielding the following equalities:
$$
 s  \delta_1  q_1  =  s q''  \beta_1' =  g  \beta_2' =  h q'  \beta_2' 
 =  h  \gamma_2  q_2 = h  \gamma_2  q_1.
$$
 Since $q_1$ is an epimorphism, $s  \delta_1 = h  \gamma_2$, and we can form the following diagram
 $$
\xymatrix@=35pt{{ \overline{Q}_1' \,\, } \ar@{>->}[rr]^{{\overline{\gamma}'}} & & {\, \, Q'}  \ar@{>->}[ld]^{\gamma} \ar[dr]^{h}  & & \\
 {Q_1 =Q_2 \,\, } \ar@{>->}[drr]_{\delta_1} \ar@{>->}[rru]^{\gamma_2 } \ar@{>->}[r]_-{\delta  \delta_1 = \gamma  \gamma_2} &Q   & &C \\
 { \overline{Q}_1'' \,\, } \ar@{>->}[rr]_{\overline{\delta}' }  & & {Q''} \ar@{>->}[ul]_{\delta} \ar[ru]_{s} & &
 } $$
 where $\overline{Q}_1'$ is the complement of $Q_1$ in $Q'$ and $\overline{Q}_1''$ the complement of $Q_1$ in $Q''$. We have to prove that $h$ and $s$ are trivial on $\overline{Q}_1'$ and $\overline{Q}_1''$, respectively.
 We first consider $h$ and the pullbacks
 $$
 \xymatrix@=40pt{{B_0 \,\, }\ar@{>->}[r]^{\beta_2'} \ar[d]_{q_2} & B' \ar[d]^{q'} & {\, \, B_2} \ar[d]^{\overline{q}_2}  \ar@{>->}[l]_{\beta_2} \\
 {Q_2\, \, } \ar@{>->}[r]_{\gamma_2} &  Q' & {\, \, \overline{Q_1'}} \ar@{>->}[l]^{\overline{\gamma}'}
 }
 $$
 Now, looking at the diagram \ref{c-diagram-uniqueness} we see that $g$ is trivial on $B_2$. Then the equalities
 $$h {\overline{\gamma}'} \overline{q}_2 = h  q'  \beta_2 = g  \beta_2$$
 show that this morphism is trivial. Since $\overline{q}_2$ is a $\mathcal Z$-cokernel, we conclude that $h  {\overline{\gamma}'} $ is trivial.
 
 On the other hand, in the case of $s$ we consider the pullbacks
 $$
 \xymatrix@=40pt{{B_0 \,\, }\ar@{>->}[r]^{\beta_1'} \ar[d]_{q_1} & B'' \ar[d]^{q''} & {\, \, B_1} \ar[d]^{\overline{q}_1}  \ar@{>->}[l]_{\beta_1} \\
 {Q_1\, \, } \ar@{>->}[r]_{\delta_1} &  Q'' & {\, \, \overline{Q_1''}} \ar@{>->}[l]^{\overline{\delta}'}
 }
 $$
and the equalities
$$s  \overline{\delta}'   \overline{q}_1 = s q''  \beta_1$$
together with the assumption that $s q''$ is trivial on $B_1$ imply that $s  \overline{\delta}'$ is trivial, as desired.
\end{proof}
\begin{corollary}
Let $\mathbb C$ be a $\tau$-pretopos. Then $\mathsf{Stab}(\mathbb C)$ has all cokernels, and 
$$\mathsf{coker}(<\alpha, f>) = \Sigma ({\mathcal Z}\mathrm{-}\mathsf{coker}(f)).$$
\end{corollary}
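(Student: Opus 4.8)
The plan is to deduce the statement directly from Lemma~\ref{existence-cokernel}, the only missing ingredient being the verification of its hypothesis. Recall that $\mathsf{Stab}(\mathbb C)$ is pointed with zero object $0$ by Lemma~\ref{zero}, so it makes sense to speak of cokernels in it. Every morphism of $\mathsf{Stab}(\mathbb C)$ is, by construction, of the form $\langle \alpha, f\rangle$ for some partial morphism $(\alpha,f)\colon A\to B$ in $\mathsf{PaPreOrd}(\mathbb C)$, so it suffices to show that each such $\langle \alpha,f\rangle$ admits a cokernel of the asserted form.

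First I would observe that, since $\mathbb C$ is in particular a finitely complete (hence coherent) category, pullbacks exist in $\mathsf{PreOrd}(\mathbb C)$, so for any clopen subobject $\xymatrix{{B'\,\,}\ar@{>->}[r]&B}$ the inverse image $f^{-1}(B')$ and the induced arrow $f^{-1}(B')\to B'$ are well defined in $\mathsf{PreOrd}(\mathbb C)$. Now comes the key point: because $\mathbb C$ is a $\tau$-pretopos, Proposition~\ref{existence-tau-cok} guarantees that $\mathsf{PreOrd}(\mathbb C)$ has \emph{all} $\mathcal Z$-cokernels. In particular the morphism $f^{-1}(B')\to B'$ has a $\mathcal Z$-cokernel for every clopen $B'\hookrightarrow B$, which is precisely the hypothesis required to apply Lemma~\ref{existence-cokernel} to $\langle\alpha,f\rangle$. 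That lemma then yields that $\mathsf{coker}(\langle\alpha,f\rangle)$ exists and equals $\Sigma(\mathcal Z\text{-}\mathsf{coker}(f))$, where $\mathcal Z\text{-}\mathsf{coker}(f)$ is the $\mathcal Z$-cokernel of $f\colon A'\to B$ (again available by Proposition~\ref{existence-tau-cok}).

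Since this applies to an arbitrary morphism of $\mathsf{Stab}(\mathbb C)$, the category $\mathsf{Stab}(\mathbb C)$ has all cokernels, and the displayed formula holds. The one formal remark worth making is that the right-hand side is independent of the chosen representative $(\alpha,f)$ of the morphism $\langle\alpha,f\rangle$: this is automatic, as cokernels are unique up to a unique isomorphism, so two representatives give canonically isomorphic cokernels. I do not anticipate any genuine obstacle here, since all the real work has been carried out in Lemma~\ref{existence-cokernel} and Proposition~\ref{existence-tau-cok}; the only mild subtlety is simply to notice that the $\tau$-pretopos hypothesis is exactly what upgrades the conditional statement of Lemma~\ref{existence-cokernel} into an unconditional existence result.
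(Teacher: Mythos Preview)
Your proof is correct and follows exactly the same route as the paper: apply Lemma~\ref{existence-cokernel} after using Proposition~\ref{existence-tau-cok} to ensure its hypothesis (existence of the required $\mathcal Z$-cokernels) is met. The paper's proof is a single sentence citing precisely these two results; your version simply spells out the verification in more detail.
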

\begin{proof}
This follows from Lemma \ref{existence-cokernel} and Proposition \ref{existence-tau-cok}.
\end{proof}
\begin{corollary}\label{preservation-cokernels}
Let $\mathbb C$ be a $\tau$-pretopos. Then the functor $\Sigma \colon \mathsf{PreOrd}(\mathbb C) \rightarrow \mathsf{Stab}(\mathbb C)$ sends $\mathcal Z$-cokernels to cokernels.
\end{corollary}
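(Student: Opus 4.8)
The plan is to deduce this directly from Lemma~\ref{existence-cokernel} together with Proposition~\ref{existence-tau-cok} (equivalently, it is the specialisation of the immediately preceding corollary to the case $\alpha = 1_A$). Fix a $\mathcal Z$-cokernel $q \colon B \to Q$ of a morphism $f \colon A \to B$ in $\mathsf{PreOrd}(\mathbb C)$; we must show that $\Sigma(q)$ is a cokernel of $\Sigma(f)$ in $\mathsf{Stab}(\mathbb C)$. Viewing $\Sigma(f) = \langle 1_A, f \rangle$ as a morphism of $\mathsf{Stab}(\mathbb C)$, the first step is to check that it satisfies the hypothesis of Lemma~\ref{existence-cokernel}: for every clopen subobject $B'$ of $B$ the restriction $f^{-1}(B') \to B'$ must admit a $\mathcal Z$-cokernel in $\mathsf{PreOrd}(\mathbb C)$. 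But this is automatic once $\mathbb C$ is a $\tau$-pretopos, since then $\mathsf{PreOrd}(\mathbb C)$ has \emph{all} $\mathcal Z$-cokernels by Proposition~\ref{existence-tau-cok}.

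Consequently Lemma~\ref{existence-cokernel} applies to $\langle 1_A, f\rangle$ and yields $\mathsf{coker}(\Sigma(f)) = \Sigma({\mathcal Z}\text{-}\mathsf{coker}(f))$. Since $\mathcal Z$-cokernels are unique up to a unique compatible isomorphism in $\mathsf{PreOrd}(\mathbb C)$, the chosen $q$ \emph{is} such a ${\mathcal Z}\text{-}\mathsf{coker}(f)$, and because $\Sigma$, being a functor, preserves isomorphisms, we obtain $\mathsf{coker}(\Sigma(f)) \cong \Sigma(q)$ compatibly with the respective universal arrows. This is exactly the assertion that $\Sigma$ sends the $\mathcal Z$-cokernel $q$ of $f$ to a cokernel of $\Sigma(f)$, and since $f$ was an arbitrary morphism of $\mathsf{PreOrd}(\mathbb C)$ with a $\mathcal Z$-cokernel, the statement follows.

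There is no genuine obstacle here: the real work has already been carried out in the proof of Lemma~\ref{existence-cokernel} (the laborious argument manipulating congruence diagrams and repeatedly invoking Lemma~\ref{pullback-clopen}) and in Proposition~\ref{existence-tau-cok}. The only points requiring care are purely formal — that the hypothesis of Lemma~\ref{existence-cokernel} quantifies over \emph{all} clopen subobjects of the codomain (which is precisely what Proposition~\ref{existence-tau-cok} guarantees in a $\tau$-pretopos), and that ``the'' $\mathcal Z$-cokernel is defined only up to canonical isomorphism, so the conclusion is to be read up to this identification.
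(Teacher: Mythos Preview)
Your proposal is correct and follows essentially the same route as the paper: the paper's proof simply says the result follows from the definition of $\Sigma$ and the immediately preceding corollary (which itself combines Lemma~\ref{existence-cokernel} with Proposition~\ref{existence-tau-cok}), and your argument is exactly this specialisation to $\alpha = 1_A$, spelled out in slightly more detail.
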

\begin{proof}
This follows from the definition of the functor $\Sigma$ and the previous Corollary.
\end{proof}
\begin{theorem}\label{main-theo}
Let $\mathbb C$ be a $\tau$-pretopos. Then the functor $\Sigma \colon \mathsf{PreOrd}(\mathbb C) \rightarrow \mathsf{Stab}(\mathbb C)$ sends short $\mathcal Z$-exact sequences to short exact sequences.
\end{theorem}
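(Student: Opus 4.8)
The plan is to deduce the theorem directly from the two preservation results established in this section, so that the proof itself is short. Recall that a short $\mathcal Z$-exact sequence in $\mathsf{PreOrd}(\mathbb C)$ is a pair of composable morphisms $A \xrightarrow{k} B \xrightarrow{q} Q$ in which $k$ is a $\mathcal Z$-kernel of $q$ and $q$ is a $\mathcal Z$-cokernel of $k$; the goal is to prove that the image sequence $\Sigma(A) \xrightarrow{\Sigma(k)} \Sigma(B) \xrightarrow{\Sigma(q)} \Sigma(Q)$ is a short exact sequence in the pointed category $\mathsf{Stab}(\mathbb C)$, i.e.\ that $\Sigma(k)$ is a kernel of $\Sigma(q)$ and $\Sigma(q)$ is a cokernel of $\Sigma(k)$.

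First I would record that, since $q \circ k$ is a $\mathcal Z$-trivial morphism (being the composite of $q$ with its own $\mathcal Z$-kernel), Proposition~\ref{trivial-versus-zero} gives $\Sigma(q) \circ \Sigma(k) = \Sigma(q \circ k) = 0$ in $\mathsf{Stab}(\mathbb C)$, so the image sequence has zero composite. Then I would apply Proposition~\ref{preservation-kernels}: because $k$ is a $\mathcal Z$-kernel of $q$, the morphism $\Sigma(k)$ is a kernel of $\Sigma(q)$. Dually, invoking the hypothesis that $\mathbb C$ is a $\tau$-pretopos so that Corollary~\ref{preservation-cokernels} is available, the fact that $q$ is a $\mathcal Z$-cokernel of $k$ yields that $\Sigma(q)$ is a cokernel of $\Sigma(k)$. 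Combining these two statements is exactly the assertion that $\Sigma(k)$, $\Sigma(q)$ form a short exact sequence in $\mathsf{Stab}(\mathbb C)$; the customary requirement that $\Sigma(k)$ be a monomorphism and $\Sigma(q)$ an epimorphism is then automatic, being respectively a kernel and a cokernel in a pointed category (and the first is also covered by Proposition~\ref{Pres-Mono}).

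I do not anticipate any real obstacle at this point: the genuine work has already been done in the preceding lemmas, most notably in Lemma~\ref{existence-cokernel}, where the cokernel in $\mathsf{Stab}(\mathbb C)$ of a morphism $\langle \alpha, f\rangle$ is exhibited as $\Sigma$ of the $\mathcal Z$-cokernel of $f$, and in Proposition~\ref{existence-tau-cok}, where the $\tau$-pretopos hypothesis guarantees the existence of the $\mathcal Z$-cokernels involved. The only things to keep an eye on while writing it down are that $\mathcal Z$-kernels and $\mathcal Z$-cokernels are carried to genuine kernels and cokernels (rather than merely to monomorphisms and epimorphisms) and that the notion of short exact sequence used in the pointed category $\mathsf{Stab}(\mathbb C)$ really is the combination ``$\Sigma(k)$ is the kernel of $\Sigma(q)$ and $\Sigma(q)$ is the cokernel of $\Sigma(k)$''. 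Both of these are already in hand, so the proof of the theorem should run to only a few lines.
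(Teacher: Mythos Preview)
Your proposal is correct and is essentially the same as the paper's own proof: the theorem is obtained by directly combining Proposition~\ref{preservation-kernels} with Corollary~\ref{preservation-cokernels}. The extra remark about $\Sigma(q)\circ\Sigma(k)=0$ via Proposition~\ref{trivial-versus-zero} is harmless but redundant, since it is already contained in the kernel/cokernel statements.
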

\begin{proof}
This follows immediately from Proposition \ref{preservation-kernels} and Corollary \ref{preservation-cokernels}.
\end{proof}

\end{document}